\documentclass[a4paper,11pt]{amsart}

\usepackage{amsmath, amsthm, amsfonts, amssymb, enumerate}
\usepackage[bookmarks=false]{hyperref}
\numberwithin{equation}{section}

\setcounter{tocdepth}{1}

\setlength{\evensidemargin}{0pt}
\setlength{\oddsidemargin}{0pt}
\setlength{\topmargin}{-10pt}
\setlength{\textheight}{670pt}
\setlength{\textwidth}{450pt}
\setlength{\headsep}{20pt}
\setlength{\footskip}{30pt}
\setlength{\parindent}{0pt}
\setlength{\parskip}{1ex plus 0.5ex minus 0.2ex}

\newtheorem{letterthm}{Theorem}

\newtheorem{thm}{Theorem}[section]
\newtheorem{lem}[thm]{Lemma}

\newtheorem{cor}[thm]{Corollary}
\newtheorem{prop}[thm]{Proposition}

\theoremstyle{definition}
\newtheorem{rem}[thm]{Remark}
\newtheorem*{rem*}{Remark}

\newtheorem{exmp}[thm]{Example}
\newtheorem{ques}{Question}
\newtheorem*{conj}{Conjecture}

\newtheorem*{notation}{Notation}
\newtheorem{defn}[thm]{Definition}

\newtheorem*{defn*}{Definition}

\newcommand{\R}{\mathbf{R}}
\newcommand{\C}{\mathbf{C}}

\newcommand{\N}{\mathbf{N}}

\newcommand{\cC}{\mathcal{C}}
\newcommand{\cH}{\mathcal{H}}
\newcommand{\cK}{\mathcal{K}}
\newcommand{\cL}{\mathcal L}
\newcommand{\cM}{\mathcal{M}}
\newcommand{\cN}{\mathcal{N}}
\newcommand{\cO}{\mathcal{O}}
\newcommand{\cP}{\mathcal{P}}
\newcommand{\cS}{\mathcal{S}}
\newcommand{\cU}{\mathcal{U}}

\newcommand{\cn}{\mathfrak{n}}

\newcommand{\wG}{\widetilde{G}}
\newcommand{\wH}{\widetilde{H}}

\newcommand{\Ad}{\operatorname{Ad}}
\newcommand{\id}{\text{\rm id}}

\newcommand{\Aut}{\operatorname{Aut}}

\newcommand{\supp}{\mathord{\text{\rm supp}}}
\newcommand{\conv}{\mathord{\text{\rm conv}}}
\newcommand{\Tr}{\mathord{\text{\rm Tr}}}

\newcommand{\ot}{\otimes}
\newcommand{\ovt}{\mathbin{\overline{\otimes}}}
\newcommand{\op}{^\text{\rm op}}
\newcommand{\actson}{\curvearrowright}
\newcommand{\ds}{\, \mathord{\text{\rm ds}}}
\newcommand{\dt}{\, \mathord{\text{\rm dt}}}
\renewcommand{\dh}{\, \mathord{\text{\rm dh}}}
\newcommand{\dnu}{\, \mathord{\text{\rm d}}\nu}
\newcommand{\dm}{\, \mathord{\text{\rm dm}}}
\newcommand{\lift}{\mathord{\text{\rm lift}}}

\newcommand{\CQGH}{\C[Q;G,H]}

\newcommand{\VN}{\mathord{\text{\rm vN}}}
\newcommand{\VNQGH}{\VN[Q;G,H]}

\newcommand{\GH}{\lift(G/H)}

\newcommand{\OVW}{operator valued weight}
\newcommand{\PO}{properly outer}
\newcommand{\POR}{properly outer relative}
\newcommand{\TFAT}{The following assertions are true.}
\newcommand{\VNA}{von Neumann algebra}

\begin{document}
\title[Crossed-products by locally compact groups]{Crossed-products by locally compact groups:\\ Intermediate subfactors}
\author{R\'emi Boutonnet \& Arnaud Brothier}
\thanks{The first author was partially supported by a PEPS grant from INSMI and the second author was partially supported by NSF Grant DMS-1362138}

\address{R\'emi Boutonnet\\Institut de Math\'ematiques de Bordeaux, Universit\'e de Bordeaux, 351 cours de la Lib\'eration, 33 405 Talence Cedex, France}
\email{remi.boutonnet@math.u-bordeaux.fr}

\address{Arnaud Brothier\\Department of Mathematics, University of Rome Tor Vergata, Via della Ricerca Scientifica 00133 Roma, Italy}
\email{arnaud.brothier@gmail.com\endgraf
https://sites.google.com/site/arnaudbrothier/}

\begin{abstract}
We study actions of locally compact groups on von Neumann factors and the associated crossed-product von Neumann algebras. In the setting of totally disconnected groups we provide sufficient conditions on an action $G \actson Q$ ensuring that the inclusion $Q \subset Q \rtimes G$ is irreducible and that every intermediate subfactor is of the form $Q \rtimes H$ for a closed subgroup $H<G$. This partially generalizes a result of Izumi-Longo-Popa \cite{ILP98} and Choda \cite{Ch78}. We moreover show that one can not hope to use their strategy for non-discrete groups.\end{abstract}

\maketitle

\section{Introduction}

In the theory of von Neumann algebras, the {\it crossed-product construction} associates to an action of a locally compact group $G$ on  a von Neumann algebra $Q$ a new von Neumann algebra, denoted by $Q \rtimes G$, which encodes the action (to some extent). This construction goes back to Murray and von Neumann \cite{MvN36} in the case of state preserving actions of countable groups on abelian von Neumann algebras, and was called the {\it group measure-space construction}. Thus crossed-product algebras appear as one of the most basic examples of von Neumann algebras.
In the case of actions of discrete groups, elementary properties of these crossed-product algebras are quite well understood. For instance:
\begin{enumerate}[(A)]
\item If $Q$ is abelian, $Q = L^\infty(X,\mu)$, and if the corresponding non-singular $G$-action on $(X,\mu)$ is essentially free then $Q' \cap (Q \rtimes G) \subset Q$, so that the crossed-product $Q \rtimes G$ is a factor if and only if the action is ergodic. 
\item If $Q$ is a factor, then $Q' \cap (Q \rtimes G) = \C$ if and only if the action is properly outer, meaning that for all $g \in G$, the corresponding automorphism of $Q$ is not inner. 
\end{enumerate}
Moreover, in both settings one can completely describe all the intermediate subalgebras between $Q$ and $Q \rtimes G$.
Hybrid cases combining aspects of cases (A) and (B) above have been considered recently in \cite{CS16}

In the case of non-discrete groups the picture is not as nice. The main difference is that there is no ``Fourier decomposition'' of elements $Q \rtimes G$. Namely, not every element of $x \in Q \rtimes G$ can be represented as an $L^2$-element $x \in L^2(Q) \ot L^2(G)$. 

Nevertheless, in the setting (A) above of actions on abelian algebras there are some satisfying results. Sauvageot showed in \cite[Section 2]{Sa77} that the equivalence between $Q' \cap (Q \rtimes G) \subset Q$ and the action $G \actson (X,\mu)$ being essentially free still holds. Moreover, in the case of state preserving actions of unimodular groups, a powerful tool is available: the so-called {\it crossed-section equivalence relation}. It relies on the observation that appropriate ${\rm II}_1$ corners of $Q \rtimes G$ can be described by an explicit equivalence relation. We refer to \cite{KPV14} for details and references.

In this article we will be interested in the setting (B) of actions on factors. An action $G \actson Q$ of a locally compact group $G$ on a factor $Q$ is called {\it strictly outer} if $Q' \cap (Q \rtimes G)=\C$. It is known in this case that properly outer actions need not be strictly outer. As we will see, assuming that the action is strictly outer allows to deduce more conclusions. For instance we will see that it implies that the normalizer of $Q$ inside $Q \rtimes G$ is the semi-direct product $\cU(Q) \rtimes G$, see Corollary \ref{normalizer}. In particular we will derive as in the discrete case that for strictly outer actions the pairs $Q \subset Q \rtimes G$ completely characterize the actions up to cocycle conjugacy. We refer to \cite[Theorem 5.1]{V05} for examples of strictly outer actions. We will also give a new criterion providing more examples in Proposition \ref{factoriality}.

Our main goal is to prove an intermediate subfactor theorem. Namely we will provide examples of strictly outer actions of non-discrete groups that satisfy the following property.

\begin{defn*}
We will say that a strictly outer action $G \actson Q$ of a locally compact group on a factor $Q$ satisfies the {\it Intermediate Subfactor Property} if any subfactor of $Q \rtimes G$, containing $Q$ is of the form $Q \rtimes H$ for some closed subgroup $H < G$.
\end{defn*}

In the case of outer actions of discrete groups, Choda \cite{Ch78} proved such an intermediate subfactor property under the extra assumption that the intermediate subfactor is the range of a normal conditional expectation on $Q \rtimes G$. Then Izumi-Longo-Popa \cite{ILP98} were able to show that the existence of such a conditional expectation is automatic for crossed-products by discrete groups. So the result is known for discrete groups. Unfortunately, as we discuss below, there is no hope to adapt this strategy for general locally compact groups. In fact, the general situation is analytically much harder to handle precisely because conditional expectations need not exist in general.

Our main result relies on a very different approach inspired from ${\rm II}_1$-factor techniques. We will restrict our attention to actions of totally disconnected groups. This will allow us to use Fourier decomposition arguments. Here is the statement.

\begin{letterthm}\label{intermediate subfactor}
Consider an action $\sigma: G \actson Q$ of a totally disconnected locally compact group $G$ on a semi-finite factor $Q$. Assume that $\sigma$ is properly outer relative to a compact open subgroup $K_0 < G$ whose action is minimal. 

Then $\sigma$ satisfies the Intermediate Subfactor Property.
\end{letterthm}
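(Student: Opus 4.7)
Given an intermediate subfactor $Q \subset M \subset Q \rtimes G$, set $H := \{g \in G : \lambda_g \in M\}$. Since $\lambda_g \lambda_h = \lambda_{gh}$ and $\lambda_g^* = \lambda_{g^{-1}}$, $H$ is a subgroup, and it is closed because $g \mapsto \lambda_g$ is strongly continuous and $M$ is weakly closed. The inclusion $Q \rtimes H \subseteq M$ is then automatic, so the theorem reduces to proving $M \subseteq Q \rtimes H$.

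To do this, I exploit the total disconnectedness of $G$ through a Fourier-type decomposition adapted to the compact open subgroup $K_0$. Openness of $K_0$ makes $G/K_0$ discrete; this is precisely what allows a normal conditional expectation $E_{K_0} \colon Q \rtimes G \to Q \rtimes K_0$ (integration over $K_0$-cosets), together with the fact that $L^2$ of the semi-finite trace decomposes as $\bigoplus_{\alpha \in G/K_0} L^2(Q\rtimes K_0) \lambda_{g_\alpha}$ for any choice of coset representatives $(g_\alpha)$. Consequently, every $x \in Q \rtimes G$ has a generalized Fourier expansion $x = \sum_\alpha x_\alpha \lambda_{g_\alpha}$ with $x_\alpha := E_{K_0}(x \lambda_{g_\alpha}^*) \in Q \rtimes K_0$, converging in the appropriate $L^2$/strong sense.

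The argument then splits in two stages. Stage one is the analysis of the ``diagonal coefficient'' space $M \cap (Q \rtimes K_0)$. Since $K_0$ is profinite, write $K_0 = \varprojlim K_0/N$ over a basis of open normal subgroups of finite index; minimality and proper outerness of the $K_0$-action (restricted to each $N$-fixed subalgebra, where $K_0/N$ is a finite group acting outerly) allow applying the classical Izumi--Longo--Popa intermediate subfactor theorem at each finite level, and passing to the inverse limit gives $M \cap (Q \rtimes K_0) = Q \rtimes (H \cap K_0)$. Stage two is the crucial rigidity step: for each coset $\alpha$, show that whenever some nonzero $a \lambda_{g_\alpha}$ (with $a \in Q \rtimes K_0$) lies in $M$, in fact $g_\alpha \in H K_0$. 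Via polar decomposition such an element yields a partial isometry in $M$ whose conjugation action on $Q$ coincides with $\sigma_{g_\alpha}$ up to an inner twist lying inside $Q \rtimes K_0$; proper outerness of $\sigma$ relative to $K_0$ rules this out unless $g_\alpha \in H K_0$, and minimality of the $K_0$-action pins down the resulting twist. Assembling: each Fourier coefficient $x_\alpha$ of $x \in M$ either vanishes or is supported in $Q \rtimes (H \cap K_0)$ with $g_\alpha \in H K_0$, so the sum recombines to an element of $Q \rtimes H$.

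The main obstacle is the rigidity step in stage two. In the discrete setting, Fourier coefficients live in $Q$ itself and outerness directly pins down the group element; here the coefficients live in the nontrivial algebra $Q \rtimes K_0$, so the automorphism produced by conjugation has to be disentangled simultaneously from an inner $Q \rtimes K_0$-piece and from the $K_0$-action, which is precisely where the \emph{combination} of proper outerness relative to $K_0$ and minimality of the $K_0$-action on $Q$ is essential. A secondary analytic difficulty is that, in the absence of a conditional expectation onto $Q$, the Fourier expansion is intrinsically only an $L^2$-decomposition; controlling the support of individual coefficients of elements of $M$ (rather than just their $L^2$-norms) is what forces the use of II$_1$-style normalizer arguments applied to the inclusion $Q \rtimes K_0 \subset Q \rtimes G$.
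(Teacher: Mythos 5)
Your setup (defining $H=\{g: u_g\in M\}$ and aiming to show $M\subset Q\rtimes H$ via a coset decomposition over a compact open subgroup) matches the paper's, but the core of your plan has a genuine gap. In stage two you propose to analyze elements of the form $a\lambda_{g_\alpha}$ \emph{that lie in} $M$, and then in the assembly step you apply this to the Fourier coefficients $x_\alpha\lambda_{g_\alpha}$ of an arbitrary $x\in M$. But those coefficients are not known to lie in $M$: there is in general no normal conditional expectation from $Q\rtimes G$ onto $M$ (indeed the paper's Theorem B shows such expectations fail to exist for $Q\rtimes H$ whenever $H$ is not open), so the map $x\mapsto x_\alpha\lambda_{g_\alpha}$ does not preserve $M$. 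This is exactly the obstruction that distinguishes the non-discrete case from Choda/ILP, and your proposal does not supply a mechanism to cross it. The paper's solution is a Popa-style averaging argument: starting from the mere non-vanishing of $qp_KE_K(u_g^*x)qp_K$ for some finite-trace projection $q\in Q^K$, one minimizes the dual-weight norm over the ultraweakly closed convex hull of $\{\sigma_g(u)yu^*: u\in\mathcal U(qQ^Kq)\}$ (which \emph{does} stay inside $N$ since $Q\subset N$), uses the relative commutant computation $(Q^K)'\cap (Q\rtimes G)=LK$ (which is where relative proper outerness and minimality enter, via Lemma \ref{relcomK1} and Lemma \ref{relcomK2}), and a second averaging plus polar decomposition to produce an honest contraction $z\in N$ with $u_gqp_K=zp_K$. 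Your "inner twist in $Q\rtimes K_0$ ruled out by relative proper outerness" sketch presupposes an element of $M$ to take the polar decomposition of, which is the thing that must be constructed.

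Two further points would need repair even granting the rigidity step. First, working with the single fixed subgroup $K_0$ can only localize the support of $x$ to $HK_0$ (a union of cosets, not a group); the paper must run the argument for arbitrarily small compact open $K<K_0$ and arbitrarily small normal $L<K$, extract limits of the resulting elements $u_{gh_L}p_L\in Np_L$ by compactness of $K$ and of $(N)_1$, and use closedness of $H$ to conclude $g\in H$. Second, your final "recombination" of an $L^2$-convergent Fourier series whose terms lie in $Q\rtimes H$ does not by itself give membership in the von Neumann algebra $Q\rtimes H$ (partial sums are not operator-norm bounded); the paper instead proves $\supp(x)\subset H$ and invokes Theorem \ref{sgpthm} (support contained in a closed subgroup implies membership in $Q\rtimes H$), which is a separate nontrivial ingredient absent from your plan. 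Your stage one ($M\cap(Q\rtimes K_0)=Q\rtimes(H\cap K_0)$ via finite quotients of $K_0$) is also shakier than it looks, since the minimal $K_0$-action need not factor through finite quotients and $M\cap(Q\rtimes K_0)$ need not be a factor, but this stage is in any case not needed once the averaging argument is in place.
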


We refer to Section \ref{outact} for the definition of a minimal action and to Definition \ref{POR} for the notion of relatively properly outer action. Theorem \ref{intermediate subfactor} applies to Bernoulli shifts $G \actson (\ovt_{G/K} \, Q_0)$ where $Q_0$ is an arbitrary ${\rm II}_1$-factor, and $K$ is any compact open subgroup of $G$, see Section \ref{section:examples}. For instance if $G$ is a closed subgroup of automorphisms of a locally finite tree $T$, and if $G$ acts transitively on $T$, then the Bernoulli shift action $G \actson (\ovt_T \, Q_0)$ satisfies the Intermediate Subfactor Property. We refer to Section \ref{section:examples} for more examples.

As we mentioned above this result is not a simple generalization of the discrete case, even if $Q$ is tracial, because in general an intermediate subalgebra $Q \subset N \subset Q \rtimes G$ does not behave well from the Hilbert theory perspective. Nevertheless we still manage to use Hilbert techniques to perform the proof.

Our approach relies on two ingredients. The first one is an averaging argument. This is where the semi-finiteness assumption on $Q$ will be used. As we will see in Remark \ref{large centralizer}, this technique also allows to deal with some actions on type ${\rm III}$ factors $Q$, but we need to make an assumption that there is a $G$-invariant state on $Q$ that has a {\it large centralizer}.

Our second ingredient is an extension of the notion of {\it support} defined by Eymard \cite{E64}. In the language of quantum groups, the support of an element of $Q \rtimes G$ is {\it the spectrum of the dual action}. In the setting of totally disconnected groups, this notion is particularly well suited, see for instance the proof of Proposition \ref{factoriality}. The key fact that we will use in the proof of Theorem \ref{intermediate subfactor} is that an element whose support is contained in some closed subgroup $H < G$ actually belongs to the subalgebra $Q \rtimes H \subset Q \rtimes G$. This result is certainly known to experts in quantum groups, but as we were not able to find an explicit reference, we will provide a self contained proof in Section \ref{section support}.

In view of Theorem \ref{intermediate subfactor} and Remark \ref{large centralizer} we make the following general conjecture.

\begin{conj}
Any strictly outer action $G \actson Q$ on a factor (of any type) satisfies the Intermediate Subfactor Property.
\end{conj}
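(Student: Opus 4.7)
The plan is to try to relax all three hypotheses of Theorem~\ref{intermediate subfactor}: the totally disconnected condition on $G$, the semi-finiteness of $Q$, and the relative proper outerness plus minimality assumption, keeping only strict outerness.

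First I would try to reduce the general type case to the semi-finite one via the continuous core construction. Fix a faithful normal semi-finite weight $\varphi$ on $Q$ and form $\widetilde Q = Q \rtimes_{\sigma^\varphi} \R$. The action $\sigma$ extends canonically to an action $\widetilde\sigma$ of $G$ on $\widetilde Q$ commuting with the dual $\R$-action, and Takesaki duality identifies $\widetilde Q \rtimes G$ with $(Q \rtimes G) \rtimes \R$. Under this identification intermediate subfactors $Q \subset N \subset Q \rtimes G$ should correspond bijectively to $\R$-invariant intermediate subfactors $\widetilde Q \subset \widetilde N \subset \widetilde Q \rtimes G$. One then needs to check that strict outerness of $\sigma$ passes to strict outerness of $\widetilde\sigma$ on the semi-finite factor $\widetilde Q$; this looks like the routine part, morally close to Remark~\ref{large centralizer} mentioned in the introduction.

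Second, to drop the totally disconnected hypothesis, I would replace the concrete support theory of Section~\ref{section support} by the Arveson spectrum of the dual coaction $\widehat\sigma : Q \rtimes G \to (Q \rtimes G) \ovt \cL(G)$. Given an intermediate subfactor $N$, define $H \subset G$ as the Arveson spectrum of $N$ under $\widehat\sigma$, then try to show, using the algebra structure of $N$ together with strict outerness of $\sigma$, that $H$ is a closed subgroup and that $N = Q \rtimes H$. A possible intermediate reduction is to treat the connected and totally disconnected parts of $G$ separately via the short exact sequence $1 \to G^0 \to G \to G/G^0 \to 1$ and a Mackey-style inductive argument, although combining the two is itself nontrivial because the extension need not split.

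The main obstacle is the last recovery step $N = Q \rtimes H$. For totally disconnected $G$, compact open subgroups provide clean slice maps sending an element with support in $H$ back into $Q \rtimes H$, which is precisely the content of the key lemma of Section~\ref{section support}. For connected $G$ there is no such slicing, and one needs a spectral-synthesis statement: any element of $Q \rtimes G$ whose dual-coaction spectrum lies in a closed subgroup $H$ must lie in $Q \rtimes H$. This is a genuine analytic obstruction. As the authors emphasize, the Izumi--Longo--Popa route via existence of normal conditional expectations onto intermediate subfactors is unavailable beyond the discrete setting, so without a substitute spectral-synthesis tool or a new Hilbert-module argument exploiting strict outerness more directly, I do not see how to close the proof in full generality — which is, of course, why this is stated as a conjecture rather than as a theorem.
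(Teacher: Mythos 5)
This statement is the paper's open \emph{Conjecture}: it is proved nowhere in the text, and you rightly refrain from claiming a proof, so there is no argument of the authors to compare yours against beyond the partial results (Theorem \ref{intermediate subfactor}, Theorem \ref{theo:Heckepair}, and the remark on compact groups). Your honest conclusion that the argument cannot be closed is the correct one. However, you misplace the analytic obstruction. The ``spectral synthesis'' statement you single out as the missing ingredient --- that an element of $Q \rtimes G$ whose dual-coaction spectrum lies in a closed subgroup $H$ must belong to $Q \rtimes H$ --- is exactly Theorem \ref{sgpthm}, and it is proved there for \emph{arbitrary} locally compact $G$ and arbitrary closed $H$; no total disconnectedness is needed (the proof goes through commutation with $\cL^\infty(H\backslash G)$ and Theorem \ref{commutant subgroup}). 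What genuinely breaks down for connected groups is the step \emph{before} that: showing that every element of the intermediate subfactor $N$ has support inside $H = \{g \, : \, u_g \in N\}$. In the proof of Theorem \ref{intermediate subfactor} this is achieved by extracting Fourier coefficients $E_K(u_g^*x)$ with respect to compact open subgroups $K$, identifying the corner $p_K(Q \rtimes K)p_K$ with $Q^K p_K$ (Lemma \ref{corner}), and running an averaging argument over $\cU(qQ^Kq)$ whose fixed points are controlled by Proposition \ref{relcomK}. None of these tools has an analogue when $G$ admits no compact open subgroups, and that --- not the recovery step $\supp(x) \subset H \Rightarrow x \in Q \rtimes H$ --- is where a new idea is required.

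Your proposed reduction to the semi-finite case via the continuous core is also not the ``routine part.'' The canonical extension of a strictly outer action to $\widetilde Q = Q \rtimes_{\sigma^\varphi} \R$ need not remain strictly outer (modular automorphisms, for instance, become inner on the core), and, more seriously, the correspondence you want between intermediate subfactors $Q \subset N \subset Q \rtimes G$ and $\R$-invariant intermediate subalgebras of the core requires $N$ to be globally invariant under the modular flow of a dual weight --- essentially the existence of a normal conditional expectation onto $N$, which by Theorem \ref{ovwsg} is exactly what fails for non-open subgroups. So this reduction begs the question. This limitation is consistent with Remark \ref{large centralizer}, where the type ${\rm III}$ cases that can be handled are precisely those admitting a $G$-invariant state with large centralizer, and with the paper's open Question about type ${\rm III}_0$ factors.
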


\begin{rem*}
The conjecture holds in full generality for compact groups. Indeed, if $K$ is a compact group and $K \actson Q$ is a strictly outer action on an arbitrary von Neumann algebra, then the pair $Q \subset Q \rtimes K$ can be identified with the basic construction of the inclusion $Q^K \subset Q$. This result was showed more generally for {\it integrable} (strictly outer) actions of locally compact groups by Vaes, \cite[Theorem 5.3]{V01}. Moreover, by \cite[Theorem 3.15]{ILP98}, every intermediate subfactor of the inclusion $Q^K \subset Q$ is of the form $Q^L$ form some closed subgroup $L < K$. Combining these two facts indeed yells that $K \actson Q$ satisfies the Intermediate Subfactor Property.
\end{rem*}

Going back to general (not necessarily compact) groups, can one prove the conjecture even when $Q$ is tracial and $G$ is an arbitrary locally compact group?
As we mentioned, Remark \ref{large centralizer} allows to produce actions on type ${\rm III}$ factors. But the condition on large centralizers that we need is never fulfilled in the case of actions on factors of type ${\rm III}_0$. This raises the following question.

\begin{ques}
Can one provide an explicit example of a strictly outer action (of a non-discrete group) on a type ${\rm III}_0$-factor that satisfies the Intermediate Subfactor Property?
\end{ques}

We discuss in Section \ref{sectionCEOVW} the (im)possibility to solve the above conjecture by generalizing the work of Izumi-Longo-Popa. Namely, we completely characterize in the case of arbitrary strictly outer actions $G \actson Q$, existence of normal conditional expectation and operator valued weight for an inclusion $Q \rtimes H \subset Q \rtimes G$, where $H$ is a closed subgroup of $G$.

\begin{letterthm}\label{ovwsg}
Consider a strictly outer action $G \actson Q$ on an arbitrary von Neumann algebra $Q$ and take a closed subgroup $H < G$. Then we have the following characterizations.
\begin{enumerate}
\item There exists a normal faithful semi-finite operator valued weight $T \in \cP(Q \rtimes G,Q \rtimes H)$ if and only if the modular functions $\delta_G$ and $\delta_H$ coincide on $H$.
\item The inclusion $Q \rtimes H \subset Q \rtimes G$ is with expectation if and only if $H$ is open inside $G$.
\end{enumerate}
\end{letterthm}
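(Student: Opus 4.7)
The plan is to reduce both parts to Haagerup's and Takesaki's modular-theoretic characterizations of the existence of operator valued weights and conditional expectations, and to exploit strict outerness through a Connes cocycle argument. Fix a normal faithful semi-finite weight $\phi$ on $Q$, and let $\tilde{\phi}_G$, $\tilde{\phi}_H$ be its dual weights on $Q \rtimes G$, $Q \rtimes H$. The standard formula for the modular group of a dual weight,
\[\sigma_t^{\tilde{\phi}_G}(u_g) = \delta_G(g)^{-it} [D(\phi \circ \sigma_g^{-1}) : D\phi]_t \, u_g, \qquad g \in G,\]
together with $\sigma_t^{\tilde{\phi}_G}|_Q = \sigma_t^\phi$, will show that $\sigma_t^{\tilde{\phi}_G}$ preserves $Q \rtimes H$ (the Connes cocycle lies in $Q$) and that on this subalgebra $\sigma_t^{\tilde{\phi}_G}|_{Q \rtimes H} = \alpha_t \circ \sigma_t^{\tilde{\phi}_H}$, where $\alpha_t \in \Aut(Q \rtimes H)$ fixes $Q$ pointwise and sends $u_h \mapsto (\delta_G(h)/\delta_H(h))^{-it} u_h$.

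For part (1), if $\delta_G|_H = \delta_H$ then $\alpha_t = \id$ and the two modular flows coincide on $Q \rtimes H$, so Haagerup's theorem produces the desired OVW $T$ (one can also write $T$ down explicitly using the $G$-invariant measure on $G/H$, whose existence is precisely the modular function condition). Conversely, given such a $T$, set $\psi := \tilde{\phi}_H \circ T$; Haagerup's theorem then yields $\sigma_t^\psi|_{Q \rtimes H} = \sigma_t^{\tilde{\phi}_H}$, so the Connes cocycle $v_t := [D\psi : D\tilde{\phi}_G]_t$ satisfies $\alpha_t(z) = v_t^* z v_t$ for every $z \in Q \rtimes H$. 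Testing with $z \in Q$ (where $\alpha_t = \id$) forces $v_t \in Q' \cap (Q \rtimes G) = \C$ by strict outerness, whence $\alpha_t = \id$ and $\delta_G|_H = \delta_H$.

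For part (2), the ``if'' direction is constructive: when $H$ is open, $\delta_G|_H = \delta_H$ is automatic and $\chi_H \in L^\infty(G)$ becomes a genuine projection $e_H$ in the commutant of $Q \rtimes G$ acting on $L^2(G, L^2(Q))$; the compression $E(x) := e_H x e_H$ is then a normal conditional expectation onto $e_H(Q \rtimes G) e_H \cong Q \rtimes H$. For the converse, suppose $E$ exists. By (1) we already have $\delta_G|_H = \delta_H$. Set $\phi_0 := \tilde{\phi}_H \circ E$; Takesaki's theorem ensures $\phi_0$ is a normal faithful semi-finite weight on $Q \rtimes G$ with $\phi_0|_{Q \rtimes H} = \tilde{\phi}_H$ and $\sigma_t^{\phi_0}|_{Q \rtimes H} = \sigma_t^{\tilde{\phi}_H} = \sigma_t^{\tilde{\phi}_G}|_{Q \rtimes H}$ (the last equality because $\alpha_t = \id$). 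The Connes cocycle $v_t := [D\phi_0 : D\tilde{\phi}_G]_t$ therefore commutes pointwise with $Q \rtimes H$, and strict outerness again gives $v_t \in \C$. It follows that $\phi_0 = \lambda \tilde{\phi}_G$ for some $\lambda > 0$, hence $\tilde{\phi}_G|_{Q \rtimes H} = \lambda^{-1} \tilde{\phi}_H$ is semi-finite. A direct inspection of the dual weight formula will then show that this semifiniteness forces the Haar measure of $G$ to restrict to a non-zero measure on $H$, equivalently that $H$ is open in $G$.

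I expect the main technical obstacle to be the careful handling of dual weights in the non-tracial, non-invariant setting (where the Connes cocycles $[D(\phi \circ \sigma_g^{-1}) : D\phi]_t$ genuinely appear in the modular formulas) and the final measure-theoretic step linking semifiniteness of $\tilde{\phi}_G|_{Q \rtimes H}$ to openness of $H$; everything else is a by-now standard combination of dual weight calculus and the Connes cocycle machinery, anchored at the crucial point where strict outerness collapses $Q' \cap (Q \rtimes G)$ to $\C$.
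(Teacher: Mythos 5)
Your converse to part (1) is correct and takes a genuinely different, more economical route than the paper's. The paper passes to the basic construction $(\cL^\infty(G/H)\ovt Q)\rtimes G$, produces a Connes cocycle with values in $\cL^\infty(G/H)$ (strict outerness entering through $Q'\cap\tilde M=\cL^\infty(G/H)$), integrates it to a $G$-invariant Radon measure on $G/H$, and invokes the classical existence criterion for such measures. You instead stay inside $Q\rtimes G$: from $T$ you form $\psi=\tilde\phi_H\circ T$, whose modular flow restricts on $Q\rtimes H$ to that of $\tilde\phi_H$; comparing with $\sigma^{\tilde\phi_G}$ via the dual-weight formula, the cocycle $(D\psi:D\tilde\phi_G)_t$ implements $\alpha_t$ on $Q\rtimes H$, hence lies in $Q'\cap(Q\rtimes G)=\C$, forcing $\alpha_t=\id$ and $\delta_G|_H=\delta_H$. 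Both arguments use strict outerness to kill a relative commutant, but yours avoids the basic construction and the invariant-measure detour entirely. (One slip in the ``if'' direction of part (2): the projection $e_H=P(H)$ lies in the commutant of $Q\rtimes H$, not of $Q\rtimes G$ --- that is precisely what makes the compression $Q\rtimes H$-bimodular and non-trivial; the construction itself works and is equivalent to the paper's multiplier $m_{\chi_H}$.)

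The genuine gap is the final step of part (2). Having correctly reduced to showing that semi-finiteness of $\tilde\phi_G|_{Q\rtimes H}$ forces $m_G(H)>0$ (hence $H$ open, by Steinhaus), you dispose of this with ``a direct inspection of the dual weight formula''. But there is no formula for $\tilde\phi_G$ on general positive elements of $Q\rtimes H$: the dual weight is only computable on $\cK(G,Q)^*\cK(G,Q)$, and when $m_G(H)=0$ no nonzero element of $Q\rtimes H$ is of that form. What is actually required is that every nonzero $x\in\cn_{\tilde\phi_G}(Q\rtimes G)$ has support of positive Haar measure (Lemma \ref{posupp}); applied to $x\in\cn_{\tilde\phi_G}\cap(Q\rtimes H)$, whose support lies in the null set $H$ by Theorem \ref{sgpthm}, this gives $x=0$ and the contradiction. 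Proving that statement means identifying $\cn_{\tilde\phi_G}$ with the left-bounded vectors of Haagerup's left Hilbert algebra and showing that the function-support of such a vector $\xi$ is contained in $\supp(\pi_\ell(\xi))$, via an approximation by $\pi_r(\check f\eta)\xi$ with a continuity estimate for $\rho_G$ --- this occupies a full page of the paper and is the analytic heart of part (2). You flagged it as the main obstacle but did not supply it. (The paper's own endgame is slightly different --- it shows the canonical operator valued weight $T$ with $\Psi_H\circ T=\Psi_G$ has $T(1)=\infty$ and concludes by irreducibility via Haagerup's theorem --- but it hinges on exactly the same lemma.)
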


For both parts in the above theorem, the only if parts follow easily from modular theory. Namely both conditions are easily seen to imply the existence of an operator valued weight and conditional expectation, respectively. Our main contribution is to show that they are actually necessary. 

Note that that for non-strictly outer actions there can exist conditional expectations although $H$ is not open inside $G$. For instance, consider a product $G = G_1 \times G_2$, with the trivial action $G \actson \C$. Assume that $G_2$ is second countable and not discrete. Then $G_1$ is not open inside $G$ but any faithful normal state on $LG_2$ gives rise to a conditional expectation from $LG \simeq LG_1 \ovt LG_2$ onto $LG_1$.

On the other hand we do not know whether the above characterization regarding existence of normal faithful semi-finite operator valued weights holds for arbitrary actions (not necessarily strictly outer).

Next, we will show that nevertheless the strategy of Izumi-Longo-Popa and Choda can be applied to some intermediate subfactors, yielding the following result. We will also mention applications to crossed-products by Hecke pairs of groups, see Corollary \ref{cor:Hecke}.

\begin{letterthm}\label{theo:Heckepair}
Consider an arbitrary strictly outer action $G \actson Q$ and a compact open subgroup $K < G$. Then any subfactor of $Q \rtimes G$ containing $Q \rtimes K$ is of the form $Q \rtimes H$ for some intermediate subgroup $H$; $K < H < G$.
\end{letterthm}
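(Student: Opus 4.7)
The plan is to adapt the Izumi-Longo-Popa and Choda intermediate-subfactor strategy, leveraging the Hilbert-space control afforded by the compact open subgroup $K$. Set $H := \{g \in G : u_g \in N\}$; since $g \mapsto u_g$ is SOT-continuous and $N$ is SOT-closed, $H$ is a closed subgroup of $G$, and it contains $K$ because $Q \rtimes K \subset N$. Hence $Q \rtimes H \subset N$, and it remains to prove the reverse inclusion.

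Normalize the Haar measure on $K$ so that $|K|=1$; then $e_K := \int_K u_k\, dk$ is a projection in $L(K) \subset Q \rtimes K \subset N$ serving as the Jones projection of the inclusion $Q \rtimes K \subset Q \rtimes G$: it satisfies $e_K y e_K = E_K(y) e_K$ for every $y \in Q \rtimes G$, where $E_K$ is the normal faithful conditional expectation provided by Theorem \ref{ovwsg}(2). For each coset $gK \in G/K$ the projection $p_{gK} := u_g e_K u_g^*$ depends only on the coset, and the family $\{p_{gK}\}$ is pairwise orthogonal and sums strongly to $1$ in $B(L^2(Q \rtimes G))$, yielding an $L^2$-Fourier expansion $x = \sum_{gK} u_g\, x_{gK}$ for any $x \in Q \rtimes G$, where $x_{gK} := E_K(u_g^* x) \in Q \rtimes K$. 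By the support result of Section \ref{section support}, $N \subset Q \rtimes H$ will follow once we show $\supp(x) \subset H$ for every $x \in N$. Since $K$ is open, $\supp(x)$ is a union of $K$-cosets, and one checks that $gK \subset \supp(x)$ exactly when $x_{gK} \neq 0$. We are thus reduced to proving: \emph{if $x \in N$ and $a := E_K(u_g^* x) \neq 0$, then $u_g \in N$.}

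This last step is the heart of the proof, and here the ILP-Choda bimodule strategy enters. Note that $a \in (Q \rtimes K) \cap N$, and let $B$ denote the $\sigma$-weakly closed $(Q \rtimes K)$-sub-bimodule of $N$ generated by $x$. By the Fourier expansion, $B$ has nonzero component on the double-coset slab $KgK$, which is a finite union of left $K$-cosets since $K$ is compact open (the Hecke-pair property of $(G,K)$). The strict outerness of $\sigma$ implies on the one hand that $N$ is a factor (because $Q' \cap N \subset Q' \cap (Q \rtimes G) = \C$), and on the other that $\sigma_g$ is a properly outer automorphism of $Q$; combining the latter with the factoriality of $Q \rtimes K$ we deduce the irreducibility of the $(Q \rtimes K)$-bimodule $(Q \rtimes K) u_g (Q \rtimes K) \subset Q \rtimes G$. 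A polar-decomposition and bimodule-density argument, executed using the Jones projection $e_K \in N$ to truncate Fourier supports to the slab $KgK$, then forces $B$ to contain the full bimodule $(Q \rtimes K) u_g (Q \rtimes K)$, and in particular $u_g \in N$, as required. The main obstacle is carrying out this bimodule-density step rigorously in the non-discrete setting, where the Fourier expansion converges only in $L^2$ and Fourier components cannot be separated by unitary operations inside $N$; the crucial features that make it feasible are the presence of the Jones projection $e_K$ inside $N$ and the Hecke-pair finiteness of double cosets, both of which come for free from the compactness and openness of $K$.
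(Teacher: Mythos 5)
There is a genuine gap, and it sits exactly at the step you yourself flag as ``the main obstacle''. Your reduction to the claim \emph{if $x \in N$ and $E_K(u_g^*x) \neq 0$ then $u_g \in N$} is fine (it amounts to combining Lemma \ref{compactKdec} with Theorem \ref{sgpthm}), but that claim is the entire content of the theorem and your proposal does not prove it: the ``polar-decomposition and bimodule-density argument'' is only named, not executed, and the mechanism you propose for executing it rests on a false identity. The projection $e_K=\int_K u_k\,dk \in LK$ is \emph{not} the Jones projection of $Q\rtimes K\subset Q\rtimes G$ and does not satisfy $e_K y e_K=E_K(y)e_K$ for $y\in Q\rtimes G$; it implements the expectation of $Q\rtimes K$ onto $Q^K$ (Lemma \ref{corner}). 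For $g\notin K$ one has $e_K u_g e_K=\int_K\int_K u_{kgl}\,dk\,dl\neq 0$ in general, so the projections $u_ge_Ku_g^*$ are not pairwise orthogonal in $Q\rtimes G$ and $e_K$ cannot truncate Fourier supports from inside $N$. (The genuine Jones projection, namely the orthogonal projection of $L^2(Q\rtimes G)$ onto $L^2(Q\rtimes K)$, has the properties you want, but it lives in the basic construction, not in $N$.) Without a device that pushes the $KgK$-Fourier component of $x$ back into $N$, the $\sigma$-weakly closed bimodule $B$ generated by $x$ gives you nothing: irreducibility of $(Q\rtimes K)u_g(Q\rtimes K)$ is useless until you have produced a nonzero element of $N$ supported in $KgK$, and the $L^2$-convergence of the Fourier series does not do that.

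The paper closes this gap differently. It first proves (Lemma \ref{lem:E_L}) that \emph{every} intermediate von Neumann algebra $Q\rtimes K\subset L\subset Q\rtimes G$ is automatically the range of a normal faithful conditional expectation, by verifying the hypotheses of \cite[Corollary 3.11]{ILP98}: the basic construction is $(\ell^\infty(G/K)\ovt Q)\rtimes G$ (Remark \ref{rkcomm}), the relative commutant of $Q\rtimes K$ there is $\ell^\infty(K\backslash G/K)$ (via Lemma \ref{diagcomm}), and the dual operator valued weight is semi-finite on it precisely because $[K:K\cap gKg^{-1}]<\infty$. The double-coset finiteness you invoke is indeed the key ingredient, but it is consumed there rather than in a density argument. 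Once $E_N$ exists, Choda's computation (Theorem \ref{Chodalc}) --- $u_g^*E_N(u_g)\in Q'\cap(Q\rtimes G)=\C$, hence $E_N(u_g)$ is $0$ or $u_g$ --- finishes the proof. To salvage your outline you would need to supply exactly this: the construction of a normal expectation onto $N$ (or an averaging device valued in $N$), and that is where the real work of the theorem lies.
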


Going back to the conjecture, let us finally mention another partial result. It is about intermediate subfactors that are globally invariant under the dual action $\Delta_Q: Q \rtimes G \to (Q \rtimes G) \ovt LG$. Namely if $Q$ is a factor and if $N$ is an intermediate subfactor such that $\Delta_Q(N) \subset N \ovt LG$, then $N$ is of the form $Q \rtimes H$ for some closed subgroup $H < G$, see for instance \cite[Chapter VII.2]{NT79}.

\begin{ques}
Consider a strictly outer action $\alpha$ of a locally compact quantum group $(M,\Delta)$ on a factor $N$. Can one show that any von Neumann subalgebra $Q \subset M \ltimes_\alpha N$ containing $N$ is globally invariant under the dual action? We refer to \cite{V05} for definitions.
\end{ques}

We mention that the Remark above also applies in the quantum setting. Namely, Vaes' result \cite[Theorem 5.3]{V01} is valid in the general quantum setting, while the correspondence result of Izumi-Longo-Popa \cite[Theorem 3.15]{ILP98} was generalized by Tomatsu to the quantum setting in \cite{To09}.

\subsection*{Acknowledgement}
This work was initiated during the trimester {\it Von Neumann algebras} held at the Hausdorff Institute in Spring 2016. Parts of this work are also due to discussions during the Conference in the memory of Uffe Haagerup held in Copenhagen in June 2016 and during the Oberwolfach meeting on C$^*$-algebras in August 2016. We warmly thank the organizers of all these events. The second author was invited at the Mathematical Institute of Bordeaux in September 2016. He gratefully acknowledges the kind hospitality he received.

Mathematically, we are grateful to Sven Raum for interesting discussions and for pointing out \cite{E64} to us regarding the notion of support. We are also indebted to Amaury Freslon for explaining to us part of the history on quantum groups related to this work and for providing references. Theorem \ref{ovwsg} were motivated by discussions of the first author with Hiroshi Ando and Cyril Houdayer. Many thanks to them. We thank Narutaka Ozawa and Yoshimichi Ueda for helpful comments on a previous version of this preprint.

\tableofcontents

\section{Preliminaries}\label{sec:prelim}

\subsection{General notations}

In all the article, the letter $G$ refers to a locally compact\footnote{We follow the French convention according to which the locally compact assumption also contains the Hausdorff axiom.} group; $m_G$ denotes a Haar measure on $G$, and $\delta_G$ the modular function of $G$. When we consider $L^2$-spaces it is always meant with respect to the measure $m_G$. When integrating functions on $G$, we will sometimes use the notation$\ds$ instead of $\text{\rm d} m_G(s)$, $s \in G$. 
The left regular representation of $G$ on $L^2(G)$ is denoted by $\lambda_G$.

The letter $Q$ refers to an arbitrary von Neumann algebra on which $G$ acts. The action will be denoted by $G \actson Q$, and called generically $\sigma$. By an action, we mean an ultraweakly continuous homomorphism from $G$ into the automorphism group of $Q$.

Given a von Neumann algebra $M$ represented on a Hilbert space $H$, $M'$ denotes its commutant, $(M)_1$ its unit ball in the operator norm, $\cU(M)$ its unitary group and $\Aut(M)$ its automorphism group. We denote by $\cS(M)$ and $\cP(M)$ the set of normal faithful states on $M$ and the set of normal faithful semi-finite weights ({\it nfs} weights for short), respectively. For any weight $\Phi \in \cP(Q)$, consider the left ideal $\cn_\Phi(M) = \{ x\in M \, \vert \, \Phi(x^*x)<\infty\}$, on which $x \mapsto\sqrt{ \Phi(x^*x)}$ defines a norm $\Vert \cdot \Vert_\Phi$. We denote by $L^2(M,\Phi)$ the Hilbert space completion of $\cn_\Phi(M)$ and we write $\Lambda_\Phi$ the inclusion map $\Lambda_\Phi:\cn_\Phi(M) \to L^2(M,\Phi)$.

\subsection{Group actions and crossed-product von Neumann algebras}\label{sec:gp-act}

Let us give the precise definition of our main object of study. We refer to \cite[Chapter X]{Ta03} and \cite{Ha78a,Ha78b} for details about the facts below.

\begin{defn}
Fix an action $G \actson Q$ and represent $Q$ on a Hilbert space $\cH$. 
The {\it crossed-product von Neumann algebra}, denoted by $Q \rtimes G$, is the von Neumann algebra on $L^2(G,\cH)$ generated by the operators $\{\pi(x)\, , \, x \in Q\}$ and $\{u_s\, , \,s \in G\}$ defined by the formulae:
\[(\pi(x) \xi)(t) = \sigma_t^{-1}(x)\xi(t) \qquad \text{and} \qquad (u_s\xi)(t) = \xi(s^{-1}t),\]
for all $\xi \in L^2(G,\cH)$.
\end{defn}

For notational simplicity we will often omit the $\pi$ and identify $\pi(Q)$ with $Q$ in the above definition.

Throughout the article we will always assume that $Q$ is standardly represented on $\cH$, with conjugation operator $J$ and positive cone $\cP$. In this case, we abuse with notations and denote again by $\sigma:g \in G \to \sigma_g \in \cU(\cH)$ the canonical implementation of the action $G \actson Q$, see \cite{Ha75}. Then for all $g \in G$ the operator $\rho_G(g)$ on $L^2(G,\cH)$ defined as follows lies in the commutant of $Q \rtimes G$:
\begin{equation}\label{rhoG} \rho_G(g)(f)(s) = \delta_G(g)^{1/2}\sigma_g(f(sg)), \quad \text{for all } s \in G, f \in L^2(G,\cH).\end{equation}

Denote by $\cK(G,Q)$ the $*$-algebra of compactly supported, $*$-ultrastrongly continuous functions, endowed with product and involution given by the formulae
\[(F_1 \cdot F_2)(t) = \int_G \sigma_s(F_1(ts))F_2(s^{-1}) \ds \qquad \text{and} \qquad F_1^*(t) = \delta_G(t)^{-1}\sigma_{t^{-1}}(F_1(t^{-1})^*),\] 
for all $F_1,F_2 \in \cK(G,Q)$, $t \in G$. The algebra $\cK(G,Q)$ is also a two sided $Q$-module with actions
\[(F \cdot x)(t) = F(t)x \qquad \text{and} \qquad (x \cdot F)(t) = \sigma_t^{-1}(x)F(t),\]
for all $F \in \cK(G,Q)$, $x \in Q$, $t \in G$.
The map $F \mapsto  \int_G u_s F(s) \ds$ defines a $Q$-bimodular embedding of $\cK(G,Q)$ into the crossed-product $Q \rtimes G$.
In this way $\cK(G,Q)$ is viewed as an ultraweakly dense $*$-subalgebra of $Q \rtimes G$.

\subsection{Crossed-products by subgroups}\label{sec:CP}

Given a subgroup $H$ of $G$, one can restrict any action $G \actson Q$ to an action $H \actson Q$. In the case where $H < G$ is closed inside $G$, then the von Neumann subalgebra of $Q \rtimes G$ generated by $Q$ and by the unitaries $u_h$, $h \in H$, is isomorphic to $Q \rtimes H$. This can be seen using induced representations, see \cite[Chapter X.4]{Ta03}. With the same tools one can also compute the commutant of $Q \rtimes H$ inside $B(L^2(G,\cH))$.

\begin{thm}[\cite{NT79}, Theorem VII.1.1]
\label{commutant subgroup}
The commutant of $Q \rtimes H$ inside $B(L^2(G,\cH))$ is the von Neumann algebra generated by the commutant of $Q \rtimes G$ and the subalgebra $\cL^\infty(H\backslash G) \subset L^\infty(G)$ consisting of left-$H$-invariant functions on $G$. Equivalently,
\[Q \rtimes H = (Q \rtimes G) \cap \cL^\infty(H\backslash G)'.\]
\end{thm}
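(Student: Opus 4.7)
The statement is equivalent, by the bicommutant theorem applied after taking commutants, to the identity
\[Q \rtimes H = (Q \rtimes G) \cap \cL^\infty(H \backslash G)',\]
so my plan is to prove this. The easy inclusion $\subseteq$ is immediate: $Q \rtimes H \subseteq Q \rtimes G$ by definition, and for $f \in \cL^\infty(H \backslash G)$ the multiplication operator $M_f$ commutes with $\pi(x)$ (since $\pi(x)$ already acts as a pointwise operator-valued multiplication on $L^2(G,\cH)$), while $u_h M_f u_h^{-1} = M_{f(h^{-1}\cdot)} = M_f$ by the left-$H$-invariance of $f$.

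For the reverse inclusion, I would first work on the $\sigma$-weakly dense $*$-subalgebra $\cK(G, Q) \subseteq Q \rtimes G$. Given $F \in \cK(G, Q)$, a direct computation from the formulas of Section~\ref{sec:gp-act} shows that $x := \int_G u_s F(s) \ds$ acts on $L^2(G, \cH)$ by
\[(x\xi)(t) = \int_G \sigma_{t^{-1}s}\bigl(F(s)\bigr)\,\xi(s^{-1}t) \ds.\]
After the change of variable $r = s^{-1}t$ this exhibits $x$ as an integral operator whose kernel is proportional to $K(t,r) = \sigma_{r^{-1}}(F(tr^{-1}))$, with the modular Jacobian absorbed into the underlying measure. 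Consequently $[M_f, x]$ has kernel $(f(t)-f(r))K(t,r)$. Vanishing of this for every $f \in \cL^\infty(H \backslash G)$ forces $F(tr^{-1}) = 0$ whenever $tr^{-1} \notin H$, since left-$H$-invariant functions separate distinct cosets of $H$. Hence $\supp F \subseteq H$, so $x = \int_H u_h F(h) \dh \in Q \rtimes H$.

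The remaining step is to extend this from $\cK(G, Q)$ to arbitrary $x \in (Q \rtimes G) \cap \cL^\infty(H \backslash G)'$, and this is the main obstacle. For discrete $G$ one could simply read off the Fourier coefficients of $x$ at each group element and keep only those indexed by $H$; for non-discrete $G$ there is no such decomposition. The cleanest route in general is to invoke the support machinery for $Q \rtimes G$ that the paper develops in Section~\ref{section support}: the commutation with every element of $\cL^\infty(H \backslash G)$ is precisely the condition that the Eymard-type support of $x$ lie in $H$, and one then applies the key fact highlighted in the introduction that any element of $Q \rtimes G$ whose support sits in a closed subgroup automatically belongs to the corresponding crossed product. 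An alternative would be to construct explicitly a net in $\cK(G, Q) \cap \cL^\infty(H \backslash G)'$ converging strongly to $x$, for instance by convolving $x$ with an approximate identity on $G$ chosen to respect the commutation relation, but this convolution argument requires care to ensure the approximants remain in the commutant.
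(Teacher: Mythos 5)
Your easy inclusion and the kernel computation on $\cK(G,Q)$ are fine, but the proposal has a genuine gap exactly where you locate "the main obstacle," and the primary route you offer to close it is circular within this paper. You propose to finish by invoking the support machinery of Section \ref{section support}, i.e.\ the fact that an element of $Q \rtimes G$ whose support lies in a closed subgroup $H$ belongs to $Q \rtimes H$ (Theorem \ref{sgpthm}). But the paper's proof of Theorem \ref{sgpthm} is itself an application of Theorem \ref{commutant subgroup}: it reduces "$\supp(x) \subset H$ implies $x \in Q \rtimes H$" precisely to showing that $x$ commutes with $\cL^\infty(H\backslash G)$ and then quotes the present theorem. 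So you cannot use that result here without begging the question. Your fallback — approximating $x$ by elements of $\cK(G,Q)$ that still commute with $\cL^\infty(H\backslash G)$ — is not carried out, and it is genuinely nontrivial: for non-discrete $G$ there is no Fourier decomposition, the natural regularizations $m_\phi(x)$ (with $\phi$ an approximate identity in $A(G)$) do land back in the commutant of $\cL^\infty(H\backslash G)$ but need not land in $\cK(G,Q)$, so the dense-subalgebra kernel computation never gets applied to them. (A smaller issue: when $H$ is not open it has Haar measure zero, so the only $F \in \cK(G,Q)$ with $\supp F \subseteq H$ gives $x=0$; your $\cK(G,Q)$ step therefore says nothing about the generic element of $Q \rtimes H$ and really only confirms the statement on a subspace that can be trivial.)

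For comparison: the paper does not prove this theorem at all. It is quoted from Nakagami--Takesaki \cite{NT79}, Theorem VII.1.1, and the surrounding text indicates that the intended proof goes through the theory of induced representations and the commutation theorem for crossed products (Takesaki, Chapter X.4) — one identifies $Q \rtimes H$ acting on $L^2(G,\cH)$ as an induced covariant system and computes its commutant via the associated system of imprimitivity, which produces exactly the algebra generated by $(Q\rtimes G)'$ and $\cL^\infty(H\backslash G)$. If you want a self-contained argument, that is the non-circular route to follow; alternatively you would have to re-prove the hard direction of Theorem \ref{sgpthm} from scratch without using Theorem \ref{commutant subgroup}, which amounts to the same amount of work.
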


\begin{rem}\label{rkcomm}
Using the $J$-map, the above theorem also allows to compute the basic construction $\langle Q \rtimes G, Q \rtimes H \rangle$ of the inclusion $Q \rtimes H \subset Q \rtimes G$. In fact this description gives the isomorphism
\[\langle Q \rtimes G, Q \rtimes H \rangle \simeq (\cL^\infty(G/H) \ovt Q) \rtimes G,\]
where $G$ acts diagonally.
\end{rem}

\subsection{Modular theory and operator valued weights}

Given a von Neumann algebra $M$, the modular flow of a weight $\Phi \in \cP(M)$ is denoted by $\sigma_t^\Phi \in \Aut(M)$, $t \in \R$. The centralizer of $\Phi$ in $M$ is the subalgebra of elements fixed by the flow $(\sigma_t^\Phi)_t$, it is denoted by $Q^\Phi$. If $\psi \in \cP(M)$ is another weight, $(D\psi:D\Phi)_t$, $t \in \R$, denotes the Connes Radon-Nikodym derivate as defined in \cite[Section 1.2]{Co73}. We will need the following simple Lemma.

\begin{lem}\label{RNautom}
Consider a von Neumann algebra $M$, two weights $\Phi, \Psi \in \cP(M)$, and an automorphism $\theta \in \Aut(M)$. Then $(D\Psi \circ \theta : D\Phi \circ \theta)_t = \theta^{-1}((D\Psi:D\Phi)_t)$, for all $t \in \R$.
\end{lem}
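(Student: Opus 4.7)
The plan is to appeal to the standard characterization of the Connes cocycle as the unique family of unitaries implementing a certain intertwining between the modular flows. The key preliminary observation is that for any weight $\Phi \in \cP(M)$ and any $\theta \in \Aut(M)$ one has
\[\sigma_t^{\Phi \circ \theta} = \theta^{-1} \circ \sigma_t^\Phi \circ \theta,\]
which follows immediately from uniqueness of the modular flow since $\theta^{-1} \circ \sigma_t^\Phi \circ \theta$ is a one-parameter group of automorphisms of $M$ satisfying the KMS condition with respect to $\Phi \circ \theta$.

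Next, recall that $(D\Psi:D\Phi)_t \in \cU(M)$ is uniquely characterized by the identity
\[\sigma_t^\Psi(x) = (D\Psi:D\Phi)_t \, \sigma_t^\Phi(x) \, (D\Psi:D\Phi)_t^*, \quad x \in M,\]
together with the cocycle relation $(D\Psi:D\Phi)_{s+t} = (D\Psi:D\Phi)_s \cdot \sigma_s^\Phi((D\Psi:D\Phi)_t)$ (see \cite[Section 1.2]{Co73}). I would therefore define $v_t := \theta^{-1}((D\Psi:D\Phi)_t)$ and verify both properties for the pair $(\Psi \circ \theta, \Phi \circ \theta)$. Applying $\theta^{-1}$ to the intertwining identity above, evaluated at $\theta(x)$, and using the preliminary observation yields
\[\sigma_t^{\Psi \circ \theta}(x) = v_t \, \sigma_t^{\Phi \circ \theta}(x) \, v_t^*.\]
For the cocycle property, applying $\theta^{-1}$ to $(D\Psi:D\Phi)_{s+t} = (D\Psi:D\Phi)_s \, \sigma_s^\Phi((D\Psi:D\Phi)_t)$ and invoking $\theta^{-1} \circ \sigma_s^\Phi = \sigma_s^{\Phi \circ \theta} \circ \theta^{-1}$ gives $v_{s+t} = v_s \, \sigma_s^{\Phi \circ \theta}(v_t)$.

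By the uniqueness of the Connes Radon-Nikodym cocycle, we conclude $v_t = (D(\Psi \circ \theta):D(\Phi \circ \theta))_t$, which is exactly the claimed formula. There is no real obstacle here: the whole argument is a routine consequence of uniqueness in modular theory. The only point that deserves care is making sure the KMS/cocycle characterization one invokes is the full one (intertwining plus 2-cocycle in $\sigma^\Phi$), since the intertwining condition alone only determines $(D\Psi:D\Phi)_t$ up to multiplication by a unitary in the centralizer $M^\Phi$.
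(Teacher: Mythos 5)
Your reduction to a uniqueness statement is the right instinct, but the characterization you invoke is not actually a characterization, so the final step has a genuine gap. The intertwining relation $\sigma_t^\Psi = \Ad(u_t)\circ\sigma_t^\Phi$ together with the cocycle identity $u_{s+t}=u_s\,\sigma_s^\Phi(u_t)$ does \emph{not} determine $u_t$ uniquely. Concretely, take $\Psi=2\Phi$: then $(D\Psi:D\Phi)_t=2^{it}$, yet the constant family $u_t\equiv 1$ also satisfies both the intertwining relation (the two modular flows coincide) and the cocycle identity. More generally, two families satisfying both conditions can differ by any $\sigma^\Phi$-cocycle with values in the center of $M$. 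Connes' actual uniqueness theorem requires a third, KMS-type analyticity condition linking $\Phi$ and $\Psi$ (the existence, for suitable $x,y$, of a bounded analytic function on the strip interpolating between $\Psi(u_t\sigma_t^\Phi(y)x)$ and $\Phi(x u_t\sigma_t^\Phi(y))$); your closing remark shows you sensed that intertwining alone is too weak, but adding only the cocycle identity does not close the loophole. To repair your argument along the same lines you would have to verify this analyticity condition for $v_t=\theta^{-1}((D\Psi:D\Phi)_t)$ relative to the pair $(\Psi\circ\theta,\Phi\circ\theta)$, which is doable but is precisely the part you cannot skip.

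The paper avoids this issue by working directly with the defining formula of the cocycle via the balanced weight: on $\tilde M = M\ot M_2(\C)$ one sets $\varphi(\sum x_{ij}\ot e_{ij})=\Phi(x_{11})+\Psi(x_{22})$, so that $(D\Psi:D\Phi)_t\ot e_{2,1}=\sigma_t^\varphi(1\ot e_{2,1})$ by definition. The balanced weight of the pair $(\Phi\circ\theta,\Psi\circ\theta)$ is $\varphi\circ(\theta\ot\id)$, and your correct preliminary observation $\sigma_t^{\varphi\circ(\theta\ot\id)}=(\theta\ot\id)^{-1}\circ\sigma_t^\varphi\circ(\theta\ot\id)$ (applied to $\tilde M$ rather than $M$, and justified by KMS-uniqueness of the modular flow, where uniqueness genuinely holds) immediately yields $\sigma_t^{\varphi\circ(\theta\ot\id)}(1\ot e_{2,1})=(\theta^{-1}\ot\id)\bigl((D\Psi:D\Phi)_t\ot e_{2,1}\bigr)$, which is the claim. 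In short: transport the conjugation identity to the $2\times 2$ matrix amplification where the cocycle is defined, rather than trying to re-characterize the cocycle downstairs.
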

\begin{proof}
Denote by $(e_{i,j})_{1\leq i,j \leq 2}$ the canonical basis of $M_2(\C)$. Put $\tilde M := M \ot M_2(\C)$, and define $\varphi \in \cP(\tilde M)$ by the formula 
\[\varphi \left(\sum_{i,j = 1}^2 x_{i,j} \ot e_{i,j}\right) = \Phi(x_{1,1}) + \Psi(x_{2,2}), \quad \text{for all } x_{i,j} \in M.\]
Note that the weight on $\tilde M$ associated in a similar manner to $\Phi \circ \theta$ and $\Psi \circ \theta$ is $\varphi \circ (\theta \ot \id)$.
By the definition of Connes Radon-Nikodym derivative \cite[Lemme 1.2.2]{Co73}, we have for all $t \in \R$,
\[(D\Psi:D\Phi)_t \ot e_{2,1} = \sigma_t^\varphi(1 \ot e_{2,1}) \quad \text{and} \quad (D\Psi \circ \theta : D\Phi \circ \theta)_t =  \sigma_t^{\varphi \circ (\theta \ot \id)}(1 \ot e_{2,1}).\]
The KMS condition, see \cite[Chapter VIII.1]{Ta03}, implies $\sigma_t^{\varphi \circ (\theta \ot \id)} = (\theta \ot \id)^{-1} \circ \sigma_t^\varphi \circ (\theta \ot \id)$ for all $t \in \R$. The lemma easily follows.
\end{proof}

The notions of modular group and Connes Radon-Nikodym derivative have been defined for normal faithful conditional expectations in \cite{CD75}. We will need the extended definition for operator valued weights defined by Haagerup \cite{Ha79b}.
Let us fix some notations and recall known facts about operator valued weights. We refer to \cite{Ha79a,Ha79b} for precise definitions and proofs.

If $N\subset M$ is an inclusion of von Neumann algebras, then we denote by $\cP(M,N)$ the set of nfs operator valued weights from $M$ to $N$. Given $T \in \cP(M,N)$, we set $\cn_T(M) := \{x \in M \, \vert \, T(x^*x) \in N\}$.
One can define the {\it composition} $\Phi \circ T$ of a weight $T\in\cP(M,N)$, with an nfs weight $\Phi \in \cP(N)$; the resulting weight $\Phi \circ T$ is again normal faithful semi-finite. More generally, it makes sense to compose operator valued weights: if $M_1 \subset M_2 \subset M_3$ are von Neumann algebras and $S \in \cP(M_3,M_2)$, $T \in \cP(M_2,M_1)$ then one can naturally define $T \circ S \in \cP(M_3,M_1)$ in such a way that for all $\Phi \in \cP(M_1)$, we have $\Phi \circ (T \circ S) = (\Phi \circ T) \circ S$.

If $T\in\cP(M,N)$ and $\Phi \in \cP(N)$, then for all $t \in \R$, the modular automorphism $\sigma_t^{\Phi\circ T}$ associated to the weight $\Phi \circ T$ leaves the von Neumann subalgebra $N^c:=N'\cap M$ globally invariant, and its restriction to $N^c$ does not depend on the choice of $\Phi$. This restriction is called the {\it modular flow of $T$}, denoted by $\sigma_t^T$. Moreover, if $S \in \cP(M,N)$ is another operator valued weight then for all $t \in \R$, the Connes Radon-Nikodym derivative $(D(\Phi \circ S):D(\Phi \circ T))_t$ is an element of $N^c$ that does not depend on $\Phi$. It is then denoted by $(DS:DT)_t$ and called the {\it Connes Radon-Nikodym derivative} at time $t$.

If $\cP(M,N)$ is non-empty then $\cP(N',M')$ is non-empty. In particular in this case $\cP(M_1,M)$ is non-empty as well, where $M_1$ denotes the Jones basic construction of the inclusion $N \subset M$, see \cite{Jo83}. Better, if $N \subset M$ is with expectation $E$ there exists $\widehat{E} \in \cP(M_1,M)$ such that $\widehat E(e)= 1$, where $e$ is the Jones projection associated with $E$, see \cite[Lemma 3.1]{Ko85}. $\widehat E$ is called the {\it dual operator valued weight}.

Finally, consider an action $G \actson Q$. By \cite[Theorem 3.1]{Ha78b}, there exists a unique operator valued weight $T_Q\in \cP(Q\rtimes G,Q)$ such that for all $F\in\cK(G,Q)$, $g\in G$ and $x\in Q\rtimes G_+$,
\begin{equation}\label{eqdualweight}
T_Q(F^*F)=(F^*F)(e) \text{ and } T_Q(u_g x u_g^*) = \delta_G(g)u_g T_Q( x ) u_g^*.
\end{equation}
We call it the {\it Plancherel operator valued weight}. For any weight $\Phi \in \cP(Q)$, one defines the {\it dual weight} $\Phi\circ T_Q \in \cP(Q\rtimes G)$. See \cite{Ha78a} for a different construction.

\subsection{Outer actions on von Neumann algebras}\label{outact}

\begin{defn}
We say that a group action $G \actson Q$ on a von Neumann algebra is
\begin{itemize}
\item {\it Properly outer} if no non-trivial element of $G$ acts on $Q$ by inner automorphism. 
\item {\it Strictly outer} if the relative commutant $Q' \cap Q \rtimes G$ is trivial.
\item {\it Minimal} if it is faithful and the fixed point subalgebra $Q^G$ is an irreducible subfactor of $Q$: $(Q^G)' \cap Q = \C$. 
\end{itemize}
\end{defn}

As observed in \cite{V01,V05}, a strictly outer action has to be properly outer, but the converse is not true in general.

\begin{lem}[\cite{V01}, Proposition 6.2]
\label{minaction}
An action of a compact group is strictly outer if and only if it is minimal.
\end{lem}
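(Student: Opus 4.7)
I will treat the two implications separately: the easy direction, minimal $\Rightarrow$ strictly outer, uses a Peter--Weyl/intertwiner analysis on the dense subalgebra $\cK(K,Q) \subset Q \rtimes K$; the converse relies on the basic construction identification of \cite[Theorem 5.3]{V01} already invoked in the Remark above, which is available here since compact actions are automatically integrable.

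For the first implication, assume $K \actson Q$ is faithful and $(Q^K)' \cap Q = \C$. I first deduce two preliminary consequences of minimality. Factoriality of $Q$ follows from $Q' \cap Q \subset (Q^K)' \cap Q = \C$. Proper outerness of $\sigma$ follows because any inner implementing unitary $u \in \cU(Q)$ for $\sigma_k$ must commute with $Q^K$ (since $\sigma_k$ fixes $Q^K$), hence lies in $(Q^K)' \cap Q = \C$, giving $\sigma_k = \id$ and then $k = e$ by faithfulness. Next I analyse $Q' \cap (Q \rtimes K)$ using the dense $*$-subalgebra $\cK(K,Q)$: writing $z = \int_K f(k)\, u_k\, dm_K(k)$, the condition $z \in Q'$ rewrites as $af(k) = f(k)\sigma_k(a)$ for a.e.\ $k \in K$ and all $a \in Q$. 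For $k = e$ this forces $f(e) \in Q' \cap Q = \C$; for $k \neq e$, applying the polar decomposition to a putative non-zero solution and using factoriality of $Q$ would realise $\sigma_k$ as an inner automorphism, contradicting proper outerness. To extend the conclusion beyond $\cK(K,Q)$, I project onto isotypic components under the dual coaction of $\widehat K$ on $Q \rtimes K$: the commutant $Q' \cap (Q \rtimes K)$ is $\widehat K$-invariant since $Q$ is the fixed-point algebra of the coaction; the trivial isotypic component of any $z \in Q' \cap (Q \rtimes K)$ lies in $Q' \cap Q = \C$, while each non-trivial isotypic component is controlled by the intertwiner argument above and must vanish.

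For the converse, assume $Q' \cap (Q \rtimes K) = \C$. Faithfulness is immediate: if $\sigma_k = \id$ then $u_k \in Q' \cap (Q \rtimes K) = \C$, but $u_k$ has support $\{k\}$ in the sense of Section \ref{section support}, so $k = e$. For the identity $(Q^K)' \cap Q = \C$, strict outerness together with integrability of compact actions allows me to invoke Vaes' theorem \cite[Theorem 5.3]{V01} and identify $(Q \subset Q \rtimes K)$ with the basic construction $(Q \subset \langle Q, e_{Q^K}\rangle)$ of $Q^K \subset Q$. Representing $Q$ in standard form with modular conjugation $J$, one has $\langle Q, e_{Q^K}\rangle = (JQ^KJ)'$ and $Q' = JQJ$. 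Writing a commutant element of $Q$ in the basic construction as $JxJ$ with $x \in Q$, the condition $JxJ \in (JQ^KJ)'$ is equivalent to $[x,a]=0$ for all $a \in Q^K$, i.e.\ $x \in (Q^K)'$. This yields
\[
Q' \cap \langle Q, e_{Q^K}\rangle \;=\; J\bigl((Q^K)' \cap Q\bigr)J.
\]
By hypothesis the left-hand side is trivial, hence $(Q^K)' \cap Q = \C$.

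The main technical obstacle will be the Peter--Weyl reduction in the easy direction --- going from the algebraic subalgebra $\cK(K,Q)$ to arbitrary elements of $Q \rtimes K$ via the dual coaction of $\widehat K$ --- together with the correct spatial application of Vaes' identification and the attendant commutant formula for the basic construction in the converse.
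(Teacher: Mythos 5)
The paper does not actually prove this lemma --- it is quoted verbatim from \cite[Proposition 6.2]{V01} --- so your proposal should be judged against Vaes' argument rather than anything in the text. Your converse direction (strictly outer $\Rightarrow$ minimal) is correct and is essentially Vaes' route: faithfulness via the support of $u_k$, then the identification of $Q \subset Q \rtimes K$ with the basic construction of $Q^K \subset Q$ (legitimate here, since compact actions are integrable and strict outerness is the standing hypothesis, so there is no circularity), and the standard commutant formula $Q' \cap (JQ^KJ)' = J\bigl((Q^K)' \cap Q\bigr)J$ in standard form. That part is complete.

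The forward direction is where the one genuine soft spot lies. Your intertwiner computation $af(k) = f(k)\sigma_k(a)$ is carried out only for $z = \int f(k)u_k\,dm_K(k)$ with $f \in \cK(K,Q)$, and the reduction of a general $z \in Q' \cap (Q\rtimes K)$ to this case rests on the unproved assertion that each $\pi$-isotypic component of $z$ under the dual coaction again lies in $\cK(K,Q)$. This is true --- the $\pi$-spectral subspace of $Q \rtimes K$ equals the finite $Q$-span of the elements $u(\overline{\pi_{ij}}) = \int \overline{\pi_{ij}(k)}u_k\,dm_K(k)$, via the Fourier inversion $x = \sum_{i,j} d_\pi E_Q\bigl(x\,u(\overline{\pi_{ij}})^*\bigr)u(\overline{\pi_{ij}})$ and Schur orthogonality (see \cite{NT79}) --- but it is exactly the ``main technical obstacle'' you name without resolving, so you should supply it. You may prefer the shorter argument that Vaes uses and that this paper itself recycles in the proof of Lemma \ref{relcomK2}: viewing $Q \rtimes K \subset B(L^2(K)) \ovt Q$ with $\pi(Q^K) = 1 \ot Q^K$, any $z \in Q' \cap (Q\rtimes K)$ commutes with $1 \ot Q^K$, so minimality and the tensor-product commutation theorem give $z \in B(L^2(K)) \ot 1$, hence $z \in LK \ot 1$; commutation with all $\pi(a)$, faithfulness and Stone--Weierstrass then force $z \in LK \cap L^\infty(K) = \C$. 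This bypasses both the isotypic decomposition and the proper outerness step entirely, whereas your Peter--Weyl route buys a more hands-on Fourier picture at the cost of the spectral-subspace lemma above.
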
 

Let us record a basic example for later use.

\begin{exmp}
\label{irreducible}
Consider a diffuse factor $Q_0$. For any faithful action of a group $H$ on a finite set $X$, the corresponding Bernoulli action $H \actson Q_0^{\ovt X}$ is strictly outer, hence minimal.
\end{exmp}

In the setting of totally disconnected groups the following lemma will be useful. Its proof is very much inspired from that of \cite[Proposition 6.2]{V01}, with an other input, the Galois correspondence theorem \cite[Theorem 3.15]{ILP98}.

\begin{lem}\label{relcomK2}
Consider a minimal action $K_0 \actson Q$ of a compact group $K_0$. For any open subgroup $K < K_0$, we have $(Q^K)' \cap (Q \rtimes K_0) = LK \subset Q \rtimes K$.
\end{lem}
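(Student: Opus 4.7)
The plan is to pass to the basic-construction picture and run a Fourier analysis on $K_0$. First I observe that the restricted action $K \actson Q$ is itself minimal: $K_0 \actson Q$ is faithful so $K \actson Q$ is too, and $(Q^K)' \cap Q \subset (Q^{K_0})' \cap Q = \C$ because $Q^{K_0} \subset Q^K$. Lemma \ref{minaction} then says both actions are strictly outer, and by \cite[Theorem 5.3]{V01} (recalled in the introduction) both $Q \subset Q \rtimes K$ and $Q \subset Q \rtimes K_0$ are realizations of the basic construction, represented on $L^2(Q)$. The case $K = K_0$ of the lemma follows by a direct Fourier argument on $K_0$: using the canonical conditional expectation $E_Q : Q \rtimes K_0 \to Q$ coming from the Haar trace on $LK_0$, expand $y = \int_{K_0} u_g F_g(y)\, dg$ with $F_g(y) := E_Q(u_g^* y) \in Q$; commutation with $Q^{K_0}$ gives $F_g(y) q = \sigma_g^{-1}(q) F_g(y) = q F_g(y)$, hence $F_g(y) \in (Q^{K_0})' \cap Q = \C$ and $y \in LK_0$.

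For the main case, I take $y \in (Q^K)' \cap (Q \rtimes K_0)$, Fourier-expand over $K_0$, and obtain $F_g(y) q = \sigma_g^{-1}(q) F_g(y)$ for every $q \in Q^K$. For $g \in K$ this forces $F_g(y) \in (Q^K)' \cap Q = \C$ as before. The crucial step is $F_g(y) = 0$ for $g \notin K$. Manipulating the commutation relation gives
\[
F_g(y)^* F_g(y) \in (Q^K)' \cap Q = \C
\quad\text{and}\quad
F_g(y) F_g(y)^* \in (Q^{g^{-1}Kg})' \cap Q = \C,
\]
where the second equality uses that $g^{-1} K g$ is again a compact open subgroup of $K_0$, hence acts minimally on $Q$ by exactly the argument of the first paragraph. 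Thus $F_g(y) = \lambda v$ for some $\lambda \in \C$ and $v \in \cU(Q)$, and if $\lambda \neq 0$ then $v q v^* = \sigma_g^{-1}(q)$ for every $q \in Q^K$.

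To derive the contradiction I set $\beta := \sigma_g \circ \Ad(v) \in \Aut(Q)$; one checks $\beta|_{Q^K} = \id$. Here I invoke a Galois-type consequence of \cite[Theorem 3.15]{ILP98}: for a minimal action of a compact group on a factor, any automorphism of $Q$ fixing $Q^K$ pointwise must, modulo $\operatorname{Inn}(Q)$, agree with $\sigma_k$ for some $k \in K$. To extract this from the ILP correspondence I extend $\beta$ to an automorphism $\tilde\beta$ of the basic construction $Q \rtimes K = \langle Q, e_{Q^K}\rangle$ fixing $Q^K$ and $e_{Q^K}$, and use the normalizer description in Corollary \ref{normalizer} (valid since $K \actson Q$ is strictly outer) to force $\tilde\beta(u_k) = u_{k'(k)}$ for some group automorphism of $K$; the conclusion $\beta \in \sigma(K) \cdot \operatorname{Inn}(Q)$ then follows. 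Hence $\bar\beta = \bar\sigma_k$ in $\Out(Q)$ for some $k \in K$, but also $\bar\beta = \bar\sigma_g$ since $\Ad(v)$ is inner, so injectivity of $K_0 \hookrightarrow \Out(Q)$ (proper outerness) forces $g = k \in K$, contradicting $g \notin K$. Therefore $F_g(y) = 0$ off $K$, and Fourier reconstruction gives $y \in LK$.

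The principal difficulty is the Galois-type step in the third paragraph: \cite[Theorem 3.15]{ILP98} is stated for intermediate subfactors and not for the group of automorphisms fixing a subfactor pointwise. Extracting the latter from the former — via the basic-construction extension of $\beta$ and the normalizer description — is where the bulk of the real work lies, particularly when $Q$ is of type ${\rm III}$ and the Fourier manipulations over $K_0$ must be carried out without a trace.
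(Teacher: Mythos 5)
Your argument breaks down at the very first step, before the Galois issue you flag. The Fourier expansion $y = \int_{K_0} u_g F_g(y)\,dg$ with $F_g(y) := E_Q(u_g^* y) \in Q$ presupposes a normal conditional expectation $E_Q : Q \rtimes K_0 \to Q$ and a pointwise reconstruction from "coefficients at group elements". Neither exists when $K_0$ is a non-discrete compact group (the case of interest: $K_0$ is a profinite compact open subgroup and $K$ a proper open subgroup). Indeed, since the action is minimal, hence strictly outer, Theorem \ref{ovwsg}(2) of this very paper shows that $Q = Q \rtimes \{e\} \subset Q \rtimes K_0$ is \emph{not} with expectation because $\{e\}$ is not open in $K_0$; only the unbounded Plancherel operator valued weight is available. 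And even formally, a single point $g$ has Haar measure zero, so "the Fourier coefficient at $g$" does not determine anything: for $y = u_h$ your formula would return $0$ for almost every $g$ and reconstruct $0$, not $u_h$. This is exactly the "no Fourier decomposition" obstruction emphasized in the introduction. Your argument is essentially the discrete-group argument and does not survive the passage to non-discrete $K_0$. (The paper instead works in the concrete picture $Q \rtimes K_0 \subset B(L^2(K_0)) \ovt Q$: minimality of $K_0 \actson Q$ and commutation with $Q^{K_0}$ force $x \in (B(L^2(K_0)) \ot 1) \cap (Q\rtimes K_0) = LK_0 \ot 1$, and then one shows the resulting element of $LK_0$ commutes with $\ell^\infty(K\backslash K_0)$, hence lies in $LK$.)

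The second gap is the one you acknowledge: the Galois-type statement you want ("any automorphism fixing $Q^K$ pointwise lies in $\sigma(K)\cdot\operatorname{Inn}(Q)$") is strictly stronger than what \cite[Theorem 3.15]{ILP98} asserts, and your route to it via an extension to the basic construction is only a sketch. It is also more than is needed: the only consequence of \cite[Theorem 3.15]{ILP98} the proof requires is that the pointwise stabilizer of $Q^K$ inside $K_0$ is exactly $K$, i.e.\ for $g \in K_0 \setminus K$ there exists $a \in Q^K$ with $\sigma_g(a) \neq a$. This follows directly from the intermediate-subfactor correspondence applied to $Q^{L} \subset Q^K$ where $L$ is the closed subgroup generated by $K$ and $g$, and is what lets the slices $(\id \ot \mu)(\pi(a))$, $a \in Q^K$, separate the points of $K\backslash K_0$. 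I would encourage you to redo the proof in the spatial picture above, where both difficulties disappear.
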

\begin{proof}
Recall that $Q$ is represented on the Hilbert space $\cH$ and that $Q \rtimes K_0$ is the subalgebra of $B(L^2(K_0)) \ovt Q$ generated by $LK_0 \ot 1$ and by the operators $\pi(a) \in L^\infty(K_0) \ovt Q$, $a \in Q$, defined by $\pi(a)(g) = \sigma_g^{-1}(a)$, $g \in G$. In this picture, we have $\pi(Q^{K_0}) = 1 \ot Q^{K_0}$.

Take $x \in (Q^K)' \cap M$. Since $x$ commutes with $Q^{K_0}$ and $K_0$ acts minimally, we deduce that $x \in B(L^2(K_0)) \ot 1$. Note also that $(Q \rtimes K_0) \cap B(L^2(K_0)) \ot 1 = LK_0 \ot 1$, so we can write $x = z \ot 1$ for some $z \in LK_0$.

For all $a \in Q^K$ and all linear functional $\mu \in Q_*$, we have
\[z(\id \ot \mu)(\pi(a)) = (\id \ot \mu)(\pi(a))z.\]
Note that the functions $(\id \ot \mu)(\pi(a)) \in L^\infty(K_0)$ are left $K$-invariant. Let us prove that these functions generate $\ell^\infty( K\backslash K_0)$ as a von Neumann algebra as $a \in Q^K$ and $\mu \in Q_*$. To see that it is sufficient to check that they separate points of $K\backslash K_0$.
Take $g,h \in K_0$ such that $gh^{-1} \notin K$. Then by \cite[Theorem 3.15]{ILP98}, there exists $a \in Q^K$ such that $\sigma_{gh^{-1}}(a) \neq a$. In particular $\pi(a)(g) = \sigma_g^{-1}(a)$ and $\pi(a)(h) =  \sigma_h^{-1}(a)$ are distinct elements of $Q$. So we can find a linear functional $\mu \in Q_*$ that separates them.

So we arrived at the conclusion that $z$ commutes with $\ell^\infty(K\backslash K_0)$, and in particular with $P(K)$, the orthogonal projection onto $L^2(K)$. This means that $z$ leaves $L^2(K)$ invariant : $z \in LK$.
\end{proof}

\subsection{Fourier algebra, dual action, multipliers}
\label{section:fourier}

For a locally compact group $G$, denote by $A(G)$ its {\it Fourier algebra} as introduced by Eymard \cite{E64}. By definition $A(G)$ is the set of functions on $G$ of the form $\xi \ast \tilde{\eta}$, $\xi,\eta \in L^2(G)$, where $\ast$ denotes the convolution product and $\tilde{\eta}$ is the function $g \mapsto \overline{\eta(g^{-1})}$. Note that for all $g \in G$, we have the equality $(\xi \ast \tilde{\eta})(g) = \langle \lambda(g)\xi,\eta \rangle$. This set is an algebra under the pointwise multiplication.

The norm of $\phi \in A(G)$ is defined to be the minimal value of $\Vert \xi \Vert \Vert \eta \Vert$ as the functions $\xi, \eta \in L^2(G)$ satisfy $\phi = \xi \ast \tilde{\eta}$. With this norm, $A(G)$ is a Banach algebra. The set of compactly supported continuous functions on $G$ is contained (densely) inside $A(G)$. As a Banach space, $A(G)$ is isometric to the predual $(LG)_*$ of $LG$. The duality pairing is given by the well defined formula $< x, \phi > :=  \langle x \xi,\eta\rangle$, for all $x \in LG$, and $\phi = \xi \ast \tilde{\eta} \in A(G)$. We will abuse with notations and write $\phi(x)$ for $x \in LG$ to mean $< x,\phi >$. This notation is somewhat consistent with the fact that $\phi$ is a function on $G$, namely we have: $\phi(u_g) = \phi(g)$ for all $g \in G$.

Using the product on $A(G)$ we can define multipliers on $LG$. More precisely, any $\phi \in A(G)$ gives rise to a normal completely bounded map $m_\phi: LG \to LG$, defined by the formula:
\[< m_\phi(x), \psi> := < x , \psi \phi > \quad \text{ for all } x \in LG, \, \psi \in A(G).\]

More generally, for any action $G \actson Q$, from an element $\phi\in A(G)$ one can construct a multiplier $m_\phi$ on $M:= Q \rtimes G$ in the following way. Consider the unitary operator $W$ on $L^2(G \times G,\cH)$ such that $(W\xi)(g,h) = \xi(g,gh)$ for all $\xi\in L^2(G \times G,\cH)$, $g,h\in G$. Denote by $\Delta:=\Ad(W^*)$ the associated automorphism of $B(L^2(G \times G,\cH))$. We can identify $L^2(G \times G,\cH)$ with $L^2(G,\cH) \otimes L^2(G)$ in such a way that 
\[\Delta(M \otimes 1) \subset M \ovt LG, \text{ and the restriction } \Delta\vert_{L^\infty(G) \otimes 1} = \id.\]

\begin{defn}
With the above notations, the {\it Fourier multiplier} associated with an element $\phi \in A(G)$ is the normal completely bounded map $m_\phi : Q\rtimes G \to Q \rtimes G$ defined by the formula 
\[m_\phi(x) = (\id \ot \phi) \circ \Delta(x \ot 1), \quad x \in Q \rtimes G.\]
\end{defn}

In practice, the multiplier $m_\phi$ is characterized by the formula $m_\phi(au_g) = \phi(g)au_g$ for all $a \in Q$, $g \in G$.
In this way, one easily checks that in the case where $Q = \C$, the two constructions of multipliers coincide.

\section{Support and applications}\label{section support}

In this section we give generalities about the spectrum of the dual action, defined for instance in \cite[Chapter IV.1]{NT79}. We adopt the point of view of Eymard \cite{E64}, and rather talk about ``support'' because we believe it is more transparent for the reader who is familiar with actions of discrete groups, and not so much with the quantum group language. Our goal is to prove Theorem \ref{sgpthm}, regarding elements whose support is contained in a subgroup. This is certainly known to experts but we were not able to find an explicit reference, although it is used in \cite[Theorem VII.2.1]{NT79}. For convenience we tried to keep this section self-contained.

\subsection{Definition and first properties}

Let us fix an arbitrary action $G \actson Q$ on a von Neumann algebra.

\begin{defn}\label{support}
The {\it support} of an element $x \in Q \rtimes G$, denoted by $\supp(x)$, is the set of elements $g \in G$ such that for all $\phi \in A(G)$ satisfying $\phi(g) \neq 0$, we have $m_\phi(x) \neq 0$.
\end{defn}

We also describe the support explicitly in terms of interactions between $Q \rtimes G$ and the copy of $L^\infty(G)$ inside $B(L^2(G,\cH))$. If $\Omega \subset G$ is a measurable subset, we write $P(\Omega) \in L^\infty(G) \ot 1_\cH$ for the orthogonal projection from $L^2(G,\cH)$ onto $L^2(\Omega,\cH)$.

\begin{prop}\label{eqsupp}
Take $x \in Q \rtimes G$ and $g \in G$. The following are equivalent:
\begin{enumerate}[(i)]
\item $g \in \supp(x)$;
\item For any non-empty open set $\Omega \subset G$ we have $P(g\Omega) x P(\Omega)\neq 0$.
\end{enumerate}
\end{prop}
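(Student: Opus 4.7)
My plan is to derive both implications from one weakly convergent integral formula for $m_\phi(x)$, combined with regularity of the Fourier algebra for the harder direction. The key identity, for $\phi = \xi\ast\tilde\eta \in A(G)$ with $\xi,\eta \in L^2(G)$, is
\begin{equation*}
m_\phi(x) = \int_G \rho_G(h)\bigl[\,m_{\overline\eta}\,x\,m_\xi\,\bigr]\rho_G(h^{-1})\dh,
\end{equation*}
where $m_f$ denotes multiplication by the scalar function $f$ on $L^2(G,\cH)$. To obtain it, I unwind $m_\phi(x) = (\id \otimes \omega_{\xi,\eta})\Delta(x\otimes 1)$ using the explicit action of the unitary $W$ on simple tensors: writing $f^h(s):=f(sh)$, this yields $\langle m_\phi(x)\alpha,\beta\rangle = \int_G\langle x\,m_{\xi^h}\alpha,\,m_{\eta^h}\beta\rangle\dh$, with integrability of matrix coefficients guaranteed by left-invariance of the Haar measure ($\int\|m_{f^h}\alpha\|^2\dh = \|f\|_2^2\|\alpha\|^2$). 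The displayed form then follows from the intertwining $\rho_G(h)\,m_f\,\rho_G(h^{-1}) = m_{f^h}$ together with the commutation $[x,\rho_G(h)] = 0$, valid because $\rho_G(h) \in (Q\rtimes G)'$ by \eqref{rhoG}.

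For (i)$\Rightarrow$(ii) I argue the contrapositive: if $P(g\Omega)xP(\Omega)=0$ for some nonempty open $\Omega$, specialize the identity to $\xi = 1_\Omega$ and $\eta = 1_{g\Omega}$, so that $m_{\overline\eta}\,x\,m_\xi = P(g\Omega)xP(\Omega) = 0$; thus $m_\phi(x)=0$ while $\phi(g) = \langle \lambda_G(g)1_\Omega,1_{g\Omega}\rangle = m_G(g\Omega) > 0$, showing $g\notin\supp(x)$.

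For (ii)$\Rightarrow$(i), also by contrapositive: given $\phi_0 \in A(G)$ with $\phi_0(g)\neq 0$ and $m_{\phi_0}(x)=0$, I exploit that $\{\psi \in A(G) : m_\psi(x)=0\}$ is an ideal (since $m_{\phi\psi} = m_\psi\circ m_\phi$) and the regularity of $A(G)$ in the sense of Eymard \cite{E64}: for any open neighborhood $V$ of $g$ there is a bump $\chi\in A(G)$ with $\chi(g)=1$ and $\supp\chi\subset V$, and replacing $\phi_0$ by $\phi_0\chi$ reduces to the case where $\phi_0$ is compactly supported arbitrarily close to $g$. A local-inversion argument in $A(G)$ then lets one pick a sufficiently small nonempty open $\Omega$ with $g\Omega\Omega^{-1}\subset V\cap\{\phi_0\neq 0\}$ so that $1_\Omega\ast\widetilde{1_{g\Omega}} \in \phi_0\,A(G)$; hence $m_{1_\Omega\ast\widetilde{1_{g\Omega}}}(x)=0$, and the integral formula gives
\begin{equation*}
\int_G \rho_G(h)\bigl[\,P(g\Omega)\,x\,P(\Omega)\,\bigr]\rho_G(h^{-1})\dh = 0.
\end{equation*}
A final shrinking argument using strong continuity of $\rho_G$ (the integrand effectively lives in $h\in\Omega^{-1}\Omega$ and becomes diagonal in $h$ as $\Omega$ shrinks) then forces $P(g\Omega)xP(\Omega) = 0$. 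The main technical obstacle here is precisely this combination of the local-inversion step in $A(G)$ with the concluding limit, both of which rely on the Tauberian/regular nature of the Fourier algebra.
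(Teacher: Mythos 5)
Your integral formula
\begin{equation*}
m_{\xi\ast\tilde\eta}(x)\;=\;\int_G \rho_G(h)\,\bigl[m_{\overline\eta}\,x\,m_\xi\bigr]\,\rho_G(h)^{-1}\dh
\end{equation*}
is correct (the intertwining $\rho_G(h)m_f\rho_G(h)^{-1}=m_{f^h}$ and the integrability estimate both check out), and it yields a clean, genuinely different proof of (i)$\Rightarrow$(ii): after shrinking $\Omega$ to have finite measure, the choice $\xi=\mathbf 1_\Omega$, $\eta=\mathbf 1_{g\Omega}$ kills the integrand outright and $\phi(g)=m_G(g\Omega)>0$. That half is fine and arguably slicker than the paper's.

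The problem is the last step of (ii)$\Rightarrow$(i). The reduction, via regularity and local inversion in $A(G)$, to $m_{\mathbf 1_\Omega\ast\widetilde{\mathbf 1_{g\Omega}}}(x)=0$ is legitimate (the paper invokes the same local inverse), but from
\begin{equation*}
0=\int_G \rho_G(h)\bigl[P(g\Omega)xP(\Omega)\bigr]\rho_G(h)^{-1}\dh=\int_G P(g\Omega h^{-1})\,x\,P(\Omega h^{-1})\dh
\end{equation*}
you cannot conclude $P(g\Omega)xP(\Omega)=0$ by the shrinking argument you describe. The integrand does \emph{not} vanish for $h\notin\Omega^{-1}\Omega$; it is just a translated corner of $x$. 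If you compress by $P(g\Omega)$ and $P(\Omega)$ to confine things to $h\in\Omega^{-1}\Omega$, you are left with $\int_{\Omega^{-1}\Omega}P(g(\Omega\cap\Omega h^{-1}))\,x\,P(\Omega\cap\Omega h^{-1})\dh=0$, an average over a set of $h$ that is exactly as large as the scale on which the integrand varies, with no positivity anywhere ($x$ is arbitrary, the corners are arbitrary operators), so the various $h$ can cancel one another without the $h=e$ term vanishing. Nothing in the proposal isolates that term, and I do not see how strong continuity of $\rho_G$ alone can. The paper sidesteps the integral here: it takes $\phi'\in A(G)$ equal to $1$ on a neighbourhood $V\supset g\Omega\cdot\Omega^{-1}$ and proves the \emph{exact} operator identity $P(g\Omega)m_{\phi'}(y)P(\Omega)=P(g\Omega)yP(\Omega)$ for all $y\in Q\rtimes G$, by checking it on the elements $au_h$ (for $h\notin V$ both sides vanish since $g\Omega\cap h\Omega=\emptyset$) and extending by linearity, normality and density. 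If you replace your limiting argument by this identity, applied with $\phi'=\phi_0\chi$ for a local inverse $\chi$ so that $m_{\phi'}(x)=m_\chi(m_{\phi_0}(x))=0$, the gap closes and the rest of your write-up stands.
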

\begin{proof}
(i) $\Rightarrow$ (ii). Consider a non-empty open set $\Omega \subset G$. Take a non-empty open subset $\Omega_0 \subset \Omega$ and an open neighborhood $V$ of $g$ in $G$ such that $V\Omega_0 \subset g\Omega$. Fix a function $\phi \in A(G)$ supported on $V$ such that $\phi(g) \neq 0$.
Then for all $y \in LG$, we have $m_\phi(y)P(\Omega_0) = P(g\Omega)m_\phi(y)P(\Omega_0)$. Indeed the formula is clear for all $y$ of the form $au_h$, $a \in Q$, $h \in G$, and follows for arbitrary $y$ by linearity and density.

Note that $(Q \rtimes G)' L^2(\Omega_0,\cH)$ spans a dense subset of $L^2(G,\cH)$. Indeed, for all $g \in G$, we see that $\rho_G(g)L^2(\Omega_0,\cH) = L^2(\Omega_0g,\cH)$, where $\rho_G(g)$ is defined in \eqref{rhoG}. If follows that $m_\phi(x)P(\Omega_0) \neq 0$.

Extend arbitrarily $\phi \in A(G) = (LG)_*$ to a linear functional $\tilde{\phi}$ on $B(L^2(G))$. With the notations from Section \ref{section:fourier}, since $W$ commutes with $L^\infty(G) \ot 1$, we have
\begin{align*}
(\id \ot \tilde{\phi}) \circ \Ad(W^*)(P(g\Omega)xP(\Omega_0) \ot 1) & = P(g\Omega)(\id \ot \phi) \circ \Ad(W^*)(x \ot 1)P(\Omega_0)\\
& = P(g\Omega)m_\phi(x)P(\Omega_0)\\
& = m_\phi(x)P(\Omega_0) \neq 0.
\end{align*}
We deduce that $P(g\Omega)xP(\Omega_0) \neq 0$, and in particular $P(g\Omega)xP(\Omega) \neq 0$.

(ii) $\Rightarrow$ (i). Fix $\phi \in A(G)$ such that $\phi(g) \neq 0$. Pick an open neighborhood $V$ of $g$ on which $\phi$ does not vanish. Take $\psi \in A(G)$ such that $\psi(h) = 1/\phi(h)$ for all $h \in V$, and put $\phi' := \phi \psi$. In particular we have $\phi'(h) = 1$ for all $h \in V$, and it suffices to show that $m_{\phi'}(x) \neq 0$, since $m_{\phi'} = m_\psi \circ m_\phi$.

Pick a non-empty open set $\Omega \subset G$ such that $g\Omega \cdot \Omega^{-1} \subset V$. We claim that for all $y \in Q \rtimes G$
\[P(g\Omega)m_{\phi'}(y)P(\Omega) = P(g\Omega)yP(\Omega).\]
By linearity and density, it suffices to check this formula for all $y$ of the form $au_h$, $a \in Q$, $h \in G$. 
If $h \in V$, then $m_{\phi'}(au_h) = au_h$ for all $a \in Q$, so the formula is obvious. If $h \notin V$, and $y = au_h$ for some $a \in Q$, then both sides of the formula are equal to $0$. Indeed, since $m_{\phi'}$ is a multiplier, the two terms are scalar multiple of each other, and it suffices to check vanishing of the right-hand side. Since $g \Omega \cdot \Omega^{-1} \subset V$ and $h \notin V$, we deduce that $g\Omega \cap h\Omega = \emptyset$. This leads to
\[P(g\Omega)au_hP(\Omega) = P(g\Omega)P(h\Omega)au_h = 0,\]
as wanted. This proves the claimed equality and hence $m_{\phi'}(y) \neq 0$.
\end{proof}

In the sequel it will sometimes appear that one of the above two descriptions will be better suited to work with than the other. We will freely switch between these two points of view to reach the simplest arguments.

Let us record a few properties of the support.

\begin{lem}
Take $x \in Q \rtimes G$. \TFAT
\begin{enumerate}[a)]
\item The support of $x$ is a closed subset of $G$;
\item If $x$ belongs to $\cK(G,Q)$, then $\supp(x)$ coincides with its support as a function on $G$;
\item If $x = u_g$ for some $g \in G$, then $\supp(x) = \{g\}$.
\end{enumerate}
\end{lem}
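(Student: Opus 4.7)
The plan is to handle the three items in turn, using mainly the explicit formula $m_\phi(au_g) = \phi(g)au_g$ recalled in Section \ref{section:fourier}, the continuity of elements of $A(G)$ as functions on $G$, and (for part (b)) the characterization of Proposition \ref{eqsupp} is not even needed.

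For (a), I would show that the complement of $\supp(x)$ is open. If $g_0 \notin \supp(x)$, then by definition there exists $\phi \in A(G)$ with $\phi(g_0) \neq 0$ and $m_\phi(x) = 0$. Since every element of $A(G)$ is a continuous (in fact $C_0$) function on $G$, the set $\{g : \phi(g) \neq 0\}$ is an open neighborhood $V$ of $g_0$. The same $\phi$ then witnesses that every $g \in V$ fails to lie in $\supp(x)$.

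For (b), the key preliminary step is to establish that for $F \in \cK(G,Q)$ and $\phi \in A(G)$,
\[
m_\phi(F) = \phi \cdot F \quad \text{(pointwise product as elements of } \cK(G,Q)\text{).}
\]
This follows by approximating $F$, viewed as $\int_G u_s F(s)\ds \in Q \rtimes G$, by Riemann sums $\sum_i u_{s_i} F(s_i)\Delta_i$ and applying the formula $m_\phi(u_{s_i}F(s_i)) = \phi(s_i)\,u_{s_i}F(s_i)$, using normality and the complete boundedness of $m_\phi$ to pass to the limit. Granted this identity, $m_\phi(F) = 0$ if and only if $\phi \cdot F \equiv 0$. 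If $g$ lies in the topological support of $F$, any $\phi \in A(G)$ with $\phi(g) \neq 0$ is non-zero on an open neighborhood of $g$ which intersects $\supp(F)$ in a non-empty open set, forcing $\phi \cdot F \neq 0$; hence $g \in \supp(x)$. Conversely, if $g$ avoids the topological support of $F$, I choose a continuous compactly supported function $\phi$ with $\phi(g) = 1$ and $\supp(\phi)$ disjoint from $\supp(F)$ (possible by Urysohn's lemma since $G$ is locally compact Hausdorff). Such a $\phi$ lies in $A(G)$ by the density statement recalled in Section \ref{section:fourier}, and satisfies $\phi \cdot F \equiv 0$ while $\phi(g) \neq 0$; so $g \notin \supp(x)$.

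For (c), the identity $m_\phi(u_g) = \phi(g) u_g$ reduces everything to a separation property of $A(G)$. It shows immediately that $g \in \supp(u_g)$: if $\phi(g) \neq 0$ then $m_\phi(u_g) = \phi(g) u_g \neq 0$. For $h \neq g$, Urysohn's lemma again provides a continuous compactly supported function $\phi$ with $\phi(h) \neq 0$ and $\phi(g) = 0$, and this $\phi$ lies in $A(G)$; then $m_\phi(u_g) = 0$, so $h \notin \supp(u_g)$.

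The only technical point worth pausing on is the identity $m_\phi(F) = \phi \cdot F$ in part (b); once it is established, the three conclusions follow by direct manipulations. I do not anticipate any serious obstacle beyond being careful with the Riemann-sum approximation in that intermediate step.
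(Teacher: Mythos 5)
Your proposal is correct and follows essentially the same route as the paper: closedness of the support via continuity of elements of $A(G)$, the pointwise identity $m_\phi(F)=\phi\cdot F$ for $F\in\cK(G,Q)$, and the formula $m_\phi(u_g)=\phi(g)u_g$ combined with the fact that compactly supported continuous functions lie in $A(G)$ and separate points. The paper states parts (b) and (c) more tersely ("the statement easily follows"), and your Urysohn separation arguments are exactly the details being elided.
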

\begin{proof}
a) Consider a net $(g_i)_i$ of elements in $\supp(x)$ that converges to some $g\in G$, and take $\phi \in A(G)$ such that $\phi(g) \neq 0$. Since $\phi$ is continuous, for $i$ large enough we also have $\phi(g_i) \neq 0$. Hence $m_\phi(x) \neq 0$, as desired.

b) If $x$ is a continuous function in $\cK(G,Q)$ and $\psi \in A(G)$ observe that $m_\psi(x)$ is the function in $\cK(G,Q)$ defined by $g \mapsto \psi(g)x(g)$. The statement easily follows.

c) If $x = u_g$, for all $\phi \in A(G)$ we have $m_\phi(x) = \phi(g)x$. The result is then obvious.
\end{proof}

\begin{lem}\label{keylemma}
Consider $x\in Q\rtimes G$ and some open subsets $\Omega_1,\Omega_2\subset G$ satisfying the relation $\Omega_1 \cdot \Omega_2^{-1} \cap \supp(x) = \emptyset$. Then 
\[P(\Omega_1) x P(\Omega_2) = 0.\]
In particular, for any open set $\Omega \subset G$, we have $x P(\Omega) = P(\overline{\supp(x)\Omega}) x P(\Omega)$.
\end{lem}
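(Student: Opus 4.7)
The plan is to prove $P(\Omega_1) x P(\Omega_2) = 0$ by a local-to-global argument, modeled on the proof of (ii)$\Rightarrow$(i) in Proposition~\ref{eqsupp}. For the local step, fix $(g_0, h_0) \in \Omega_1 \times \Omega_2$. Then $g_0 h_0^{-1} \notin \supp(x)$, so by Definition~\ref{support} there is $\phi \in A(G)$ with $\phi(g_0 h_0^{-1}) \neq 0$ and $m_\phi(x) = 0$; repeating the $\psi$-trick in the proof of Proposition~\ref{eqsupp} one can replace $\phi$ by some $\phi' \in A(G)$ with $m_{\phi'}(x) = 0$ and $\phi' \equiv 1$ on an open neighborhood $V$ of $g_0 h_0^{-1}$. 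By continuity of $(g,h) \mapsto gh^{-1}$ together with the openness of $\Omega_1$ and $\Omega_2$, I can then choose open sets $U \subset \Omega_1$ around $g_0$ and $W \subset \Omega_2$ around $h_0$ with $UW^{-1} \subset V$. Running exactly the same verification as in Proposition~\ref{eqsupp}(ii)$\Rightarrow$(i) on generators $y = au_h$ and extending by linearity and density yields the identity $P(U) y P(W) = P(U) m_{\phi'}(y) P(W)$ for all $y \in Q \rtimes G$. Specialized to $y = x$, this gives $P(U) x P(W) = 0$.

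The second step is to globalize. It is enough to show that $\langle x \xi, \eta \rangle = 0$ for vectors $\eta, \xi$ with compact supports $K_1 \subset \Omega_1$ and $K_2 \subset \Omega_2$. For each fixed $g_0 \in K_1$, the open sets $W_{g_0, h}$ (as $h$ varies over $K_2$) cover $K_2$; extracting a finite subcover and intersecting the corresponding $U$'s yields an open neighborhood $U_{g_0}$ of $g_0$ and an open set $\widetilde{W}_{g_0} \supset K_2$ contained in $\Omega_2$ with $P(U_{g_0}) x P(\widetilde{W}_{g_0}) = 0$. A second compactness argument applied to the cover $\{U_{g_0}\}_{g_0 \in K_1}$ of $K_1$ gives finitely many $g_1, \dots, g_q$ with $K_1 \subset \widetilde{U} := \bigcup_i U_{g_i}$; the open set $\widetilde{W} := \bigcap_i \widetilde{W}_{g_i}$ still contains $K_2$. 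Decomposing $\eta = \sum_i \eta_i$ with $\eta_i \in L^2(U_{g_i}, \cH)$ via a partition of unity yields $P(\widetilde{U}) x P(\widetilde{W}) = 0$, and since $P(\widetilde{U}) \eta = \eta$ and $P(\widetilde{W}) \xi = \xi$ one concludes $\langle x \xi, \eta \rangle = 0$, as desired.

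For the last assertion, set $C := \overline{\supp(x) \Omega}$ and $\Omega' := G \setminus C$ (open). A direct unpacking of the definitions shows $\Omega' \cdot \Omega^{-1} \cap \supp(x) = \emptyset$: an equation $g' h^{-1} = s \in \supp(x)$ with $g' \in \Omega'$, $h \in \Omega$ would force $g' = sh \in \supp(x)\, \Omega \subset C$, a contradiction. Applying the main statement gives $P(\Omega') x P(\Omega) = 0$, and since $P(\Omega') = 1 - P(C)$ this rearranges into $x P(\Omega) = P(C)\, x P(\Omega)$.

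The step I expect to be the main obstacle is the globalization, precisely because the local neighborhoods $W$ depend on the base point $g_0$ and cannot be taken uniformly, which forces the double compactness plus partition-of-unity maneuver above; the local step and the \emph{in particular} statement are then essentially formal once the tools of Proposition~\ref{eqsupp} are in hand.
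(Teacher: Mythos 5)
Your proof is correct and follows essentially the same strategy as the paper's: a pointwise local vanishing statement extracted from the definition of the support via the multiplier/local-inverse trick of Proposition~\ref{eqsupp}, followed by compactness arguments to globalize. The differences are only organizational --- the paper gets the local neighborhoods by negating Proposition~\ref{eqsupp}(ii) and translating with $\rho_G$, and runs compactness only on the $\Omega_2$ side (handling $\Omega_1$ by a supremum of projections over all of its points), whereas you rebuild the neighborhoods of both base points directly from the multiplier identity and reduce to compact sets on both sides.
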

\begin{proof}
We first treat the special case where $\Omega_1 \cdot \overline{\Omega_2^{-1}}\cap \supp(x)=\emptyset$ and $\overline{\Omega_2}$ is compact.

For any $(g,h)\in \Omega_1\times\overline{\Omega_2}$, there exists a non-empty open set $V_{g,h}$ such that
$P(gh^{-1} V_{g,h} ) x P(V_{g,h})=0$. Conjugating this equation with $\rho_G(k)$ for some $k \in G$ we can assume that $V_{g,h}$ is an open neighborhood of $h$. Here, recall that $\rho_G$ is the right action defined in \eqref{rhoG}, which satisfies $\rho_G(k)P(V_{g,h})\rho_G(k^{-1}) = P(V_{g,h} \cdot k^{-1})$.

By compactness, for a fixed $g\in G$ there exists $n \geqslant 1, h_1,\cdots, h_n \in \overline{\Omega_2}$ such that 
$\{V_{g,h_i}\}_{i = 1}^n$ is a finite open cover of $\Omega_2$.
We can then define an open neighborhood $W_g$ of $e \in G$:
\[W_g:= \bigcap_{i=1}^n h_i^{-1} V_{g,h_i}\] 
We get that $P(g W_g) x P(V_{g,h_i})=0$ for any $1\leqslant i\leqslant n, g\in \Omega_1$.
Since $P(\overline{\Omega_2})$ is smaller than the supremum of the projections $\vee_{i=1}^n P(V_{g,h_i})$, we obtain that $P(g W_g) x P(\overline{\Omega_2})=0$ for any $g\in \Omega_1$.
Therefore, $P(\Omega_1) x P(\overline{\Omega_2})=0$ since $\vee_{g\in \Omega_1} P(g W_g) \geqslant P(\Omega_1)$.

Suppose now that $\Omega_2$ is open and non-necessarily relatively compact.
For any $g\in \Omega_2$, there exists a compact neighborhood $K_g$ of $g$ such that $\Omega_1\cdot K_g^{-1}\cap \supp(x)=\emptyset$.
We have that $P(\Omega_1) x P(K_g)=0$ for any $g\in G$, by the proof of above.
This proves the desired equality $P(\Omega_1) x P(\Omega_2) = 0$ because $P(\Omega_2) \leq \vee_{g\in \Omega_2} P(K_g)$.

The second part of the statement follows from taking $\Omega_2 = \Omega$, $\Omega_1 = G \setminus \overline{\supp(x)\Omega}$.
\end{proof}

Before mentioning interesting consequences of this lemma, let us give an essentially equivalent form involving the multipliers.

\begin{lem}\label{trivmult}
Consider $x \in Q \rtimes G$ with compact support, and take a function $\phi \in A(G)$ which is equal to $1$ on a neighborhood of $\supp(x)$. Then $x = m_\phi(x)$.
\end{lem}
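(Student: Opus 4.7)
The plan is to show that the element $y := m_\phi(x) - x$ has empty support in the sense of Definition \ref{support}; applying Lemma \ref{keylemma} with $\Omega_1 = \Omega_2 = G$ and the (then vacuous) hypothesis $\Omega_1 \Omega_2^{-1} \cap \supp(y) = \emptyset$ then yields $y = P(G) y P(G) = 0$.

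First I would record two elementary stability properties of the support operation. \emph{Sub-additivity:} if $g \notin \supp(y_1) \cup \supp(y_2)$, pick $\psi_i \in A(G)$ with $\psi_i(g) \neq 0$ and $m_{\psi_i}(y_i) = 0$; then $\psi_1 \psi_2 \in A(G)$ does not vanish at $g$ and
\[m_{\psi_1 \psi_2}(y_1 + y_2) = m_{\psi_2}\bigl(m_{\psi_1}(y_1)\bigr) + m_{\psi_1}\bigl(m_{\psi_2}(y_2)\bigr) = 0,\]
whence $\supp(y_1 + y_2) \subset \supp(y_1) \cup \supp(y_2)$. \emph{Multipliers contract supports:} a similar argument, using the identity $m_\psi \circ m_\phi = m_{\psi\phi} = m_\phi \circ m_\psi$ that follows from commutativity of $A(G)$, shows that $\supp(m_\phi(x)) \subset \supp(x)$. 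Combining these two facts gives $\supp(y) \subset \supp(x) \cup \supp(m_\phi(x)) \subset \supp(x) \subset V$, where $V$ denotes an open neighborhood of $\supp(x)$ on which $\phi \equiv 1$.

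The main step is to show that conversely $\supp(y) \cap V = \emptyset$. Fix $g \in V$. By regularity of the Fourier algebra \cite{E64}, one can choose $\psi \in A(G)$ with $\psi(g) \neq 0$ whose closed support (as a continuous function on $G$) is contained in $V$. Since $\phi = 1$ on $V$ and $\psi = 0$ off $V$, the pointwise product $\psi \phi$ equals $\psi$ as a function on $G$, hence as an element of $A(G)$. Consequently,
\[m_\psi(y) = m_\psi(m_\phi(x)) - m_\psi(x) = m_{\psi \phi}(x) - m_\psi(x) = 0,\]
proving that $g \notin \supp(y)$. Combining with the previous paragraph gives $\supp(y) = \emptyset$, and the conclusion follows.

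The only nontrivial input is the regularity of $A(G)$, which I invoke at a single point to produce the cut-off function $\psi$; this is a classical result of Eymard and should pose no difficulty. Everything else is routine manipulation inside the Fourier multiplier calculus set up in Section \ref{section:fourier}, so I do not expect any serious obstacle.
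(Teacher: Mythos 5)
Your proof is correct, but it follows a genuinely different route from the paper's. The paper argues spatially: it chooses a small open set $\Omega$ with $\phi \equiv 1$ on $\overline{\supp(x)\Omega}\cdot\Omega^{-1}$ (this is where compactness of $\supp(x)$ is actually used), shows $xP(\Omega) = m_\phi(x)P(\Omega)$ by the matrix-coefficient computation from the proof of Proposition \ref{eqsupp}, and then concludes by translating $\Omega$ on the right and taking the supremum of the projections $P(\Omega g)$. You instead stay entirely inside the multiplier calculus: you show $y = m_\phi(x) - x$ has empty support by combining sub-additivity and contraction of supports under multipliers (both of which you rederive correctly, so there is no circularity with Corollary \ref{cor:prop-supp}, which only appears later) with a cut-off $\psi \in A(G)$ supported in the neighborhood $V$ where $\phi \equiv 1$, for which $\psi\phi = \psi$ in $A(G)$ and hence $m_\psi(y) = 0$. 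The one external input you need, regularity of $A(G)$, is indeed a classical theorem of Eymard, and is of the same nature as the local-inverse trick the paper already uses in Proposition \ref{eqsupp}, so it is a fair tool to invoke. A notable dividend of your approach is that it never uses compactness of $\supp(x)$: your argument proves the statement for any $x$ whose support is contained in an open set on which $\phi$ is identically $1$, whereas the paper's choice of a uniform $\Omega$ genuinely relies on compactness. The trade-off is that the paper's proof reuses only machinery already established in the section, while yours imports one additional (standard) fact about the Fourier algebra.
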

\begin{proof}
Take a non-empty open set $\Omega \subset G$ such that $\phi$ is equal to $1$ on $\overline{\supp(x)\Omega} \cdot \Omega^{-1}$.
Proceeding as in the proof of Proposition \ref{eqsupp}, (ii) $\Rightarrow$ (i), one checks that 
\[P(\overline{\supp(x)\Omega})xP(\Omega) = P(\overline{\supp(x)\Omega})m_\phi(x)P(\Omega).\]
Since $\supp(m_\phi(x)) \subset \supp(x)$, Lemma \ref{keylemma} and the above equality imply that \[xP(\Omega) = m_\phi(x)P(\Omega).\]
Moreover, for all $g \in G$, the set $\Omega g$ satisfies the same condition as $\Omega$. So the same equality holds with $\Omega g$ in the place of $\Omega$, for all $g \in G$: $xP(\Omega g) = xP(\Omega g)$.
Since $\Omega$ is open inside $G$, we get that $1 = \vee_g P(\Omega g)$. The equality $x = m_\phi(x)$ follows.
\end{proof}

\begin{cor}\label{cor:prop-supp}
Consider $x,y\in Q\rtimes G$.
\TFAT
\begin{enumerate}[a)]
\item Adjoint: $\supp(x^*) = \supp(x)^{-1}$;
\item Sum: $\supp(x+y)\subset \supp(x)\cup \supp(y)$;
\item Product: $\supp(xy)\subset \overline{\supp(x) \cdot \supp(y)}$. 
\item Vanishing criterion: If $\supp(x)=\emptyset$, then $x=0$.
\end{enumerate}
\end{cor}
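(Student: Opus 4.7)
The plan is to treat each of the four assertions in turn, using whichever characterization of support is most convenient: Definition \ref{support} (via multipliers) or Proposition \ref{eqsupp} (via projections), together with the second part of Lemma \ref{keylemma}. Parts (a), (b), (d) are essentially direct; the real content lies in (c).

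For the adjoint (a), I would invoke Proposition \ref{eqsupp}: $g \in \supp(x^*)$ iff $P(g\Omega)x^*P(\Omega) \neq 0$ for every non-empty open $\Omega$. Taking adjoints converts this to $P(\Omega)xP(g\Omega) \neq 0$, and relabeling $\Omega' := g\Omega$ shows this is equivalent to $g^{-1} \in \supp(x)$. For the sum (b), I would use Definition \ref{support}: if $g \notin \supp(x) \cup \supp(y)$, pick $\phi_x,\phi_y \in A(G)$ with $\phi_x(g),\phi_y(g) \neq 0$ and $m_{\phi_x}(x) = m_{\phi_y}(y) = 0$; then $\phi := \phi_x \phi_y$ satisfies $\phi(g) \neq 0$, and since multipliers commute with composition law $m_{\phi\psi} = m_\phi \circ m_\psi$ (verified on generators $au_h$), $m_\phi$ annihilates both $x$ and $y$. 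For the vanishing criterion (d), it suffices to specialize the second assertion of Lemma \ref{keylemma}: if $\supp(x) = \emptyset$ then $\supp(x)\Omega = \emptyset$ for every open $\Omega$, so $xP(\Omega) = P(\emptyset)xP(\Omega) = 0$; taking $\Omega = G$ yields $x = 0$.

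The main work is the product (c). Given $g \notin \overline{\supp(x) \cdot \supp(y)}$, my goal is to exhibit a non-empty open $\Omega$ with $P(g\Omega)xyP(\Omega) = 0$, whence $g \notin \supp(xy)$ via Proposition \ref{eqsupp}. I would choose an open neighborhood $V_0$ of $g$ disjoint from $\supp(x)\supp(y)$, then by continuity of multiplication pick a symmetric open neighborhood $U$ of $e$ with $gU^3 \subset V_0$, and set $\Omega = V = U$. After rearrangement this gives $g\Omega \cap \supp(x)\supp(y)\Omega V = \emptyset$. Now apply Lemma \ref{keylemma} twice: first, $yP(\Omega) = P(\overline{\supp(y)\Omega})yP(\Omega)$; the topological fact that $\overline{\supp(y)\Omega} \subset \supp(y)\Omega \cdot V$ (any limit point $h$ has $hV^{-1}$ meeting $\supp(y)\Omega$) gives $P(\overline{\supp(y)\Omega}) \leq P(\supp(y)\Omega V)$. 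Second, since $\supp(y)\Omega V$ is open, Lemma \ref{keylemma} applied to $x$ yields $xP(\supp(y)\Omega V) = P(\overline{\supp(x)\supp(y)\Omega V})xP(\supp(y)\Omega V)$. Chaining the two identities gives $xyP(\Omega) = P(\overline{\supp(x)\supp(y)\Omega V})xyP(\Omega)$. Since $g\Omega$ is open and disjoint from $\supp(x)\supp(y)\Omega V$, it is also disjoint from its closure, so $P(g\Omega)P(\overline{\supp(x)\supp(y)\Omega V}) = 0$ and therefore $P(g\Omega)xyP(\Omega) = 0$.

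The main obstacle is exactly the product case: the closed set $\overline{\supp(y)\Omega}$ produced by the first application of Lemma \ref{keylemma} cannot be fed back into it directly, since the lemma requires open arguments. This forces the introduction of the auxiliary open neighborhood $V$ of $e$ and the threefold shrinking $gU^3 \subset V_0$, whose role is precisely to absorb the buffer lost when enlarging $\overline{\supp(y)\Omega}$ to the open set $\supp(y)\Omega V$.
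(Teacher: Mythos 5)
Your proof is correct, and for parts (a), (b) and (d) it matches the paper's argument almost verbatim (the paper does (a) via the identity $(m_\phi(x))^* = m_{\phi^*}(x^*)$ with $\phi^*(g)=\overline{\phi(g^{-1})}$ rather than via Proposition \ref{eqsupp}, but the two are interchangeable). The only genuine divergence is in the product case (c), and your variant is sound. The paper applies Lemma \ref{keylemma} once to $y$ and once to $x^*$, producing two \emph{closed} sets $X=\overline{\supp(x)^{-1}g\Omega}$ and $Y=\overline{\supp(y)\Omega}$ which are shown to be disjoint because points of $\supp(x)^{-1}g\Omega$ and $\supp(y)\Omega$ are uniformly $V$-separated; the conclusion then comes from $P(X)P(Y)=P(X\cap Y)=0$. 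You instead apply the lemma twice in the forward direction, to $y$ and then to $x$, which forces you to confront exactly the obstacle you identify: the closure $\overline{\supp(y)\Omega}$ is not open, so it cannot be fed back into the lemma. Your fix --- enlarging it to the open set $\supp(y)\Omega V$ at the cost of shrinking the initial neighborhood to satisfy $gU^3\subset V_0$ --- is correct, and the final step (an open set disjoint from a set is disjoint from its closure) closes the argument. The paper's adjoint trick is marginally slicker in that it avoids the buffer bookkeeping; your version has the small advantage of never invoking part (a) or the behaviour of the support under adjoints inside the proof of (c).
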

\begin{proof}
a) This results from the fact that for all $\phi \in A(G)$, $(m_\phi(x))^* = m_{\phi^*}(x^*)$, where $\phi^* \in A(G)$ is defined by $\phi^*(g) = \overline{\phi(g^{-1})}$, $g \in G$.

b) Consider $g$ in the complementary of $\supp(x)\cup \supp(y)$. Then there exists $\phi_1$ and $\phi_2$ such that $\phi_i(g)\neq 0$, $i = 1,2$, while $m_{\phi_1}(x) = 0$ and $m_{\phi_2}(y) = 0$. In particular the product $\phi := \phi_1 \phi_2$ satisfies $\phi(g) \neq 0$, while $m_\phi(x) = m_{\phi_2} \circ m_{\phi_1}(x) = 0$ and $m_\phi(y) = m_{\phi_1} \circ m_{\phi_2}(y) = 0$. In summary $m_\phi(x+y)=0$ and hence $g \notin \supp(x+y)$.

c) Consider an element $g\in G$ which is not in $\overline{\supp(x) \cdot \supp(y)}$ and take open neighborhoods of the identity $e \in G$, $\Omega$ and $V$, such that $\supp(x) \cdot \supp(y) \cap g\Omega V \Omega^{-1} = \emptyset$.
In particular, for all $h \in \supp(x)^{-1}g\Omega$ and $k \in \supp(y)\Omega$ we have $h^{-1}k \notin V$.
Further, the closures $X := \overline{\supp(x^*)g\Omega}$ and $Y := \overline{\supp(y)\Omega}$ do not intersect. 
Besides, Lemma \ref{keylemma} implies that
\begin{itemize}
\item $yP(\Omega) = P(Y)yP(\Omega)$;
\item $x^*P(g\Omega) = P(X)x^*P(g\Omega)$.
\end{itemize}
Altogether, the following equality shows that $g \notin \supp(xy)$:
\[P(g\Omega)xyP(\Omega) = P(g\Omega)xP(X)P(Y)yP(\Omega) =  P(g\Omega)xP(X \cap Y)yP(\Omega) = 0.\]

d) If $\supp(x)=\emptyset$, then Lemma \ref{keylemma} implies that $x = P(G) x P(G) = 0$.
\end{proof}

\begin{thm}\label{sgpthm}
Consider an action of a locally compact group $G \actson Q$ on an arbitrary von Neumann algebra and take a closed subgroup $H < G$.

Then an element $x \in Q \rtimes G$ belongs to the subalgebra $Q \rtimes H$ if and only if its support is contained in $H$. 
\end{thm}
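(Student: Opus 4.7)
The plan is to handle the two directions separately; the ``only if'' is straightforward, while the ``if'' requires more work, with the main obstacle being boundary terms.

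For the ``only if'' direction, I would use Theorem~\ref{commutant subgroup}, which identifies $Q\rtimes H$ with $(Q\rtimes G)\cap \cL^\infty(H\backslash G)'$. Thus $x \in Q\rtimes H$ implies that $x$ commutes with $P(B)$ for every open left-$H$-invariant $B \subset G$. Given $g \in G \setminus H$, the closedness of $H$ together with continuity of multiplication provides an open neighborhood $\Omega$ of the identity with $g\Omega\Omega^{-1} \cap H = \emptyset$, hence $g\Omega \cap H\Omega = \emptyset$. The set $H\Omega$ is open and left-$H$-invariant, so $x$ commutes with $P(H\Omega)$; using $\Omega \subset H\Omega$ and $g\Omega \cap H\Omega = \emptyset$, one concludes $P(g\Omega)\,x\,P(\Omega) = P(g\Omega) P(H\Omega) x P(\Omega) = 0$, whence $g \notin \supp(x)$ by Proposition~\ref{eqsupp}.

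For the ``if'' direction, assume $\supp(x) \subset H$; the plan is again to show $x$ commutes with $\cL^\infty(H\backslash G)$. Let $\pi\colon G \to H\backslash G$ be the quotient map, continuous and open since $H$ is closed. Since $\{f \circ \pi : f \in C_c(H\backslash G)\}$ is weak-$*$ dense in $\cL^\infty(H\backslash G)$, it suffices to treat $f \in C_c(H\backslash G)_+$. Via the layer-cake decomposition $f \circ \pi = \int_0^{\|f\|_\infty} \chi_{B_t}\,dt$ with $B_t := \pi^{-1}(\{f > t\})$ open and left-$H$-invariant, the proof reduces to showing $[x, P(B_t)] = 0$ for Lebesgue-a.e.~$t$. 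The key observation is that $(G\setminus \overline{B_t})\cdot B_t^{-1}\cap H = \emptyset$: if $h = vb^{-1}$ with $v\notin \overline{B_t}$ and $b\in B_t$, then $v = hb \in HB_t = B_t$, a contradiction. Since $\supp(x) \subset H$ and both $G \setminus \overline{B_t}$ and $B_t$ are open, Lemma~\ref{keylemma} yields $P(G\setminus \overline{B_t})\,x\,P(B_t) = 0$; applying the same to $x^*$ (whose support is $\supp(x)^{-1} \subset H$) and taking adjoints gives $P(B_t)\,x\,P(G\setminus \overline{B_t}) = 0$. Together,
\[xP(B_t) = P(\overline{B_t})\,x\,P(B_t), \qquad P(B_t)\,x = P(B_t)\,x\,P(\overline{B_t}).\]

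The main obstacle is that the boundary $\partial B_t = \overline{B_t} \setminus B_t$ may carry positive Haar measure, in which case the two displayed equalities do not yet imply commutation. My plan is to circumvent this with a Sard-type argument on the quotient: $\partial B_t \subset \pi^{-1}(\{f = t\})$, and the level sets $\{f = t\} \subset H\backslash G$ are pairwise disjoint Borel subsets of the compact set $\supp(f)$. Hence the quasi-invariant measure $\mu$ on $H\backslash G$ satisfies $\sum_t \mu(\{f = t\}) \leq \mu(\supp(f)) < \infty$, so $\mu(\{f = t\}) = 0$ for Lebesgue-a.e.~$t$; equivalently $m_G(\partial B_t) = 0$ for a.e.~$t$. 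For such $t$, $P(\overline{B_t}) = P(B_t)$, which combined with the displayed equalities yields $[x, P(B_t)] = 0$. Integrating over $t$ then gives $[x, M_{f\circ\pi}] = 0$, finishing the proof.
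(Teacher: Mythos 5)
Your proof is correct and follows essentially the same route as the paper's: both directions come down to Theorem~\ref{commutant subgroup} together with Lemma~\ref{keylemma} applied to left-$H$-invariant open sets whose boundary is Haar-null. The only (harmless) deviations are that you derive the ``only if'' direction directly from commutation with $P(H\Omega)$ rather than from Lemma~\ref{keylemma}, and that you replace the paper's appeal to a ``classical fact'' -- density in $\cL^\infty(H\backslash G)$ of the span of indicators of open sets with $\mu$-null boundary -- by an explicit layer-cake argument showing $\mu(\{f=t\})=0$ for a.e.\ $t$; both substitutions are sound.
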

\begin{proof}
First, assume that $x \in Q \rtimes H$. For all $g \in G \setminus H$ we can find a non-empty open set $\Omega \subset G$ such that $g \Omega \cdot \Omega^{-1} \cap H = \emptyset$. By Lemma \ref{keylemma}, we deduce that $P(g\Omega)xP(\Omega) = 0$, so that $g \notin \supp(x)$. Hence $\supp(x) \subset H$.

Conversely assume that $x \in Q \rtimes G$ is an element with support contained in $H$. In order to show that $x \in Q \rtimes H$, we will use Theorem \ref{commutant subgroup}. This reduces our task to check that $x$ commutes inside $B(L^2(G,\cH))$ with the subalgebra $\cL^\infty(H\backslash G) \subset L^\infty(G)$ consisting of left $H$-invariant functions on $G$. 

Given a left $H$-invariant open set $\Omega$ whose boundary has measure $0$, we have equality of the projections  $P(\Omega) = P(\overline{H\Omega})$. By Lemma \ref{keylemma}, we get $xP(\Omega) = P(\Omega)xP(\Omega)$. Since $x^*$ also has support in $H$ we moreover have $x^*P(\Omega) = P(\Omega)x^*P(\Omega)$. This shows that $x$ commutes with $P(\Omega) \in \cL^\infty(H \backslash G)$. 

{\bf Claim.} The set of functions $P(\Omega)$ with $\Omega$ as above generates $\cL^\infty(H \backslash G)$.

Denote by $q: G \to H \backslash G$ the canonical projection and by $\mu$ a quasi-invariant measure on the coset space $H \backslash G$. The map $f \in L^\infty(H\backslash G,\mu) \mapsto f \circ q \in \cL^\infty(H\backslash G)$ is a normal isomorphism. With this identification, the indicator function $1_U$ of a set $U \subset H \backslash G$ is identified with the indicator function $1_{q^{-1}(U)} = P(q^{-1}(U)) \in \cL^\infty(H\backslash G)$.

Moreover, for all open subset $U \subset H \backslash G$ such that $\mu(\partial U) = 0$, the set $\Omega := q^{-1}(U) \subset G$ is open, $H$-invariant and its boundary has Haar measure $0$. So we are left to check that the span of functions $1_U$ for $U \subset H \backslash G$ such that $\mu(\partial U) = 0$, is ultraweakly dense in $L^\infty(H \backslash G)$. This is a classical fact about Borel measures on locally compact spaces (see e.g. \cite[Proof of (3.33)]{E64} and the references therein).

We deduce from the claim that $x$ commutes with $\cL^\infty(H \backslash G)$. So as wanted, we can use Theorem \ref{commutant subgroup} to deduce that $x \in Q \rtimes H$.
\end{proof}

The special case where $H$ is the trivial subgroup of $G$, the above theorem yields Beurling's Theorem, \cite[Th\'eor\`eme 4.9]{E64}. In particular we stress the following corollary that we will use several times.

\begin{cor}\label{Beurling}
Given an element $x$ in a crossed product von Neumann algebra $Q \rtimes G$, if the support of $x$ is a singleton $\{g\}$, then there exists $a \in Q$ such that $x = au_g$.
\end{cor}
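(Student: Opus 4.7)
The plan is to reduce the statement to Theorem \ref{sgpthm} by right-translating the element $x$ to make its support collapse onto the neutral element.

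First, I would multiply $x$ on the right by $u_{g^{-1}}$ and set $y := x u_{g^{-1}}$. By the product rule for supports in Corollary \ref{cor:prop-supp}~c), together with the fact that $\supp(u_{g^{-1}}) = \{g^{-1}\}$, we obtain
\[
\supp(y) \subseteq \overline{\supp(x)\cdot\supp(u_{g^{-1}})} = \overline{\{g\}\cdot\{g^{-1}\}} = \{e\}.
\]

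Next, $\{e\}$ is a closed subgroup of $G$, so Theorem \ref{sgpthm} applies and yields $y \in Q \rtimes \{e\} = Q$. Thus there exists $a \in Q$ with $y = a$, and therefore $x = y u_g = a u_g$, as claimed.

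There is really no obstacle here: once Theorem \ref{sgpthm} and the product estimate of Corollary \ref{cor:prop-supp} are available, the argument is a one-line computation. The only minor point worth double-checking is that $u_{g^{-1}}$ genuinely has support exactly $\{g^{-1}\}$, but this is the content of item c) of the lemma listing first properties of the support.
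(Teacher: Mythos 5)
Your argument is correct and is essentially the paper's own proof: the paper multiplies by $u_g^*$ on the left and notes $\supp(u_g^*x)=\{e\}$, then applies Theorem \ref{sgpthm} to the trivial subgroup, while you multiply by $u_{g^{-1}}$ on the right and justify the support computation explicitly via Corollary \ref{cor:prop-supp}~c). The difference is purely cosmetic (left versus right translation, and whether $a$ sits to the left or right of $u_g$ before commuting it through).
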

\begin{proof}
If $\supp(x) = \{g\}$, then $\supp(u_g^*x) = \{e\}$. In particular $u_g^*x \in Q$, by Theorem \ref{sgpthm}.
\end{proof}

\subsection{Applications}

Before moving on to the proof of our main theorems, let us mention a few classical results that follow easily from the above properties of the support. 

Our first application concerns a generalization of the so-called Eymard-Steinespring-Tatsuuma's Theorem, \cite[Theorem VII.3.9]{Ta03}.
We start with analyzing how the support behaves under the co-product map.

Given an action $\sigma: G \actson Q$, put $M = Q \rtimes G$ and consider the notations $W \in B(L^2(G\times G,\cH))$ and $\Delta = \Ad(W^*): M \ot 1 \to M \ovt LG$ introduced in Section \ref{section:fourier}. We will view the algebra $M \ovt LG$ as the crossed-product von Neumann algebra associated with the action $\sigma \times \id: G \times G \actson Q$:
\[M \ovt LG \simeq Q \rtimes (G \times G).\]
In this way it makes sense to talk about the support of an element inside $M \ovt LG$.

\begin{lem}\label{Deltasupp}
With the above notations, the following fact hold.
\begin{enumerate}[a)]
\item For all $x \in M$, the support $\Delta(x \ot 1)$ is equal to $\{(g,g) \, , \, g \in \supp(x)\}$;
\item For all $a \in M$ and $b \in LG$, $\supp(a \ot b) =  \supp(a) \times \supp(b)$.
\end{enumerate}
\end{lem}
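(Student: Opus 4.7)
For part (a), the plan is to first compute $\Delta$ on generators: using the explicit formula $\Delta = \Ad(W^*)$ and $(W\xi)(s,t) = \xi(s,st)$, a direct calculation gives $\Delta(\pi(a) u_g \otimes 1) = \pi(a) u_g \otimes \lambda_g$ for $a \in Q$, $g \in G$. Under the identification $M \ovt LG \simeq Q \rtimes (G \times G)$ (with $G \times G$ acting via $\sigma \times \id$), the right-hand side corresponds to $\pi_{G\times G}(a) u_{(g,g)}$, an element of the subalgebra $Q \rtimes \mathrm{Diag}$, where $\mathrm{Diag} := \{(g,g) : g \in G\}$ is the diagonal closed subgroup. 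By normality of $\Delta$ and ultraweak density of the span of such generators in $M$, one deduces $\Delta(M \otimes 1) \subset Q \rtimes \mathrm{Diag}$. Theorem \ref{sgpthm} then yields $\supp(\Delta(x \otimes 1)) \subset \mathrm{Diag}$ for all $x \in M$.

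To match the support precisely with the diagonal image of $\supp(x)$, observe that $\Delta|_{M \otimes 1}$ is the canonical isomorphism $M \cong Q \rtimes \mathrm{Diag}$ sending $a u_g$ to $\pi_{G\times G}(a) u_{(g,g)}$ (checked on generators, extended by normality), under which $\Delta(x \otimes 1) \leftrightarrow x$. The remaining step is the auxiliary fact that for a closed subgroup $H < G$ and $y \in Q \rtimes H$, the supports computed in $Q \rtimes H$ and in $Q \rtimes G$ coincide as subsets of $H$. This follows from the multiplier characterization: for $\phi \in A(G)$, $m_\phi$ restricts on $Q \rtimes H$ to $m_{\phi|_H}$, and conversely Herz's restriction theorem asserts surjectivity of $\phi \mapsto \phi|_H$ from $A(G)$ to $A(H)$, so any test function in one side lifts to the other. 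Combining these gives $\supp(\Delta(x\otimes 1)) = \{(g,g) : g \in \supp(x)\}$.

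For part (b), I would use the projection characterization from Proposition \ref{eqsupp}. Since $a \otimes b$ acts as a tensor on $L^2(G,\cH) \otimes L^2(G)$ and product-set projections factor as $P(\Omega_1 \times \Omega_2) = P_1(\Omega_1) \otimes P_2(\Omega_2)$, a direct computation gives
\[P(g\Omega_1 \times h\Omega_2)(a \otimes b) P(\Omega_1 \times \Omega_2) = \bigl[P_1(g\Omega_1)\, a\, P_1(\Omega_1)\bigr] \otimes \bigl[P_2(h\Omega_2)\, b\, P_2(\Omega_2)\bigr],\]
which is nonzero if and only if both tensor factors are nonzero. Applying Proposition \ref{eqsupp} separately to $a \in M$ and to $b \in LG$, this occurs for all nonempty open $\Omega_1, \Omega_2$ if and only if $g \in \supp(a)$ and $h \in \supp(b)$. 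Since product open sets form a basis of the topology on $G \times G$, testing on them is sufficient to characterize the support of $a \otimes b$.

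The main obstacle is in part (a): reconciling the support of $\Delta(x \otimes 1)$ viewed inside $Q \rtimes (G \times G)$ with its support as an element of the subcrossed-product $Q \rtimes \mathrm{Diag}$. This naturally leads to invoking Herz's restriction theorem, which is not explicitly quoted earlier in the paper. An alternative route is a direct projection calculation: one computes $W^*P(A)W = m_{(s,t) \mapsto 1_A(s,s^{-1}t)}$, expresses the operator $P((g,h)\Omega)\Delta(x\otimes 1) P(\Omega)$ as a direct integral over $t$ of projections of the form $P_1(g\Omega_1 \cap h\Omega_2 t^{-1})\, x\, P_1(\Omega_1 \cap \Omega_2 t^{-1})$, and analyzes when this integrand is nonzero on a set of positive measure. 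This direct approach avoids Herz but is computationally heavier.
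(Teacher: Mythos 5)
Your proof is correct, but for part (a) it follows a genuinely different route from the paper's. The paper never leaves the multiplier calculus: it establishes the single identity $m_{\phi \times \psi}(\Delta(x \ot 1)) = \Delta(m_{\phi \cdot \psi}(x) \ot 1)$ for $\phi, \psi \in A(G)$, deduces that $\supp(\Delta(x \ot 1))$ lies in the diagonal by taking $\phi, \psi$ with disjoint supports, and handles an arbitrary test function $\rho \in A(G \times G)$ in the reverse inclusion by multiplying $\rho$ by a local inverse so that it equals $1$ near $(g,g)$ and then composing with $m_{\phi \times \phi}$. You instead identify $\Delta(M \ot 1)$ with the diagonal subalgebra $Q \rtimes \mathrm{Diag} \subset Q \rtimes (G \times G)$ and reduce to the general fact that the support of an element of $Q \rtimes H$ is the same whether computed intrinsically or in the ambient algebra; the nontrivial half of that fact needs surjectivity of the restriction map $A(G\times G) \to A(\mathrm{Diag})$, i.e.\ Herz's restriction theorem. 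This is correct but imports a substantially heavier external result than anything the paper uses -- the paper's local-inverse trick is exactly a local substitute for Herz that keeps the section self-contained. (In fact Herz is avoidable even within your framework: the inclusion of $\supp(\Delta(x\ot 1))$ into the diagonal image of $\supp(x)$ can be obtained with product functions $\phi\times\psi$ alone, and for the reverse inclusion one only needs that restrictions of $A(G\times G)$-functions to the diagonal lie in $A(G)$, not surjectivity.) Your use of Theorem \ref{sgpthm} to place the support inside the diagonal is legitimate and non-circular, since that theorem precedes this lemma. For part (b) your argument essentially coincides with the paper's: the paper proves one inclusion with the multipliers $m_{\phi\times\psi}$ and explicitly offers the projection computation you use for the other; the one point worth spelling out in your version is that testing condition (ii) of Proposition \ref{eqsupp} on product open sets suffices, which holds because every nonempty open $\Omega \subset G\times G$ contains some $U \times V$ and $P(U\times V) \leq P(\Omega)$.
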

\begin{proof}
a) For all $\phi, \psi \in A(G)$ the function $\phi \times \psi: (g,h) \mapsto \phi(g)\psi(h)$ belongs to the Fourier algebra $A(G \times G)$ and one easily checks the formula
\begin{equation}\label{eqmult}m_{\phi \times \psi}(\Delta(x \ot 1)) = \Delta(m_{\phi \cdot \psi}(x) \ot 1).\end{equation}
In particular if $g \neq h \in G$, we can find $\phi, \psi \in A(G)$ such that $(\phi \times \psi)(g,h) = \phi(g)\psi(h) \neq 0$, but such that $\phi \cdot \psi = 0$ (just take functions with disjoint supports). By \eqref{eqmult}, we get $m_{\phi \times \psi}(\Delta(x \ot 1)) = 0$ and hence $(g,h)$ does not belong to the support of $\Delta(x \ot 1)$. We thus deduce that the support of $\Delta(x \ot 1)$ is contained in the diagonal of $G \times G$. Further, \eqref{eqmult} easily implies that if $(g,g)$ belongs to the support of $\Delta(x \ot 1)$ then $g \in \supp(x)$.

Conversely, take $g \in \supp(x)$, and $\rho \in A(G \times G)$ such that $\rho(g,g) \neq 0$. We show that $m_\rho(\Delta(x \ot 1)) \neq 0$. Since $\rho$ is continuous, it does not vanish on some open neighborhood $U \times U$ of $(g,g)$. Multiplying $\rho$ by a local inverse if necessary, we may assume that $\rho$ is actually equal to $1$ on $U \times U$. Pick any function $\phi \in A(G)$ supported on $U$ and such that $\phi(g) \neq 0$. We deduce from \eqref{eqmult} that
\[m_{\phi \times \phi}(m_\rho(\Delta(x \ot 1))) = m_{(\phi \times \phi)\cdot \rho}(\Delta(x \ot 1)) =  m_{\phi \times \phi}(\Delta(x \ot 1)) =  \Delta(m_{\phi^2}(x) \ot 1).\]
Since $g \in \supp(x)$ and $\phi^2(g) \neq 0$, the above term is non-zero and hence $m_\rho(\Delta(x \ot 1))) \neq 0$.

b) Take $(g,h) \in \supp(a \ot b)$, and $\phi,\psi \in A(G)$ such that $\phi(g),\psi(h) \neq 0$. Then $(\phi \times \psi)(g,h) \neq 0$, and we get
\[m_\phi(a) \ot m_\psi(b) = m_{\phi \times \psi}(a \ot b) \neq 0.\]
We deduce that $g \in \supp(a)$ and $h \in \supp(b)$. 

The converse inclusion can be treated by using a local inverse as in a). Alternatively, we can use the other description of the support as follows. Take $g \in \supp(a)$, $h \in \supp(b)$ and take a non-empty open set $\Omega \subset G \times G$. We find non-empty open sets $U,V \subset G$ such that $U \times V \subset \Omega$. By assumption we know that $P(gU)aP(U) \neq 0$ and $P(hV)bP(V) \neq 0$, and hence $P(gU \times hV)(a \ot b)P(U \times V) \neq 0$. It then clearly follows that $P((g,h) \cdot \Omega)(a \ot b)P(\Omega) \neq 0$. This proves that $(g,h) \in \supp(a \ot b)$.
\end{proof}

We can now prove the following well known generalization of \cite[Theorem VII.3.9]{Ta03}. This theorem initially deals with the case of group algebras $LG$, that is, with the case of trivial actions $G \actson \C$. The proof given in \cite{Ta03} is rather involved, while an elementary proof relying (implicitly) on the support already appears in \cite[Th\'eor\`eme 3.34]{E64}. In the case of general actions $G \actson Q$, the predual $(Q \rtimes G)_*$ is not identified with an algebra in general, and hence the notion of character doesn't apply anymore. Nevertheless we can still provide an easy proof relying on the notion of support.

\begin{cor}\label{Tak3.9}
Given any action $G \actson Q$, put $M = Q \rtimes G$ and denote by $\Delta : M \ot 1 \to M \ovt LG$ the co-product map defined above. Assume that $x \in M$ is an element such that $\Delta(x \ot 1)$ is of the form $a \ot b$ for some elements $a \in M$ and $b \in LG$.

Then there exists $g \in G$ and $y \in Q$ such that $x = y u_g$.
\end{cor}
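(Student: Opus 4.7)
The plan is to compute the support of both sides of the equation $\Delta(x \ot 1) = a \ot b$ using Lemma \ref{Deltasupp}, and then invoke Corollary \ref{Beurling} to recover the explicit form of $x$.

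First I would apply Lemma \ref{Deltasupp}. Part (a) gives
\[\supp(\Delta(x \ot 1)) = \{(g,g)\,:\, g \in \supp(x)\},\]
while part (b) gives $\supp(a \ot b) = \supp(a) \times \supp(b)$. The equality of these two subsets of $G \times G$ is the only information we need.

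Next I would analyze the combinatorial consequences of the product $\supp(a) \times \supp(b)$ being contained in the diagonal of $G \times G$. If one of the factors is empty, both are, and hence $\supp(x) = \emptyset$, so by Corollary \ref{cor:prop-supp}(d) we get $x = 0$ and we can take $y = 0$ and any $g$. Otherwise, if $\supp(a)$ contained two distinct points $g_1 \neq g_2$ and $k \in \supp(b)$, both $(g_1,k)$ and $(g_2,k)$ would lie in the product but at most one could be on the diagonal, a contradiction. Thus $\supp(a)$ is a singleton, and similarly for $\supp(b)$. The diagonal condition forces $\supp(a) = \supp(b) = \{g\}$ for some $g \in G$, and consequently $\supp(x) = \{g\}$.

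Finally, with $\supp(x) = \{g\}$, Corollary \ref{Beurling} produces an element $y \in Q$ with $x = y u_g$, as desired. The main (and essentially only) step is the combinatorial observation that a Cartesian product contained in the diagonal must consist of a single point on that diagonal; everything else is a direct application of the support formulas already established.
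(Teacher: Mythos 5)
Your argument is correct and follows essentially the same route as the paper's proof: both reduce the statement to the observation, via Lemma \ref{Deltasupp}, that a Cartesian product $\supp(a)\times\supp(b)$ contained in the diagonal must be a single point, forcing $\supp(x)$ to be a singleton, and then conclude with Corollary \ref{Beurling}. You merely make explicit the degenerate case $x=0$ and the two-point contradiction that the paper summarizes in one line.
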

\begin{proof} 
We may assume that $x \neq 0$. By Lemma \ref{Deltasupp}, the equality $\Delta(x \ot 1) = a \ot b$ implies that $\supp(a) \times \supp(b)$ is contained in the diagonal of $G \times G$. The only way this can happen is if $\supp(\Delta(x \ot 1))$ is a singleton $\{(g,g)\}$. In this case, $\supp(x) = \{g\}$ and the result follows from Corollary \ref{Beurling}.
\end{proof}

As an immediate corollary, we deduce the following result. 

\begin{cor}\label{normalizer}
Given any strictly outer action $G \actson Q$, the normalizer of $Q$ inside the crossed-product $M := Q \rtimes G$ is equal to $\{a u_g \, , \, g\in G, \, a \in \cU(Q)\}$.

In particular, two strictly outer actions $\alpha_i:G_i\actson Q_i$, $i=1,2$ of locally compact groups are cocycle conjugate if and only if the pairs $Q_1\subset Q_1\rtimes G_1$ and  $Q_2\rtimes Q_2\rtimes G_2$ are isomorphic.
\end{cor}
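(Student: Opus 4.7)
The plan is to apply Corollary~\ref{Tak3.9} to unitaries normalizing $Q$: for any such $u \in M := Q \rtimes G$, I will show that $\Delta(u \otimes 1) = u \otimes b$ for some $b \in LG$. Corollary~\ref{Tak3.9} will then force $u = y u_g$ with $y \in Q$, $g \in G$, and unitarity of $u$ (with $u_g$ unitary) makes $y$ unitary in $Q$. The reverse inclusion $\{a u_g : a \in \cU(Q), g \in G\} \subset \cN_M(Q)$ is immediate from $u_g a u_g^* = \sigma_g(a) \in Q$ for $a \in Q$.

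The key computation goes as follows. From the explicit definition of $W$ and the fact that $\pi(Q)$ consists of $Q$-valued multiplication operators in the $G$-variable, a direct check gives $\Delta(a \otimes 1) = a \otimes 1$ for all $a \in Q$. Setting $w := \Delta(u \otimes 1)(u^* \otimes 1) \in M \ovt LG$ and using $u Q u^* = Q$, we have for every $a \in Q$,
\[
w(a \otimes 1) w^* \;=\; \Delta(u \otimes 1)(u^* a u \otimes 1) \Delta(u^* \otimes 1) \;=\; \Delta(u \cdot u^* a u \cdot u^* \otimes 1) \;=\; a \otimes 1,
\]
so $w \in (Q \otimes 1)' \cap (M \ovt LG)$. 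By Tomita's commutation theorem together with a routine slicing argument, this relative commutant equals $(Q' \cap M) \ovt LG$, which collapses to $1 \otimes LG$ by strict outerness. Hence $w = 1 \otimes b$ for some $b \in LG$, so that $\Delta(u \otimes 1) = u \otimes b$, and Corollary~\ref{Tak3.9} yields the desired decomposition of $u$. I expect the main (mild) technical point here to be keeping track of the identification between $B(L^2(G \times G, \cH))$ and $B(L^2(G, \cH)) \ovt B(L^2(G))$ in order to make the identities $\Delta(a \otimes 1) = a \otimes 1$ and $(Q \otimes 1)' \cap (M \ovt LG) = (Q' \cap M) \ovt LG$ fully rigorous.

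For the cocycle conjugacy assertion, the easy direction is clear. Conversely, given an isomorphism of pairs $\pi : (Q_1 \subset Q_1 \rtimes G_1) \to (Q_2 \subset Q_2 \rtimes G_2)$, the normalizer description just established lets $\pi$ restrict to an isomorphism between the semi-direct products $\cU(Q_i) \rtimes G_i$, $i = 1, 2$, carrying $\cU(Q_1)$ onto $\cU(Q_2)$. Quotienting by $\cU(Q_i)$ yields a group isomorphism $\delta : G_1 \to G_2$, and writing $\pi(u_g) = v(g) u_{\delta(g)}$ with $v(g) \in \cU(Q_2)$ defines a $1$-cocycle $v : G_1 \to \cU(Q_2)$ for the action $\alpha_2 \circ \delta$; together with $\pi|_{Q_1} : Q_1 \to Q_2$ this exhibits $\alpha_1$ and $\alpha_2$ as cocycle conjugate. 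The delicate point in this last paragraph is confirming that $\delta$ is a topological group isomorphism and that $v$ is strongly continuous; both follow from the ultraweak continuity of $\pi$ and the fact that the classes $\cU(Q_i) u_g$ are detected by the Plancherel operator valued weight $T_{Q_i}$.
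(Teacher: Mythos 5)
Your proof is correct and follows essentially the same route as the paper: one shows $\Delta(u\ot 1)$ differs from $u\ot 1$ by an element of $(Q\ot 1)'\cap(M\ovt LG)=(Q'\cap M)\ovt LG=1\ot LG$ and invokes Corollary~\ref{Tak3.9} (the paper multiplies by $u^*\ot 1$ on the left rather than the right, an immaterial difference). For the cocycle-conjugacy statement the paper simply calls it routine and cites \cite{Ta73} and \cite{BNP}; your sketch of the quotient map $\cN_M(Q)/\cU(Q)\to G$ and the resulting cocycle is exactly the intended argument.
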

\begin{proof}
Take $u \in \cN_M(Q)$. Use the notation $\Delta: M \ot 1 \to M \ovt LG$ as above. We have that $\Delta$ is the identity map on $Q \ot 1$. Hence one easily checks that $(u^* \ot 1)\Delta(u \ot 1)$ commutes with $Q \ot 1$. In other words,
\[(u^* \ot 1)\Delta(u \ot 1) \in (Q' \cap M) \ovt LG = 1 \ot LG.\]
So there exists $b \in LG$ such that $\Delta(u \ot 1) = u \ot b$, and we conclude by Corollary \ref{Tak3.9} that $u = au_g$ for some $a \in \cU(Q)$ and $g \in G$.

The second part of the statement is routine. The only if part is always true, even when the actions are not strictly outer, see \cite[Corollary 3.6]{Ta73}. The if part follows from adapting \cite[Proposition 1.7]{BNP} to the case of actions of general locally compact groups. 
\end{proof}

\section{Crossed-products by actions of totally disconnected groups}

\subsection{Notations and tools}

There are several advantages in working with totally disconnected groups. Fix a locally compact group $G$ and an action $G \actson Q$ on a von Neumann algebra and put $M = Q \rtimes G$.

Firstly, given a compact open subgroup $K < G$, one can define a projection $p_K := \mathbf{1}_{K}/m_G(K)$. Note that the net of projections $(p_K)_K$ increases to $1$ as the compact open subgroups $K$ decrease to $\{e\}$. 
Let us mention two elementary properties of these projections.

\begin{lem}\label{Hilbert computations}
Fix a weight $\Phi \in \cP(Q)$ and denote by $\Psi \in \cP(M)$ the corresponding dual weight. Consider a compact open subgroup $K < G$. Then for all $a \in Q^K$ we have $\Vert ap_K \Vert_\Psi = \Vert a \Vert_\Phi/\sqrt{m_G(K)}$.
\end{lem}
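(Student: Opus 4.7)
The strategy is to compute $\Vert ap_K\Vert_\Psi^2 = \Psi((ap_K)^*(ap_K))$ by unfolding the dual weight $\Psi = \Phi \circ T_Q$ and applying the defining formula \eqref{eqdualweight} for the Plancherel operator valued weight.

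First, I would identify $p_K \in Q \rtimes G$ with the element of $\cK(G,Q)$ corresponding, under the embedding $F \mapsto \int_G u_s F(s) \ds$, to the scalar-valued function $F_0(s) := \mathbf{1}_K(s)/m_G(K)\cdot 1_Q$. A short check based on the convolution and involution formulas, together with $\delta_G|_K \equiv 1$ (since $K$ is compact), confirms that $F_0 = F_0^* = F_0\cdot F_0$, so $p_K$ is indeed a self-adjoint projection. Via the right $Q$-module structure $(F\cdot a)(s)=F(s)a$, the element $ap_K$ (which, thanks to $a\in Q^K$, equals $p_Ka$) corresponds to the function $F(s) := \mathbf{1}_K(s)a/m_G(K)$ in $\cK(G,Q)$.

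Second, I would apply the characterization $T_Q(F^*F) = (F^*F)(e)$ from \eqref{eqdualweight} and compute this value from the convolution and involution formulas:
\[
(F^*F)(e) = \int_G \sigma_s\bigl(F^*(s)\bigr)F(s^{-1}) \ds = \int_G \delta_G(s)^{-1} F(s^{-1})^*F(s^{-1}) \ds,
\]
where the second equality uses $\sigma_s(F^*(s)) = \delta_G(s)^{-1}F(s^{-1})^*$. A change of variable $s \mapsto s^{-1}$ (which produces a factor $\delta_G(s)^{-1}$ that cancels $\delta_G(s)$) transforms this into $\int_G F(s)^*F(s) \ds$. Substituting $F(s) = \mathbf{1}_K(s)a/m_G(K)$ then gives
\[
T_Q\bigl((ap_K)^*(ap_K)\bigr) = \frac{1}{m_G(K)^2}\int_K a^*a \ds = \frac{a^*a}{m_G(K)}.
\]

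Finally, applying $\Phi$ and taking a square root yields the claim:
\[
\Vert ap_K\Vert_\Psi^2 = \Phi\bigl(T_Q((ap_K)^*(ap_K))\bigr) = \frac{\Phi(a^*a)}{m_G(K)} = \frac{\Vert a\Vert_\Phi^2}{m_G(K)}.
\]
No genuine obstacle is expected: the argument is a direct unwinding of the definition of the Plancherel weight, and the only mildly delicate point is the bookkeeping for $\delta_G$, which in the end cancels out.
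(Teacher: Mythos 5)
Your proposal is correct and is exactly the computation the paper has in mind: the paper's proof is the one-line remark that the claim is "an immediate consequence of \eqref{eqdualweight}", and your argument simply unwinds that, identifying $ap_K$ with the function $s\mapsto \mathbf{1}_K(s)a/m_G(K)$ in $\cK(G,Q)$ and evaluating $(F^*F)(e)$ (the $\delta_G$ bookkeeping is even superfluous here, since everything is supported on the compact subgroup $K$ where $\delta_G\equiv 1$). No discrepancy with the paper's approach.
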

\begin{proof}
This is an immediate consequence of \eqref{eqdualweight}.
\end{proof}

\begin{lem}\label{corner}
Given a compact open subgroup $K < G$, the map $a \in Q^K \mapsto ap_K \in p_K(Q \rtimes K)p_K$ is an onto isomorphism of von Neumann algebras.
\end{lem}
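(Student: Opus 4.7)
The plan is to realize $\phi : a \mapsto a p_K$ as an injective normal $*$-homomorphism of von Neumann algebras and to reduce surjectivity to a compression calculation on the dense subalgebra $\cK(K, Q) \subset Q \rtimes K$. The starting observation is the absorption formula $u_s p_K = p_K u_s = p_K$ for every $s \in K$, a direct change-of-variables computation using the bi-invariance of Haar measure on the compact group $K$. This implies that $p_K$ is a self-adjoint projection inside $LK \subset Q \rtimes K$, and combined with $u_s a = a u_s$ for $a \in Q^K$, $s \in K$ (a rephrasing of $\sigma_s(a) = a$), that $p_K$ commutes with $Q^K$. In particular $a p_K = p_K a p_K \in p_K(Q \rtimes K) p_K$, and it follows immediately that $\phi$ is a unital normal $*$-homomorphism into this corner.

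For injectivity I would argue at the Hilbert space level. Suppose $a \in Q^K$ satisfies $a p_K = 0$, and apply this to the vector $\xi_\eta := m_G(K)^{-1/2} \mathbf{1}_K \otimes \eta \in L^2(G, \cH)$ for an arbitrary $\eta \in \cH$. A short calculation gives $p_K \xi_\eta = \xi_\eta$ (the $K$-averaging fixes vectors supported on $K$), so $\pi(a) \xi_\eta = 0$; evaluating at any $t \in K$ and using $\sigma_t^{-1}(a) = a$ forces $a \eta = 0$ for every $\eta \in \cH$, hence $a = 0$. Since an injective normal $*$-homomorphism between von Neumann algebras has ultraweakly closed image, it suffices to show that $\phi(Q^K)$ is ultraweakly dense in $p_K(Q \rtimes K) p_K$.

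For this density, the key is a compression identity on the dense $*$-subalgebra $\cK(K, Q) \subset Q \rtimes K$. Given $F \in \cK(K, Q)$ and $y_F := \int_K u_s F(s) \ds$, combining the absorption $p_K u_s = p_K$ with the easily checked identity $p_K a p_K = E_K(a) p_K$ for $a \in Q$, where $E_K(a) := m_G(K)^{-1} \int_K \sigma_s(a) \ds \in Q^K$ is the canonical Haar averaging, gives $p_K y_F p_K = \bigl( \int_K E_K(F(s)) \ds \bigr) p_K \in \phi(Q^K)$. Since the $y_F$ are ultraweakly dense in $Q \rtimes K$, so are the $p_K y_F p_K$ in $p_K(Q \rtimes K) p_K$, which closes the argument. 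The only mildly subtle step is the injectivity: invoking Lemma \ref{Hilbert computations} directly would require a $K$-invariant weight on $Q$ whose restriction to $Q^K$ is semi-finite, whereas the concrete vector argument above avoids that issue.
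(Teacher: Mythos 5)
Your argument is correct and, for the most part, runs along the same lines as the paper's proof: $p_K$ commutes with $Q^K$, so $a \mapsto ap_K$ is a normal unital $*$-homomorphism, and surjectivity is reduced to ultraweak density of the image via the compression identity $p_K a p_K = E(a) p_K$ (you compress the dense subalgebra $\cK(K,Q)$, the paper compresses the generators $au_g$; this is the same computation). The one place where you genuinely diverge is injectivity: the paper invokes Lemma \ref{Hilbert computations}, i.e.\ the dual-weight identity $\Vert ap_K\Vert_\Psi = \Vert a\Vert_\Phi/\sqrt{m_G(K)}$, whereas you check directly on $L^2(G,\cH)$ that $ap_K$ kills the vectors $m_G(K)^{-1/2}\mathbf{1}_K\otimes\eta$ only if $a\eta=0$ for all $\eta\in\cH$. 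Your variant is slightly more self-contained and sidesteps the (minor, but real) point you raise, namely that the weight-based argument implicitly needs $\Vert\cdot\Vert_\Phi$ to detect nonzero elements of $Q^K$; both routes are valid, and the rest of your write-up matches the paper's.
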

\begin{proof}
This is a particular case of Proposition \ref{prop:corner-hecke}.
We give a complete proof of this simpler case for the convenience of the reader.
Since $p_K$ commutes with $Q^K$, it is clear that the map is a normal $*$-morphism. By the computations given in Lemma \ref{Hilbert computations} above, we see that it is moreover injective. To check that it is onto, we only need to prove that it has dense image.

For all $a \in Q$ and $g \in K$, we have $p_K(au_g)p_K = p_Kap_K = E(a)p_K$, where $E:Q\rightarrow Q^K$ is the conditional expectation. 
Hence $p_K(au_g)p_K$ belongs to the range of our map, proving the lemma.
\end{proof}

Secondly, for all open subgroup $K < G$ and any action $G \actson Q$, there always exists a faithful normal conditional expectation $E_K : Q \rtimes G \to Q \rtimes K$ such that $E_K(u_g) = \mathbf 1_{K}(g)$ for all $g \in G$. In the case where $K$ is compact open, one sees that the multiplier $m_\phi$ associated with the function $\phi := \mathbf{1}_K \in A(G)$ gives the desired expectation. In the general case of open subgroups $\phi$ does not necessarily belong to $A(G)$ but it is still positive definite and one can use \cite[Theorem 3.1.a]{Ha78b} to construct the associated multiplier $m_\phi$. 

An alternative way to construct the expectation $E_K$ is by considering modular flows as in \cite{HR16}. With this point of view, it becomes obvious that $E_K$ preserves the Plancherel operator valued weight: $T_Q \circ E_K = T_Q$.

When $G$ is totally disconnected, one checks that the support of an element $x \in M$ is described as follows:
\begin{equation}\label{supptdlc} \supp(x) = \{ g \in G \, : \,  E_K(u_g^*x) \neq 0 \text{ for all  compact open subgroups } K < G\}.\end{equation}

\begin{notation}
Given a compact open subgroup $K$ of a group $G$ and a set $S \subset G/K$, we denote by $\lift(S)$ a set of representatives of $S$ inside $G$. This means that for all $g \in \lift(S)$ we have $gK \in S$ and for all $s \in S$ there exists a unique element $g \in \lift(S)$ such that $gK = s$.
\end{notation}

\begin{lem}\label{compactKdec}
Consider a compact open subgroup $K < G$. Given an element $x \in M$ with compact support, the map $g \mapsto u_gE_K(u_g^*x)$ is right $K$-invariant on $G$ and compactly supported. Moreover,
\[x = \sum_{g \in \lift(G/K)}  u_gE_K(u_g^*x).\]
\end{lem}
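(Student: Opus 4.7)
The plan is to establish the three claims in order, combining the $(Q\rtimes K)$-bimodularity of $E_K$ with the support calculus of Section \ref{section support}, in particular the characterization \eqref{supptdlc} and the vanishing criterion Corollary \ref{cor:prop-supp}(d).

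The right $K$-invariance is a direct computation. For $k\in K$, since $u_k\in Q\rtimes K$ and $E_K$ is a $(Q\rtimes K)$-bimodule map, $E_K(u_{gk}^*x)=u_k^*E_K(u_g^*x)$, so $u_{gk}E_K(u_{gk}^*x)=u_gE_K(u_g^*x)$; in particular the summands indexed by $\lift(G/K)$ are well defined.

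For the compact support assertion, I would first establish the auxiliary estimate $\supp(E_K(z))\subset K\cap\supp(z)$ for every $z\in M$. The inclusion into $K$ is Theorem \ref{sgpthm} applied to $E_K(z)\in Q\rtimes K$. For the other inclusion, if $h\in K$ does not lie in $\supp(z)$, pick $\psi\in A(G)$ with $\psi(h)\neq 0$ and $m_\psi(z)=0$, and observe that $\phi:=\mathbf{1}_K\psi\in A(G)$ satisfies $\phi(h)=\psi(h)\neq 0$ and $m_\phi(z)=E_K(m_\psi(z))=0$, so $h\notin\supp(E_K(z))$. Applied to $z=u_g^*x$, whose support equals $g^{-1}\supp(x)$ by Corollary \ref{cor:prop-supp}(c), this shows $E_K(u_g^*x)=0$ unless $gK\subset\supp(x)K$. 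Since $\supp(x)K$ is compact and $G/K$ is discrete (as $K$ is open), only finitely many cosets survive.

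For the equality, set $y:=\sum_{g\in\lift(G/K)}u_gE_K(u_g^*x)\in M$. By Corollary \ref{cor:prop-supp}(d) it suffices to show $\supp(x-y)=\emptyset$, and by \eqref{supptdlc} this reduces to checking $E_K(u_h^*(x-y))=0$ for every $h\in G$. Writing $h=g_0k$ with $g_0\in\lift(G/K)$ representing $hK$ and $k\in K$, bimodularity reduces the task to proving $E_K(u_{g_0}^*y)=E_K(u_{g_0}^*x)$. Expanding,
\[
E_K(u_{g_0}^*y)=E_K(E_K(u_{g_0}^*x))+\sum_{g\neq g_0}E_K\bigl(u_{g_0^{-1}g}E_K(u_g^*x)\bigr),
\]
the first term equals $E_K(u_{g_0}^*x)$, while for each $g\neq g_0$ in $\lift(G/K)$ the element $u_{g_0^{-1}g}E_K(u_g^*x)$ has support contained in $g_0^{-1}gK$ by Corollary \ref{cor:prop-supp}(c) and Theorem \ref{sgpthm}, and this coset is disjoint from $K$ because $g_0K\neq gK$; the auxiliary estimate then forces its $E_K$-image to vanish.

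The only non-routine point is the auxiliary estimate $\supp(E_K(z))\subset K\cap\supp(z)$; the rest of the argument is coset bookkeeping combined with the bimodularity of $E_K$.
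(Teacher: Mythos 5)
Your proof is correct, but it takes a different route from the paper's. The paper's argument is top-down: it fixes a finite set $S\subset G/K$ with $\supp(x)\subset \lift(S)K$, notes that $\phi=\mathbf 1_{\lift(S)K}\in A(G)$ equals $1$ on a neighborhood of $\supp(x)$ so that $x=m_\phi(x)$ by Lemma \ref{trivmult}, and then decomposes $\phi=\sum_{g\in\lift(S)}\mathbf 1_K(g^{-1}\cdot)$ and identifies the multiplier of each summand with $u_gE_K(u_g^*\,\cdot)$; the remaining verifications (right $K$-invariance, compactness of the support of $g\mapsto u_gE_K(u_g^*x)$) are left to the reader. You instead avoid Lemma \ref{trivmult} entirely and argue bottom-up: you define $y$ as the candidate sum and show $\supp(x-y)=\emptyset$ by computing $E_K(u_h^*(x-y))$ coset by coset, using the vanishing criterion of Corollary \ref{cor:prop-supp}(d), the characterization \eqref{supptdlc}, and your auxiliary estimate $\supp(E_K(z))\subset K\cap\supp(z)$ (which is correctly derived from $m_{\mathbf 1_K}=E_K$ and multiplicativity of $\phi\mapsto m_\phi$). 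The paper's route is shorter once Lemma \ref{trivmult} is in hand, since the whole identity drops out of linearity of $\phi\mapsto m_\phi$; your route is more self-contained at the level of this lemma, makes the finiteness of the sum and the orthogonality of the cosets explicit, and your auxiliary estimate is a reusable refinement of the statement that $E_K$ maps into $Q\rtimes K$. Both arguments are sound; note only that in your support computation Corollary \ref{cor:prop-supp}(c) directly gives the inclusion $\supp(u_g^*x)\subset g^{-1}\supp(x)$ rather than the equality you assert (equality follows by applying it to $x=u_g(u_g^*x)$ as well), but the inclusion is all your argument uses.
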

\begin{proof}
Fix $S \subset G/K$ a finite set such that the support of $x$ is contained in $\lift(S) \cdot K$.
The function $\phi = \mathbf{1}_{\lift(S)K} \in A(G)$ is equal to $1$ on a neighborhood of the support of $x$, so by Lemma \ref{trivmult}, we have that $x = m_\phi(x)$. Moreover, $\phi$ can be decomposed as $\phi = \sum_{g \in \lift(S)} \mathbf{1}_K(g^{-1}\, \cdot )$. One easily checks that the corresponding multiplier satisfies
\[m_\phi = \sum_{g \in \lift(S)} u_gE_K(u_g^* \,  \cdot).\]
We leave the rest of the proof to the reader.
\end{proof}

Although we will not use this fact, let us mention that for a general element $x \in M$, a $K$-decomposition as above still makes sense. In this case the sum that appears is infinite but it converges in the Bures topology associated with the inclusion $Q \rtimes K \subset Q \rtimes G$, with expectation $E_K$. We refer to \cite[Section 2]{CS16} or to the original book \cite{Bu71} for the definition of the Bures topology.

\subsection{Strictly outer actions}

The following proposition combines the "Fourier coefficient approach" used in the setting of discrete groups with Lemma \ref{minaction} about actions of compact groups.

\begin{prop}\label{factoriality}
Consider a properly outer action of a totally disconnected locally compact group $\sigma: G \actson Q$.
The action $\sigma$ is strictly outer if and only if $G$ admits a compact open subgroup that acts minimally.

In that case any unitary $u \in Q \rtimes G$ normalizing $Q$ is of the form $au_g$ for some $a \in Q$, $g \in G$.
\end{prop}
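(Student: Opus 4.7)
The plan is to establish the two implications separately, and then read off the final claim on normalizers from Corollary \ref{normalizer}. For the easy implication, any compact open subgroup $K < G$ (which exists since $G$ is totally disconnected) satisfies $Q' \cap (Q \rtimes K) \subseteq Q' \cap (Q \rtimes G) = \C$, so $K \actson Q$ is strictly outer, and hence minimal by Lemma \ref{minaction}. For the converse, suppose $\sigma$ is properly outer and that $K_0 < G$ is a compact open subgroup whose action on $Q$ is minimal (equivalently strictly outer, by Lemma \ref{minaction}). I will show that for every $x \in Q' \cap (Q \rtimes G)$ one has $\supp(x) \subset K_0$, so that Theorem \ref{sgpthm} yields $x \in Q \rtimes K_0$ and then $x \in Q' \cap (Q \rtimes K_0) = \C$.

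Fix such an $x$ and, for each $g \in G$, set $y_g := E_{K_0}(u_g^{-1} x) \in Q \rtimes K_0$. Since $E_{K_0} = m_{\mathbf{1}_{K_0}}$ is $Q$-bimodular and $a u_g^{-1} = u_g^{-1} \sigma_g(a)$ for $a \in Q$, the commutation $ax = xa$ translates into the intertwining identity $a y_g = y_g \sigma_g(a)$ for every $a \in Q$; its adjoint version $y_g^* c = \sigma_g(c) y_g^*$ follows by taking $*$. Combining these two relations shows that both $y_g^* y_g$ and $y_g y_g^*$ commute with $Q$, so they lie in $Q' \cap (Q \rtimes K_0) = \C$. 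Writing $y_g^* y_g = \lambda_g \cdot 1$, if $\lambda_g > 0$ then $v := y_g / \sqrt{\lambda_g}$ is an isometry in $Q \rtimes K_0$ whose range projection $v v^*$ is simultaneously a projection and a nonzero scalar multiple of $1$, so $v$ is unitary and the intertwining relation reads $v a v^* = \sigma_g^{-1}(a)$.

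Applying Corollary \ref{normalizer} to the strictly outer action $K_0 \actson Q$ of the compact group $K_0$ yields $v = b u_k$ for some $b \in \cU(Q)$ and $k \in K_0$; matching inner automorphisms on $Q$ gives $\Ad(b) = \sigma_{g^{-1} k^{-1}}$, so proper outerness of $\sigma$ forces $g^{-1} k^{-1} = e$, i.e.\ $g = k^{-1} \in K_0$. Hence $y_g = 0$ for all $g \notin K_0$, and the characterization \eqref{supptdlc} of the support in the totally disconnected setting then forces $\supp(x) \subset K_0$, as desired. The last assertion of the proposition is an immediate instance of Corollary \ref{normalizer} applied to the (now established) strictly outer action $\sigma$. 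The main conceptual hurdle is arranging the reduction to the compact case; the bridge is the support machinery of Section \ref{section support}, in particular Theorem \ref{sgpthm}, which promotes the coefficient-wise vanishing $y_g = 0$ to the global containment $x \in Q \rtimes K_0$.
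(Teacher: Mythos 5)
Your proof is correct. Both directions check out: the easy implication is the same as the paper's, and in the converse every step is sound — the $Q$-bimodularity of $E_{K_0}=m_{\mathbf 1_{K_0}}$, the intertwining relation $ay_g=y_g\sigma_g(a)$, the fact that $y_g^*y_g,\,y_gy_g^*\in Q'\cap(Q\rtimes K_0)=\C$ by Lemma \ref{minaction}, and the absence of circularity in invoking Corollary \ref{normalizer} (which is established before this proposition and independently of it).

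The overall strategy (analyse the Fourier coefficients $E_K(u_g^*x)$ of an element $x\in Q'\cap(Q\rtimes G)$, use minimality of the compact open subgroup to turn them into scalar multiples of unitaries, then kill them with proper outerness) coincides with the paper's, but the middle step is handled differently. The paper fixes $g\in\supp(x)$ and considers the whole nested family $a_K=E_K(u_g^*x)$ for compact open $K<K_0$; the relations $a_{K'}^*a_K\in Q'\cap(Q\rtimes K)=\C$ force all these coefficients to be proportional, hence $a_{K_0}\in\bigcap_K(Q\rtimes K)=Q$, and proper outerness applied to the resulting intertwiner in $Q$ gives $g=e$ directly. You instead work with the single subgroup $K_0$: the nonzero coefficient becomes a unitary of $Q\rtimes K_0$ normalizing $Q$, Corollary \ref{normalizer} (for the compact strictly outer action $K_0\actson Q$) writes it as $bu_k$, and proper outerness only yields the weaker conclusion $g=k^{-1}\in K_0$; you then need a second pass — $\supp(x)\subset K_0$, hence $x\in Q\rtimes K_0$ by Theorem \ref{sgpthm}, hence $x\in\C$ by minimality — to finish. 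Your route is slightly shorter and avoids the shrinking-subgroups/proportionality argument, at the cost of importing the coproduct machinery behind Corollaries \ref{Tak3.9} and \ref{normalizer}; the paper's version is more self-contained, using only the support calculus and Corollary \ref{Beurling}. Both are perfectly valid.
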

\begin{proof}
Assume that $\sigma:G\actson Q$ is strictly outer. Then one easily checks from the definitions that $\sigma$ is properly outer. Moreover, the restriction of $\sigma$ to any compact subgroup is strictly outer and thus minimal by \cite[Proposition 6.2]{V01}. 

Conversely, assume that $\sigma:G\actson Q$ is properly outer and admits a compact open subgroup $K_0$ that acts minimally.
Note that any subgroup $K < K_0$ acts minimally as well, and hence in a strictly outer way.

Put $M = Q \rtimes G$ and take a non-zero element $x \in Q' \cap M$. 
By Proposition \ref{Beurling} we only need to show that the support of $x$ is the singleton $\{e\}$. 
The support of $x$ is non-empty by Corollary \ref{cor:prop-supp}.d. Take $g \in \supp(x)$.
For all compact open subgroups $K < K_0$, put $a_K := E_K(u_g^*x) \in Q \rtimes K$. Since $g \in \supp(x)$, these elements $a_K$ are non-zero. 

By minimality, for all $K' < K < K_0$, we have 
\begin{equation}\label{1111} a_{K'}^*a_K \in Q' \cap (Q \rtimes K) = \C.\end{equation}
When $K = K'$ this relation tells us that $a_K$ is a (non-zero) multiple of a unitary element. Then \eqref{1111} further implies that all these unitaries are proportional to each other. In particular, for all $K < K_0$, $a_{K_0}$ is proportional to $a_{K} \in Q \rtimes K$.

We conclude that $a_{K_0}$ is contained in $Q \rtimes K$, for all $K < K_0$. Hence its support is equal to $\{e\}$, implying that $a_{K_0} \in Q$. Moreover, $a_{K_0}$ satisfies
\[a_{K_0}x = \sigma_{g^{-1}}(x)a_{K_0}, \quad \text{ for all } x \in Q.\]
Since the $G$-action is properly outer, this gives $g = e$, as desired.
The statement on the normalizer then follows from Corollary \ref{normalizer} (although this could be checked directly by similar computations on the support).
\end{proof}

\subsection{Intermediate subfactors}

We now turn to the question of determining all intermediate subfactors $Q \subset N \subset Q \rtimes G$. In order to establish our main result Theorem \ref{intermediate subfactor}, we will need to be able to compute relative commutants of the form $(Q^K)' \cap Q \rtimes G$ for small compact open subgroups $K < G$.
This forces us to strengthen our assumptions on the action.

\begin{defn}\label{POR}
Given a subgroup $K$ of $G$ we will say that the action $G \actson Q$ is {\it properly outer relative to $K$} if the following holds. The only elements $g \in G$ for which there exists a non-zero $a \in Q$ such that $\sigma_g(x)a = ax$ for all $x \in Q^K$ are the elements of $K$: $g \in K$.
\end{defn}

Note that an action is \PO\ if and only if it is \POR\ to the trivial subgroup. We will provide examples of relatively properly outer actions in the next section.

\begin{lem}\label{relcomK1}
Consider an action $\sigma: G \actson Q$ of a totally disconnected group $G$ which is properly outer relative to a compact open subgroup $K < G$. Put $M = Q \rtimes G$. We have \[(Q^K)' \cap M \subset Q \rtimes K.\]
\end{lem}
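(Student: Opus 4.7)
The plan is to argue by contradiction via the theory of supports. By Theorem \ref{sgpthm}, it suffices to prove that $\supp(x) \subset K$ for every $x \in (Q^K)' \cap M$. Assume to the contrary that some $g \in \supp(x)$ lies outside $K$. The element $b := E_K(u_g^* x) \in Q \rtimes K$ is non-zero by \eqref{supptdlc}, and the $Q \rtimes K$-bimodularity of $E_K$ combined with the commutation of $x$ with $Q^K$ gives
\[
bc = E_K(u_g^* c x) = E_K(\sigma_g^{-1}(c) u_g^* x) = \sigma_g^{-1}(c) b \qquad \text{for all } c \in Q^K.
\]

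The core of the argument is to transfer this intertwining on $b \in Q \rtimes K$ to an intertwining on an element of $Q$; Definition \ref{POR} would then force $g^{-1} \in K$, contradicting $g \notin K$. For this I would compress by $p_{K_1}$ for a compact open subgroup $K_1$ of $K$ chosen to be normal in $K$ and contained in $K \cap g^{-1} K g$. Such $K_1$ arise as normal cores in $K$ of compact open subgroups of $K \cap g^{-1} K g$: each compact open subgroup of the compact group $K$ has only finitely many distinct $K$-conjugates (its normalizer being open), so the normal core is a finite intersection of open subgroups and is again compact open. Such subgroups form a neighborhood basis of the identity, and since $p_{K_1} \to 1$ strongly as $K_1$ shrinks, we can arrange $p_{K_1} b p_{K_1} \neq 0$. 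The condition $K_1 \subset K \cap g^{-1}Kg$ guarantees that $c$ and $\sigma_g^{-1}(c)$ both lie in $Q^{K_1}$ and hence commute with $p_{K_1}$, so the intertwining descends:
\[
(p_{K_1} b p_{K_1}) c = \sigma_g^{-1}(c) (p_{K_1} b p_{K_1}).
\]

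The normality of $K_1$ in $K$ makes $[K:K_1]$ finite (as $K$ is compact), and the iterated crossed-product decomposition $Q \rtimes K = (Q \rtimes K_1) \rtimes (K/K_1)$ together with Lemma \ref{corner} applied to the inner corner $p_{K_1}(Q \rtimes K_1) p_{K_1} = Q^{K_1} p_{K_1}$ yields
\[
p_{K_1}(Q \rtimes K) p_{K_1} \;\cong\; Q^{K_1} \rtimes (K/K_1),
\]
in which $Q^{K_1}$ sits as the coefficient algebra and $Q^K = (Q^{K_1})^{K/K_1}$ as its fixed points. Expanding uniquely $p_{K_1} b p_{K_1} = \sum_{k \in K/K_1} \gamma_k \bar u_k$ with $\gamma_k \in Q^{K_1}$, and using that $\sigma_k(c) = c$ for $c \in Q^K$ and $k \in K$, the intertwining forces $\gamma_k c = \sigma_g^{-1}(c) \gamma_k$ for every $k$. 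Since $p_{K_1} b p_{K_1} \neq 0$, some $\gamma_k \in Q^{K_1} \subset Q$ is non-zero; applying Definition \ref{POR} with $a := \gamma_k$ and $h := g^{-1}$ yields $g^{-1} \in K$, the desired contradiction.

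The main obstacle lies in the compression step itself: producing an element of $Q$ out of $b \in Q \rtimes K$ while preserving the intertwining with $Q^K$. The normal-core construction is what simultaneously achieves both required conditions on $K_1$, namely normality in $K$ (for the finite crossed-product structure of the corner) and containment in $g^{-1} K g$ (for the intertwining to descend to the corner).
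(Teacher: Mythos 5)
Your proposal is correct and follows essentially the same route as the paper's proof: reduce to showing $\supp(x)\subset K$, compress the nonzero coefficient $E_K(u_g^*x)$ by $p_L$ for $L$ a normal core in $K$ of an open subgroup of $K\cap g^{-1}Kg$ (resp.\ $K\cap gKg^{-1}$), use Lemma \ref{corner} to produce a nonzero intertwiner in $Q^L$, and invoke relative proper outerness. The only (immaterial) difference is bookkeeping: the paper further decomposes over $\lift(K/L)$ via Lemma \ref{compactKdec} and extracts a single nonzero term $p_LE_L(u_{gh}^*x)p_L$, whereas you keep the whole corner element and read the intertwining off every Fourier coefficient of $p_{K_1}(Q\rtimes K)p_{K_1}\cong Q^{K_1}\rtimes(K/K_1)$ at once.
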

\begin{proof}
Take $x \in (Q^K)' \cap M$. We show that $\supp(x) \subset K$. Fix $g \in \supp(x)$, so that $E_K(u_g^*x) \neq 0$. Since the net $(p_L)_L$ converges strongly to $1$ when $L \searrow \{e\}$, we can find a compact open subgroup $L_0 < G$ such that $p_{L_0}E_K(u_g^*x)p_{L_0} \neq 0$.

Define $L_1 = L_0 \cap K \cap gKg^{-1}$ and $L := \cap_{h \in K} hL_1h^{-1}$. Since $L_1$ is open, it has finite index inside $K$, and hence the intersection defining $L$ is in fact finite. We conclude that $L < K$ is an open subgroup which is moreover contained in $L_0 \cap K \cap gKg^{-1}$. 
In particular $p_L \geq p_{L_0}$, so that $p_LE_K(u_g^*x)p_L \neq 0$. Since $L$ is normal inside $K$, the projection $p_L$ commutes with $u_h$ for all $h \in K$. By Lemma \ref{compactKdec}, we have
\[p_LE_K(u_g^*x)p_L = \sum_{h \in \lift(K/L)} p_Lu_hE_L(u_{gh}^*x)p_L = \sum_{h \in \lift(K/L)} u_hp_LE_L(u_{gh}^*x)p_L.\]
The non-vanishing of this sum implies that there exists $h \in K$ such that $X := p_LE_L(u_{gh}^*x)p_L$ is non-zero.

Since $L < K \cap gKg^{-1}$, we have that $Q^K \subset Q^L$ and $\sigma_g(Q^K) = Q^{gKg^{-1}} \subset Q^L$. Hence for all $b \in Q^K$, the two elements $b$ and $\sigma_g(b)$ commute with $p_L$. It follows
\[X b = p_LE_L(u_{gh}^*xb)p_L = p_LE_L(u_{gh}^*bx)p_L = p_L\sigma_g(b)E_L(u_{gh}^*x)p_L = \sigma_g(b)X.\]

By Lemma \ref{corner}, $p_L(Q \rtimes L)p_L = Q^L p_L$, so there exists a unique (non-zero) $a \in Q^L$ such that $X = ap_L$. By uniqueness, we see that $a$ satisfies $ab = \sigma_g(b)a$ for all $a \in Q^K$. We conclude that $g \in K_0$, and hence $\supp(x) \subset K$. By Theorem \ref{sgpthm} we have that $x \in Q \rtimes K$.
\end{proof}

\begin{prop}\label{relcomK}
Consider an action $\sigma: G \actson Q$ of a totally disconnected group $G$ which is properly outer relative to a compact open subgroup $K_0 < G$ whose action is minimal. For all open subgroup $K < K_0$, we have $(Q^K)' \cap M = LK \subset Q \rtimes K$.
\end{prop}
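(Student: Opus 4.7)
The plan is to combine the two preceding lemmas, using $Q^{K_0} \subset Q^K$ as the bridge. The reverse inclusion $LK \subset (Q^K)' \cap M$ is immediate, since for every $k \in K$ the unitary $u_k$ acts on $Q$ by $\sigma_k$, which fixes $Q^K$ pointwise, so $u_k$ commutes with $Q^K$ inside $M$; hence the whole von Neumann subalgebra $LK \subset Q \rtimes K \subset M$ commutes with $Q^K$.

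For the nontrivial inclusion, I would proceed as follows. Take $x \in (Q^K)' \cap M$. Because $K$ is a subgroup of $K_0$, we have $Q^{K_0} \subset Q^K$, and consequently
\[(Q^K)' \cap M \subset (Q^{K_0})' \cap M.\]
The hypothesis that $\sigma$ is properly outer relative to $K_0$ lets us apply Lemma \ref{relcomK1} with the role of $K$ played by $K_0$, yielding $(Q^{K_0})' \cap M \subset Q \rtimes K_0$. Thus $x$ already sits in the smaller crossed-product $Q \rtimes K_0$, i.e.\ $x \in (Q^K)' \cap (Q \rtimes K_0)$.

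At this stage $x$ is an element of the crossed product by the compact group $K_0$, and $K < K_0$ is open (which is automatic, since $K$ is open in $G$ and $K_0$ is open in $G$, so $K$ is open in $K_0$). The $K_0$-action on $Q$ is minimal by assumption, so Lemma \ref{relcomK2} applies and gives $(Q^K)' \cap (Q \rtimes K_0) = LK$. Therefore $x \in LK$, which completes the proof.

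The whole argument is essentially a two-line deduction from Lemmas \ref{relcomK1} and \ref{relcomK2}; there is no genuine obstacle here, the only thing to check carefully is the monotonicity $Q^{K_0} \subset Q^K$ and the fact that $K$ remains open inside the compact group $K_0$, both of which are immediate. The substance of the statement is really contained in the two lemmas it invokes.
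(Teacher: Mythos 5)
Your proof is correct and is essentially identical to the paper's: both pass from $(Q^K)'\cap M$ to $(Q^{K_0})'\cap M$, apply Lemma \ref{relcomK1} with $K_0$ to land in $Q\rtimes K_0$, and then conclude with Lemma \ref{relcomK2} using minimality of the $K_0$-action. The extra details you check (monotonicity $Q^{K_0}\subset Q^K$, openness of $K$ in $K_0$, and the easy inclusion $LK\subset (Q^K)'\cap M$) are exactly the ones the paper leaves implicit.
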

\begin{proof}
Fix $x \in (Q^K)' \cap M$. By Lemma \ref{relcomK1} applied to $K_0$, we see that $x \in Q \rtimes K_0$. Since $K_0$ acts minimally, the result follows from Lemma \ref{relcomK2}. 
\end{proof}

In order to prove Theorem \ref{intermediate subfactor}, we will use a convex combination argument. The following lemma will be needed.

\begin{lem}[\cite{HI15}, Lemma 4.4]\label{HI15}
Consider a von Neumann algebra $\cM$, a weight $\Theta \in \cP(\cM),$ and a $\sigma$-weakly closed convex subset $\cC$ of $\cM$.

If $\cC$ is bounded both in the operator norm and the  $\Vert \cdot \Vert_\Theta$-norm, then $\Lambda_\Theta(\cC)$ is $\Vert \cdot \Vert_\Theta$-closed in $L^2(\cM,\Theta)$.
\end{lem}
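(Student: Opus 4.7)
The plan is to combine Banach--Alaoglu, Mazur's theorem, and the lower semi-continuity of the weight $\Theta$ to translate $L^2$-norm convergence of a sequence in $\Lambda_\Theta(\cC)$ back into the $\sigma$-strong convergence needed to identify its limit. Given a sequence $(x_n)$ in $\cC$ with $\Lambda_\Theta(x_n) \to \xi$ in $\Vert \cdot \Vert_\Theta$, I will first use the operator-norm boundedness of $\cC$ and Banach--Alaoglu to extract a subnet $(x_{n_\alpha})$ converging $\sigma$-weakly to some $x \in \cM$. Since $\cC$ is $\sigma$-weakly closed, $x \in \cC$, so $\Lambda_\Theta(x)$ is the natural candidate for $\xi$.

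Next, I will upgrade this $\sigma$-weak convergence to a $\sigma$-strong convergence of convex combinations via Mazur's theorem. For every $N$, the element $x$ lies in the $\sigma$-weak closure of the tail $\{x_n : n \geq N\}$; since convex sets in $\cM$ have the same $\sigma$-weak and $\sigma$-strong closures (Hahn--Banach), $x$ lies in the $\sigma$-strong closure of the convex hull of each such tail. This yields a net $(y_\beta)$ in $\cC$, with each $y_\beta$ a finite convex combination of elements $x_n$ drawn from a tail $n \geq N_\beta$ with $N_\beta \to \infty$, and $y_\beta \to x$ $\sigma$-strongly. Convexity of $\Lambda_\Theta$ combined with the triangle inequality and $\Vert \Lambda_\Theta(x_n) - \xi \Vert_\Theta \to 0$ then forces $\Lambda_\Theta(y_\beta) \to \xi$ in $L^2$-norm as well.

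Finally, I will invoke lower semi-continuity of $\Theta$ twice. First, $y_\beta^* y_\beta \to x^* x$ $\sigma$-weakly (since $(y_\beta)$ is operator-norm bounded), so $\Theta(x^*x) \leq \liminf_\beta \Theta(y_\beta^* y_\beta) = \Vert \xi \Vert_\Theta^2 < \infty$, giving $x \in \cn_\Theta$. Second, applying the same principle to the net $(y_\beta - y_{\beta_0})^*(y_\beta - y_{\beta_0}) \to (x - y_{\beta_0})^*(x - y_{\beta_0})$ for fixed $\beta_0$ yields
\[ \Vert \Lambda_\Theta(x) - \Lambda_\Theta(y_{\beta_0}) \Vert_\Theta^2 \leq \liminf_\beta \Vert \Lambda_\Theta(y_\beta) - \Lambda_\Theta(y_{\beta_0}) \Vert_\Theta^2 = \Vert \xi - \Lambda_\Theta(y_{\beta_0}) \Vert_\Theta^2. \]
Letting $\beta_0 \to \infty$ forces $\Lambda_\Theta(x) = \xi$, completing the proof. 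The most delicate step will be the Mazur construction: I need to ensure that the convex combinations $(y_\beta)$ are drawn from arbitrarily late tails of $(x_n)$ so that the $L^2$-norm convergence of $\Lambda_\Theta(y_\beta)$ to $\xi$ is preserved, which is exactly where the convexity of $\cC$ is essential; the remaining steps are routine applications of Banach--Alaoglu and normality of the weight.
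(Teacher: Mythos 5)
The paper does not prove this lemma---it is quoted verbatim from \cite{HI15}, Lemma 4.4---so there is no internal proof to compare against. Your argument is correct and is essentially the standard one: extract a $\sigma$-weak limit point $x\in\cC$ by Banach--Alaoglu, pass to convex combinations from arbitrarily late tails to upgrade to $\sigma$-strong convergence while keeping $L^2$-control (this is where convexity and $\sigma$-weak closedness of $\cC$ enter), and then use $\sigma$-weak lower semicontinuity of the normal weight twice to identify $\Lambda_\Theta(x)$ with the $L^2$-limit. The only cosmetic quibble is that ``convexity of $\Lambda_\Theta$'' should read ``linearity of $\Lambda_\Theta$ on $\cn_\Theta(\cM)$''; all the substantive steps (the subnet being eventually in each tail, $y_\beta^*y_\beta\to x^*x$ $\sigma$-weakly for a bounded $\sigma$-strongly convergent net, and the final squeeze) are sound.
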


\begin{proof}[Proof of Theorem \ref{intermediate subfactor}]
Fix an intermediate subfactor $Q \subset N \subset M$. Set $H := \{g \in G \, , \, u_g \in N\}$, so that $Q \rtimes H \subset N$.
Take $x_0 \in N$. We will show that the support of $x_0$ is contained in $H$. This will conclude by Theorem \ref{sgpthm}.

Denote by $\Tr$ a normal faithful semi-finite trace on $Q$, and denote by $\Psi \in \cP(M)$ the corresponding dual weight.

{\bf Step 1.} For all compact open subgroups $K < K_0$, all $g \in G$ and all finite trace projection $q \in Q^K$, if there exists $x \in N$ such that $qp_KE_{K}(u_g^*x)qp_{K} \neq 0$, then $u_gqp_K \in (N)_1 p_K$.

Fix $K < K_0$, $q \in Q^K$, $x \in N$ and $g \in G$ such that $qp_KE_{K}(u_g^*x)qp_K \neq 0$.
Put $x' := \sigma_g(q)xq \in N$. We have $p_KE_{K}(u_g^*x')p_{K} = qp_KE_{K}(u_g^*x)qp_{K} \neq 0$.
By Lemma \ref{corner}, there exists a unique $a \in Q^K$ such that $p_KE_{K}(u_g^*x')p_{K} = ap_K$. Note that $a \in qQq$. The element $y := \sigma_g(a^*)x' \in \sigma(q)Nq$ satisfies $p_KE_K(u_g^*y)p_K = a^*ap_K \neq 0$. Lemma \ref{Hilbert computations}, implies that 
\[\Psi(p_Ku_g^*yp_K) = \Psi(p_KE_K(u_g^*y)p_K) = \frac{1}{m_G(K)}\Vert a \Vert_{\Tr}^2 \neq 0.\]

Denote by $\cC \subset N$ the ultraweak closure of the convex hull 
\[\cC_0 := \conv (\{\sigma_g(u)yu^* \, , \, u \in \cU(qQ^{K}q)\}).\]
We proceed as in the proof of \cite[Theorem 4.3 (5) $\Rightarrow$ (6)]{HI15} to show that $\cC p_{K}$ is bounded in $\Vert \cdot \Vert_\Psi$-norm, except that we have slightly weaker assumptions.
Since $Q$ is contained in the centralizer of $\Psi$, it is clear from the triangle inequality that $\Vert Xp_{K} \Vert_\Psi \leq \Vert yp_{K} \Vert_\Psi$ for all $X \in \cC_0$. Now if $X \in \cC$ is arbitrary, take a net $(X_j)_{j \in J} \subset \cC_0$ that converges ultrastrongly to $X$. Then $(X_j^*X_j)_{j \in J}$ converges ultraweakly to $X^*X$, and since $\Psi$ is lower ultraweakly semi-continuous (see \cite[Theorem VII.1.11.(iii)]{Ta03}) we get that 
\[\Vert Xp_K \Vert_\Psi^2 = \Psi(p_KX^*Xp_K) \leq \liminf_j \Psi(p_KX_j^*X_jp_K) \leq \Vert yp_{K} \Vert_\Psi^2.\]
Note moreover that $ \Vert yp_{K} \Vert_\Psi < + \infty$ since $y = yq$ and $qp_K \in \cn_\Psi(M)$. 
Thus the ultraweakly closed convex set $\cC p_K \subset M$ is bounded both in the operator norm and in the $\Vert \cdot \Vert_\Psi$-norm and we can apply Lemma \ref{HI15} to it. In particular we can find $z \in \cC$, such that $zp_K$ is the unique element of $\cC p_K$ with minimal $\Vert \cdot \Vert_\Psi$-norm\footnote{Note however that $z$ itself needs not be unique.}. 

Let us check that such an element $z \in N$ satisfies $zp_K \neq 0$. First, since $qp_K \in \cn_\Psi(M)$, $\Psi(qp_K \cdot p_Kq)$ is a bounded normal linear functional on $M$, which is constant on $u_g^*\cC_0$. Hence it is constant on $u_g^*\cC$. It follows:
\[\Psi(qp_{K}u_g^*zp_{K}q) = \Psi(qp_{K}u_g^*yp_{K}q) = \Psi(p_{K}u_g^*(\sigma_g(q)yq)p_{K}) = \Psi(p_{K}u_g^*yp_{K}) \neq 0.\]
So we indeed find that $zp_K \neq 0$.

Note also that $\cC p_K$ is globally invariant under the affine action $\cU(qQ^Kq) \actson \sigma(q)Mq$ given by $u \cdot X := \sigma_g(u)Xu^*$. Since $\Psi$ centralizes $Q$, this action is $\Vert \cdot \Vert_\Psi$-isometric, so it fixes $zp_K$. Equivalently, we have $u_g^*zp_K \in (qQ^Kq)' \cap M$. Moreover $u_g^*zp_K$ belongs to $qMq$. By Proposition \ref{relcomK}, we have that $(qQ^Kq)' \cap qMq \subset qLK$, so $u_g^*zp_K \in qLKp_K$. Since the projection $p_K$ is both central and minimal inside $LK$, there exists a non-zero scalar $\lambda \in \C$ such that 
\begin{equation}\label{partial goal}u_g^*zp_K = \lambda qp_K.\end{equation}
We obtain that $u_gqp_K = (z/\lambda)p_K \in Np_K$. Unfortunately we don't have, a priori, any control on how small $\vert \lambda \vert$ is, so $z/\lambda$ could have a very large operator norm. To get around this issue, we would like to identify polar parts, but we need extra commutations properties. We apply the convex combination argument a second time.

Arguing as above, one can find an element $z_1 \in \cC'$, where $\cC' \subset \cC$ is the ultraweak closure of $\conv(\{ \sigma_g(u)zu^* \, , \, u \in \cU(qQ^Kq)\}) \subset N$, such that $z_1^*zp_K$ is the unique element in $(\cC')^* zp_K$ with minimal $\Vert \cdot \Vert_\Psi$-norm\footnote{Note that $(\cC')^* zp_K$ is nothing but the ultraweakly closed convex hull of $\{uz^*zp_Ku^* \, , \, u \in \cU(qQ^Kq)\}$.}. Then $z_1$ enjoys the following properties:
\begin{itemize}
\item $z_1 \in N$ and $z_1p_K = zp_K$, because for all $u \in \cU(qQ^Kq)$, $\sigma_g(u)zu^*p_K = \sigma_g(u)zp_Ku^* = zp_K$. In particular $z_1p_K \neq 0$;
\item $z_1^*zp_K \in (qQ^Kq)' \cap qMq$, by uniqueness of a $\Vert \cdot \Vert_\Psi$-norm minimizer inside $\cC' zp_K$. Hence Proposition \ref{relcomK} gives that $z_1^*zp_K = \lambda' qp_K$ for some non-zero scalar $\lambda' \in \C$.
\end{itemize}
In particular the above facts give $p_Kz_1^*z_1p_K = p_Kz_1^*zp_K = \lambda' qp_K$ and $\lambda'$ follows positive. Hence the equality $z_1^*z_1p_K = \lambda' qp_K = (\lambda' qp_K)^* = p_Kz_1^*z_1$ shows that $z_1^*z_1$ commutes with $p_K$. Write the polar decomposition $z_1 = u\vert z_1 \vert$, with $u \in N$ a partial isometry and $\vert z_1 \vert = (z_1^*z_1)^{1/2}$.
Note that $uqp_K$ is a partial isometry since $qp_K$ commutes with $z_1^*z_1$ (recall that $z_1 = z_1q$).

We have: $z_1p_K = u\vert z_1 \vert p_K = \sqrt{\lambda'} uqp_K$. Combining this with \eqref{partial goal}, we get
\[\lambda u_g qp_K = z p_K = z_1 p_K = \sqrt{\lambda'} uqp_K.\]
Hence $u_g qp_K$ and $u qp_K$ are proportional partial isometries; they have to coincide,
\[u_g qp_K = (uq) p_K \in (N)_1 p_K.\]
This proves Step 1.

{\bf Step 2.} For all compact open subgroups $K < K_0$ and $g \in G$, if there exists $x \in N$ such that $p_KE_K(u_g^*x)p_K \neq 0$, then $u_gp_K \in (N)_1p_K$.

Fix a subgroup $K < K_0$, $g \in G$ and $x \in N$ such that $p_KE_K(u_g^*x)p_K \neq 0$. 

Since $K$ acts minimally on $Q$ and $Q^K$ is with expectation inside $Q$, \cite[Lemma 5.1]{BHV15} implies that the trace $\Tr$ is still semi-finite on $Q^K$. Denote by $(q_i)_{i \in I}$ an increasing net of projections in $Q^K$ with finite trace that converges to $1$. 
Then there exists $i_0 \in I$ such that for all $i \geq i_0$, we have that $q_ip_KE_{K}(u_g^*x)p_{K}q_i \neq 0$.

By Step 1, we deduce that for all $i \geq i_0$, there exists $y_i \in (N)_1$ such that 
\begin{equation}\label{1112} u_gq_ip_K = y_i p_K.\end{equation}
Denote by $y \in (N)_1$ an ultraweak limit of the net $(y_i)_{i \in I}$. Then taking the corresponding limit in \eqref{1112} gives $u_gp_K = yp_K$, as desired.

{\bf Step 3.} The support of $x_0$ is contained in $H$. 

Take $g \in \supp(x_0)$. Fix a compact open subgroup $K < K_0$. Then $E_{K}(u_g^*x_0) \neq 0$. As the net of projections $(p_{L})_{L < G}$, converges strongly to the identity when $L \searrow \{e\}$, we have $p_L E_{K}(u_g^*x_0)p_{L} \neq 0$ for all small enough compact open subgroups $L$.

Take a normal open subgroup $L < K$, so that $p_L$ commutes with $u_g$ for all $g \in K$. By Lemma \ref{compactKdec}, we can write
\[p_LE_{K}(u_g^*x_0)p_{L}  = \sum_{h \in \lift(K/L)} p_Lu_hE_{L}(u_h^*E_{K}(u_g^*x_0))p_{L} = \sum_{h \in \lift(K/L)} u_hp_LE_{L}(u_{gh}^*x_0)p_{L}.\]
For all small enough compact open subgroups $L < G$ that is normal inside $K$, since the above sum is non-zero, we can find $h_{L} \in K$ such that $p_LE_{L}(u_{gh_{L}}^*x_0)p_{L} \neq 0$. Applying Step $2$ to $L$, $gh_{L}$ and $x_0$, we find an element $z_{L} \in (N)_1$ such that $u_{gh_{L}}p_{L} = z_{L}p_{L}$.

Note that there exists a net $(L_i)_i$ of compact open subgroups of $G$ which are all normal in $K$ and form a neighborhood basis of $e \in G$. This comes from the fact that any open subgroup $L < K$ has finite index inside $K$, so that $\cap_{h \in K} hLh^{-1}$ is an open normal subgroup of $K$, contained in $L$. By compactness of $K$ and $(N)_1$ (for the weak operator topology), there exist subnets of $(h_{L_i})_{i}$ and $(z_{L_i})_i$ that converge to elements $h \in K$ and $z \in (N)_1$, respectively. Taking ultraweak limits, we get that 
\[u_{gh} = \lim_i u_{gh_{L_i}}p_{L_i} = \lim_i z_{L_i}p_{L_i} = z \in N,\]
because the net of projections $(p_{L})_L$ converges ultrastrongly to the identity.
Hence $gh \in H$, and we conclude that $gK \cap H \neq \emptyset$.

As $K$ can be arbitrarily small and $H$ is closed, we conclude that $g \in H$. This finishes the proof of Step 3. 
Now the theorem follows from Theorem \ref{sgpthm}.
\end{proof}

\subsection{Examples of actions}
\label{section:examples}

\begin{prop}\label{Bernoulli}
Let $G$ be a totally disconnected group with a compact open subgroup $K < G$ such that $\cap_{g \in G} gKg^{-1} = \{e\}$ (let us call such a subgroup $K$ \emph{eventually malnormal}). Take a ${\rm II}_1$-factor $Q_0$, with trace $\tau_0$. The following $G$-actions satisfy the assumptions of Theorem \ref{intermediate subfactor}, so they are strictly outer and satisfy the Intermediate Subfactor Property.
\begin{enumerate}
\item The Bernoulli action $G \actson (Q_0,\tau_0)^{\ovt G/K}$ obtained by shifting indices; 
\item The free Bernoulli action $G \actson \ast_{G/K} (Q_0,\tau_0)$;
\end{enumerate}
\end{prop}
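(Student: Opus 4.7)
My plan is to set $K_0 := K$ and verify the two hypotheses of Theorem \ref{intermediate subfactor}: (i) the restriction of $\sigma$ to $K$ is minimal, and (ii) $\sigma$ is properly outer relative to $K$.

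For (i), I would analyse the $K$-action on the index set $G/K$. Its fixed-point set is $F := N_G(K)/K$ (which contains $eK$), and all other orbits $O_i$ are finite, of the form $K/(K \cap gKg^{-1})$ for a representative $g$, with stabilizer of finite index in $K$. Correspondingly, $Q$ decomposes as $Q_F \ovt M$, where $Q_F = Q_0^{\ovt F}$ is the factor at fixed positions (hence contained in $Q^K$) and $M$ is the tensor product over the non-fixed orbits; on each orbit-component of $M$, the $K$-action factors through a faithful action of a finite quotient of $K$. Example \ref{irreducible} then gives that the fixed-point subalgebra in each orbit-component is irreducible; tensoring these irreducible inclusions yields that $Q^K \subset Q$ is irreducible, so $K$ acts minimally on $Q$ (faithfulness following immediately from the eventual malnormality hypothesis).

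For (ii), suppose $a \in Q \setminus \{0\}$ and $g \in G$ satisfy $\sigma_g(x) a = ax$ for every $x \in Q^K$. Multiplying the relation by $a^*$ and using its adjoint yields that $a^*a$ commutes with $Q^K$ and that $aa^*$ commutes with $\sigma_g(Q^K) = Q^{gKg^{-1}}$. Both relative commutants are trivial by step (i) (applied also to $gKg^{-1}$, which is conjugate to $K$), so $u := a/\|a\|$ is a unitary in $Q$ with $uxu^* = \sigma_g(x)$ for every $x \in Q^K$. Specialising this to $x$ in the coordinate sub-factor $Q_0^{(eK)} \subset Q^K$ gives $uQ_0^{(eK)}u^* = Q_0^{(gK)}$. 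The conclusion $g \in K$ thus reduces to the following fact: in the Bernoulli factor $Q = Q_0^{\ovt I}$ with $I = G/K$ and $Q_0$ any non-trivial ${\rm II}_1$-factor, there is no unitary $u \in Q$ conjugating $Q_0^{(i)}$ onto $Q_0^{(j)}$ for distinct coordinates $i \neq j$; granting this, $g \notin K$ would imply $gK \neq eK$, contradicting $uQ_0^{(eK)}u^* = Q_0^{(gK)}$.

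The main technical obstacle is proving this non-conjugation statement. My approach uses the Wick-type Hilbert decomposition $L^2(Q) = \bigoplus_{S \subset I,\, |S| < \infty} L^2_0^{\otimes S}$ (where $L^2_0 := L^2(Q_0) \ominus \C$), writing $u = \sum_S u_S$ with $u_S \in L^2_0^{\otimes S}$. Testing the relation $ux = \sigma_g(x)u$ against $x \in Q_0^{(eK)}$ and projecting onto the low-order summands produces equations linking $\tau(u)$, $u_{\{eK\}}$, $u_{\{gK\}}$ and $u_{\{eK, gK\}}$; the essential input is the elementary fact that in a ${\rm II}_1$-factor left or right multiplication by a non-zero element of the factor is never Hilbert--Schmidt, which forces each of these low-order Wick components to vanish. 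Iterating with test elements in the other coordinate sub-factors $Q_0^{(hK)}$ for $hK \in F$, and exploiting the shift symmetry relating $u_S$ with $u_{gS}$, one then propagates the vanishing to all $u_S$, giving $u = 0$ and the desired contradiction. For the free Bernoulli action of item (2), an analogous scheme applies with the Wick decomposition replaced by the reduced-word decomposition of $L^2(\ast_I Q_0)$; the shift of reduced words under $g$ plays the same role.
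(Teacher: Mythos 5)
Your treatment of (i) matches the paper's for the tensor Bernoulli action (orbit-wise decomposition plus Example \ref{irreducible}), with one immaterial slip: the fixed-point set of $K \actson G/K$ is $\{gK : g^{-1}Kg \subseteq K\}$, which can be strictly larger than $N_G(K)/K$. More substantively, your orbit-wise tensor decomposition says nothing about item (2): for the free Bernoulli action, minimality is instead immediate from the fact that $Q_0^{(eK)}$ is contained in $Q^K$ and is already irreducible inside the free product. Your proposal never supplies this, and Example \ref{irreducible} as stated is a tensor-product statement.

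For (ii) you take a genuinely different route from the paper. The paper never passes to a unitary or to a non-conjugacy statement: it takes a weakly null net of unitaries $u_n \in \cU(Q_0^{(K)}) \subset \cU(Q^K)$, sets $v_n := \sigma_g(u_n) \in \cU(Q_0^{(gK)})$, and uses the mixing property $\tau(u_n a v_n b) \to 0$ --- which holds for distinct coordinates in both the tensor and the free situation, so both cases are handled simultaneously --- to get $\|a\|_2^2 = \lim_n \tau(u_n^* a^* v_n a) = 0$ directly from the intertwining relation. Note that even within your scheme the unitary reduction is superfluous: for an $L^2$-type obstruction you only need $a \neq 0$ as a vector of $L^2(Q)$.

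The genuine gap is in your proof of the non-conjugation claim. As described, the plan ``project onto low-order summands, force each low-order component to vanish, then propagate via shift symmetry'' does not go through: right multiplication by $x \in Q_0^{(eK)}$ and left multiplication by $\sigma_g(x) \in Q_0^{(gK)}$ send the chaos component $u_S$ into the components indexed by $S$ and $S \triangle \{eK\}$, respectively $S$ and $S \triangle \{gK\}$, so the equation read off at a fixed $S$ couples components of adjacent orders and annihilates none of them individually; moreover there is no evident symmetry relating $u_S$ to $u_{gS}$, since the hypothesis only involves the single element $g$ and test elements from $Q^K$. The correct organization is to group the components by $T := S \setminus \{eK, gK\}$: both multiplications preserve each block $L^2\big(Q_0^{(eK)} \ovt Q_0^{(gK)}\big) \ot \big(\bigotimes_{k \in T} L_0^2(Q_0^{(k)})\big)$, so the relation decouples into one intertwining identity per $T$ inside a multiple of the coarse $Q_0^{(gK)}$-$Q_0^{(eK)}$ bimodule; identifying that bimodule with Hilbert--Schmidt operators, the intertwiner $T_\xi$ yields a positive trace-class element $T_\xi T_\xi^*$ affiliated with a ${\rm II}_1$ factor in standard form, hence $T_\xi = 0$. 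This is the rigorous form of your Hilbert--Schmidt heuristic, and once set up this way no propagation step is needed. Finally, the one-sentence dispatch of the free case hides real work: the reduced-word decomposition is not a coordinate-wise tensor decomposition, and exhibiting $L^2(\ast_{G/K} Q_0)$ as (essentially) a multiple of the coarse $Q_0^{(gK)}$-$Q_0^{(eK)}$ bimodule has to be carried out; the paper's mixing argument sidesteps all of this.
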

\begin{proof} Let us first check separately the minimality condition on the $K$-action for each case.

(1) Put $Q = Q_0^{\ovt G/K}$. Since $K$ is eventually malnormal inside $G$, it acts faithfully on $G/K$ and hence on $Q$. Since $K$ is compact open in $G$ it is commensurated, and hence it acts on $G/K$ with finite orbits. Let us denote by $\cO_j$, $j \in J$ these orbits and by $Q_j := Q_0^{\ovt \cO_j}$. Example \ref{irreducible} shows that for all $j$, the fixed point algebra $Q_j^{K} \subset Q_j$ is an irreducible subfactor. Since $Q^K$ contains $\ovt_{j \in J} Q_j^K$, it is irreducible inside $Q = \ovt_{j \in J} Q_j $. So $K$ acts minimally.

(2) Put $Q = \ast_{G/K} Q_0$. Again, since $K$ is not normal inside $G$, it acts faithfully on $Q$. Because of the free product situation, it is clear that the copy of $Q_0$ located at the label $K \in G/K$ is irreducible inside $Q$. Moreover this algebra is contained in $Q^K$, so $K$ acts faithfully.

We now check the relatively properly outer condition simultaneously for both situations. Take $g \in G \setminus K$ and decompose $Q$ as a product 
\[Q \simeq Q_0^{(K)} \ovt Q_0^{(gK)} \ovt P \qquad (\text{or } Q \simeq Q_0^{(K)} \ast Q_0^{(gK)} \ast P),\]
where $Q_0^{(K)}$ and $Q_0^{(gK)}$ are the copies of $Q_0$ in position $K \in G/K$ and $gK \in G/K$ and $P$ is the tensor product (or free product) of all the remaining copies of $Q_0$. 

In both the tensor situation and the free situation, one easily checks that for all nets $(u_n)_n \subset \cU(Q_0^{(K)})$ and $(v_n)_n \subset \cU(Q_0^{(gK)})$ that converge weakly to $0$, and for all $a,b \in Q$, one has
\[\lim_n \tau(u_nav_nb) = 0, \text{ where $\tau$ is the trace of $Q$.}\]
Assuming that there exists $a \in Q_1$ such that $\sigma_g(x)a = ax \text{ for all } x \in Q_1^K$, we take for $(u_n)_n \subset \cU(Q_0^{(K)}) \subset \cU(Q^K)$ any net of unitaries that converges weakly to $0$, and we set $v_n = \sigma_g(u_n)$. We get
\[\Vert a \Vert_2 = \lim_n \tau(u_n^*a^*au_n) = \lim_n \tau(u_na^*v_na) = 0.\]
Hence $a = 0$, as desired.
\end{proof}

More generally, one can easily check the relative outerness condition in Theorem \ref{intermediate subfactor} when the action has a large commutant thanks to the following fact.

\begin{lem}\label{centlem}
Consider an action $G \actson Q$ and a closed subgroup $K < G$ whose action is minimal.
Assume that the centralizer $\Gamma$ of $G$ in $\Aut(Q)$ satisfies: for all $x \in Q \setminus \C$, there exists $\gamma \in \Gamma$ such that $\gamma(x) \notin \C x$.

Then the action is properly outer relative to $K$ if and only if the only elements of $G$ acting trivially on $Q^K$ are the elements of $K$.
\end{lem}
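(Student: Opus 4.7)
The plan is to handle the two implications separately. The forward direction is immediate: if the action is properly outer relative to $K$ and $g \in G$ fixes $Q^K$ pointwise, then $\sigma_g(x) \cdot 1 = 1 \cdot x$ for all $x \in Q^K$, so taking $a = 1$ in Definition~\ref{POR} gives $g \in K$.

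For the converse, assume that every $g \in G$ acting trivially on $Q^K$ lies in $K$, and fix $g \in G$ together with a nonzero $a \in Q$ satisfying $\sigma_g(x) a = a x$ for all $x \in Q^K$. First I would take adjoints and replace $x$ by $x^*$ in the intertwining relation to obtain the companion identity $a^* \sigma_g(x) = x a^*$. Combining the two gives
\[ a^*a \, x = a^* \sigma_g(x) a = x \, a^*a \quad \text{and} \quad aa^* \, \sigma_g(x) = a x a^* = \sigma_g(x) \, aa^*\]
for all $x \in Q^K$. Minimality of the $K$-action yields $(Q^K)' \cap Q = \C$, and applying the automorphism $\sigma_g$ also gives $(Q^{gKg^{-1}})' \cap Q = \sigma_g((Q^K)' \cap Q) = \C$. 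Therefore both $a^*a$ and $aa^*$ are scalars, $Q$ is a factor, and $a$ is a nonzero scalar multiple of a unitary; after rescaling we may assume $a \in \cU(Q)$.

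Next I would invoke the centralizer $\Gamma$. Each $\gamma \in \Gamma$ commutes with every $\sigma_h$, so $\gamma(Q^K) = Q^K$, and applying $\gamma$ to the intertwining relation yields $\sigma_g(y) \gamma(a) = \gamma(a) y$ for all $y \in Q^K$. Repeating the computation of the previous paragraph shows that $a^* \gamma(a)$ commutes with $Q^K$:
\[ a^* \gamma(a) \, y = a^* \sigma_g(y) \gamma(a) = y \, a^* \gamma(a),\]
hence $a^* \gamma(a) \in \C$, and multiplying on the left by $a$ (using $aa^* = 1$) gives $\gamma(a) \in \C a$ for every $\gamma \in \Gamma$. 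The standing assumption on $\Gamma$ now forbids $a \notin \C$, so $a$ itself is a scalar. The intertwining relation then degenerates to $\sigma_g(x) = x$ for all $x \in Q^K$, and the hypothesis delivers $g \in K$.

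The only substantive step is the passage from ``$a$ is a scalar multiple of a unitary'' to ``$a$ is a scalar'', which is exactly where the centralizer assumption $\Gamma$ is used; everything else is a routine coupling of the intertwining relation with minimality of the $K$-action on both sides of $g$.
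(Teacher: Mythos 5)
Your proof is correct and follows essentially the same route as the paper's: reduce to $a$ unitary via minimality, apply $\gamma\in\Gamma$ to the intertwining relation to get $a^*\gamma(a)\in(Q^K)'\cap Q=\C$, and conclude $a\in\C$ from the hypothesis on $\Gamma$. The only difference is that you spell out the details (the companion identity $a^*\sigma_g(x)=xa^*$ and the scalarity of $a^*a$ and $aa^*$) that the paper compresses into ``since $K$ acts minimally, we can assume that $a$ is a unitary.''
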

\begin{proof}
The only if part is trivial. Conversely assume that $K$ is precisely the set of elements of $G$ that act trivially on $Q^K$. Take $g \in G$ such that there exists a non-zero $a \in Q$ satisfying $\sigma_g(x)a = ax$ for all $x \in Q^K$. Since $K$ acts minimally, we can assume that $a$ is a unitary. For all automorphism $\gamma \in \Gamma$, we have
\[\sigma_g(\gamma(x))\gamma(a) = \gamma(a) \gamma(x), \quad \text{for all } x \in Q^K.\]
Moreover, using again that $\gamma$ commutes with the $G$-action, we see that $\gamma(Q^K) = Q^K$, and the previous equation reads as
\[\sigma_g(x)\gamma(a) = \gamma(a)x, \quad \text{for all } x \in Q^K.\]
In particular, we find that $a^*\gamma(a) \in (Q^K)' \cap Q = \C$. Hence for all $\gamma \in \Gamma$, we have $\gamma(a) \in \C a$. By our assumption on $\Gamma$, this leads to $a \in \C$, and hence $g$ fixes $Q^K$ pointwise. Thus $g \in K$ by assumption.
\end{proof}

Note that the condition on the centralizer $\Gamma$ in the above lemma is fulfilled as soon as $\Gamma$ admits a subgroup $\Gamma_0$ that preserves a state on $Q$, and such that $Q^{\Gamma_0} = \C$. Moreover, if $Q$ is a ${\rm II}_1$-factor the trace is invariant under any automorphism, and hence only the second condition needs to be verified.

We deduce the following result in the spirit of Vaes' examples \cite[Theorem 5.1]{V05}

\begin{cor}\label{diagac}
Fix a totally disconnected group and a compact open subgroup $K < G$.
Consider any faithful action $G \actson Q_0$ on a ${\rm II}_1$-factor such that 
\[\{g \in G \, \vert \, g_{|Q^K} = \id\} = K.\]

Then the diagonal action $G \actson (Q,\tau) := (Q_0,\tau_0)^{\ovt \N}$ satisfies the assumptions of Theorem \ref{intermediate subfactor}.
\end{cor}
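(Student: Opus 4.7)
The plan is to verify the hypotheses of Theorem \ref{intermediate subfactor} for the diagonal action on $Q := (Q_0, \tau_0)^{\ovt \N}$, which is a ${\rm II}_1$-factor with product trace $\tau$. It suffices to show that $K$ acts minimally on $Q$ and that the $G$-action is properly outer relative to $K$; both will follow from Lemma \ref{centlem} combined with a conditional-expectation argument.

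To produce the centralizer needed in Lemma \ref{centlem}, fix a bijection $\phi : \N \to \Z$ and let $T \in \Aut(Q)$ be the automorphism obtained by pulling back the Bernoulli shift on $Q_0^{\ovt \Z}$ along the isomorphism induced by $\phi$. Since the diagonal $G$-action on $Q$ commutes with every slot-permutation, $T$ lies in the centralizer $\Gamma$ of $G$ in $\Aut(Q)$. Moreover $Q^{\langle T \rangle} = \C$ by Bernoulli ergodicity. Because $Q$ is a ${\rm II}_1$-factor, the remark after Lemma \ref{centlem} yields the desired centralizer condition with $\Gamma_0 := \langle T \rangle$.

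For minimality, take $x \in (Q^K)' \cap Q$ and consider the conditional expectation $E_n : Q \to Q_0^{\ovt n}$ onto the first $n$ tensor factors. Since $(Q_0^{\ovt n})^K \subset Q^K$ and $E_n$ is $Q_0^{\ovt n}$-bimodular, one has $E_n(x) \in ((Q_0^{\ovt n})^K)' \cap Q_0^{\ovt n}$. Using the $K$-isotypic decomposition $Q_0 = \oplus_\pi Q_0^\pi$ together with the factor property of $Q_0$, the traceless part of this relative commutant decomposes into sums of pure tensors $b_1 \otimes \cdots \otimes b_n$ whose slotwise factors satisfy $\tau_0(b_i) = 0$. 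The factorization of the product trace then implies that any such tensor is $L^2$-orthogonal to every element of $Q_0^{\ovt m}$ whenever $m < n$. Combining this observation with the strong $L^2$-convergence $E_n(x) \to x$ forces $x = \tau(x) \cdot 1 \in \C$, establishing minimality.

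With the centralizer condition and minimality in hand, Lemma \ref{centlem} reduces relative proper outerness to the identity $\{g \in G : g|_{Q^K} = \id\} = K$, where $Q^K$ now refers to the infinite tensor $Q$. The inclusion $K \subset \{\cdots\}$ is immediate; conversely, if $g \in G$ fixes $Q^K$ pointwise, then $g$ fixes the sub-tensor $(Q_0^K)^{\ovt \N} \subset Q^K$ pointwise, and restricting to any single slot yields $g|_{Q_0^K} = \id$. The standing hypothesis then forces $g \in K$, completing the verification of Theorem \ref{intermediate subfactor}'s hypotheses. The main technical obstacle is the minimality step: identifying the structure of the relative commutant $((Q_0^{\ovt n})^K)' \cap Q_0^{\ovt n}$ via the $K$-isotypic decomposition and controlling its $L^2$-decay as $n \to \infty$, where for non-abelian $K$ one invokes the Peter--Weyl decomposition to handle the isotypic components.
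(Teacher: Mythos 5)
Your overall skeleton matches the paper's: establish minimality of $K \actson Q$, get the centralizer condition from shift automorphisms (your single Bernoulli shift $T$ with $Q^{\langle T\rangle}=\C$ works just as well as the full permutation group the paper uses), and then apply Lemma \ref{centlem} to reduce relative proper outerness to the pointwise-fixing hypothesis. The centralizer step and the final reduction are fine. The problem is the minimality step, which contains a genuine gap.

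Your argument hinges on the structural claim that the traceless part of $((Q_0^{\ovt n})^K)' \cap Q_0^{\ovt n}$ lies in the closed span of elementary tensors $b_1 \ot \cdots \ot b_n$ with every $\tau_0(b_i)=0$; orthogonality to $Q_0^{\ovt m}$ for $m<n$ and the conclusion $E_m(x)=\tau(x)1$ all rest on this. But the corollary only assumes that $G \actson Q_0$ is \emph{faithful}; nothing prevents $K$ from acting on $Q_0$ by inner automorphisms, say $\sigma_k = \Ad(\pi(k))$ for a faithful unitary representation $\pi$ of $K$ in $Q_0$. In that case $\pi(k)^{\ot n}$ lies in $((Q_0^{\ovt n})^K)' \cap Q_0^{\ovt n}$ (it implements the diagonal action, which fixes $(Q_0^{\ovt n})^K$ pointwise), and whenever $\tau_0(\pi(k)) \neq 0$ its traceless part has nonzero components in mixed slots such as $Q_0^\circ \ot 1 \ot \cdots \ot 1$; it is not orthogonal to $Q_0^{\ovt m}$. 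So the claimed decomposition fails, and with it the convergence argument. (Even under the extra assumption that $K \actson Q_0$ is minimal, the assertion that the diagonal action on $Q_0^{\ovt n}$ has trivial relative commutant for finite $n\ge 2$ is a nontrivial fact requiring its own proof.) The point of passing to the \emph{infinite} tensor power is precisely that it washes out such inner parts: the paper gets minimality of $K \actson Q$ in one line by quoting Vaes' theorem \cite[Theorem 5.1]{V05}, which says that the diagonal action on $(Q_0,\tau_0)^{\ovt \N}$ induced by any faithful action on a ${\rm II}_1$-factor is strictly outer, and then Lemma \ref{minaction} converts strict outerness of the compact group $K$ into minimality. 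Replacing your finite-stage $E_n$ argument by this citation repairs the proof; the rest of what you wrote then goes through.
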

\begin{proof}
The $K$-action on $Q_0$ being faithful, \cite[Theorem 5.1]{V05} implies that it is strictly outer, and hence minimal. Moreover the centralizer $\Gamma$ of such a diagonal action contains all shift automorphisms obtained by permuting indices. Hence the condition on $\Gamma$ appearing in Lemma \ref{centlem} is satisfied (see the comment after Lemma \ref{centlem}).

Thus the result follows from Lemma \ref{centlem}.
\end{proof}

\begin{rem}
Note that the Bernoulli shift action $G \actson Q_0^{\ovt G/K}$ as in Lemma \ref{Bernoulli} is sometimes a special case of diagonal action as in Corollary \ref{diagac}. For instance this happens if $Q_0$ is the hyperfinite factor. However this is not clear why this should be the case when $Q_0$ is a prime factor. Hence even the strict outerness for such actions does not follow from \cite[Theorem 5.1]{V05}.
\end{rem}

Before moving to the next section, let us briefly explain how to adapt our argument to cover some actions on type ${\rm III}$ factors.

\begin{rem}\label{large centralizer}
Let $G$ be a totally disconnected group and let $K<G$ be an eventually malnormal compact open subgroup, see Lemma \ref{Bernoulli}. Let $Q_0$ be an arbitrary diffuse factor admitting a faithful normal state $\phi_0$ with large centralizer (meaning that $(Q_0^{\phi_0})' \cap Q_0 = \C$). Then the Bernoulli shift $G \actson (Q_0,\phi_0)^{\ovt G/K}$ satisfies the Intermediate Subfactor Property.

Let us briefly explain. Denote by $(Q,\phi) := (Q_0,\phi_0)^{\ovt G/K}$. Then the centralizer $Q^\phi$ of $\phi$ is irreducible inside $Q$, and it is invariant under the $G$-action. One can show that for all compact open subgroups $L < G$ which is a finite intersection of conjugates of $K$, the fixed point algebra $Q_L := (Q^\phi)^L$ satisfies $Q_L' \cap (Q \rtimes G) \subset L(L)$. This is done by following the proof of Lemma \ref{relcomK1}, and by noting that the action is properly outer relative to $L$.

Therefore one can use the averaging argument from the proof of Theorem \ref{intermediate subfactor} but instead we average with elements in $\cU((Q^\phi)^L)$ for small groups $L$ as above. Note moreover that in this case since $\phi$ is a state (and not a weight), one does not need to bother with the projection $q$ appearing in Step 1 of the proof of Theorem \ref{intermediate subfactor}.
\end{rem}

In fact the above remark also applies for free Bernoulli actions with weaker assumptions on $(Q_0,\phi_0)$, but we will not elaborate on this.

\section{Existence of conditional expectations and operator valued weights}\label{sectionCEOVW}

In this Section we discuss various results about existence (or non-existence) of conditional expectations and operator valued weights in connection with Izumi, Longo, and Popa's paper \cite{ILP98}.

Let us start our discussion by investigating the existence of conditional expectations/operator valued weights for pairs of the form $Q \rtimes H \subset Q \rtimes G$ associated with closed subgroups $H < G$.

\subsection{Proof of Theorem \ref{ovwsg}}

Let us start with two lemmas which rely on the notion of support.

\begin{lem}\label{diagcomm}
Consider two actions $G \actson Q$ and $G \actson P$ and the diagonal action of $G$ on $Q \ovt P$. If the action $G \actson Q$ is strictly outer, then
\[Q' \cap ((Q \ovt P) \rtimes G) = P.\]
\end{lem}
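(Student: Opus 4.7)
Put $M := (Q \ovt P) \rtimes G$. The inclusion $P \subset Q' \cap M$ is immediate since $Q$ and $P$ commute inside $Q \ovt P \subset M$. Observe also that strict outerness of $\sigma : G \actson Q$ implies $Q' \cap Q = \C$, so $Q$ is a factor and $Q' \cap (Q \ovt P) = P$; moreover $\sigma$ is properly outer. The plan is then: given $x \in Q' \cap M$, show that $\supp(x) \subset \{e\}$; then Theorem \ref{sgpthm} applied to the trivial subgroup (equivalently Corollary \ref{Beurling}) forces $x \in Q \ovt P$, and hence $x \in Q' \cap (Q \ovt P) = P$.

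Suppose $g \in \supp(x)$ and set $y := u_g^* x \in M$. Using the relation $u_g^* z = \sigma_{g^{-1}}(z) u_g^*$ for $z \in Q$, a direct computation gives $y z = \sigma_{g^{-1}}(z)\, y$ for all $z \in Q$; moreover $y \neq 0$ and $e \in \supp(y)$. The key step is then to extract from $y$ a \emph{non-zero} element $a \in Q \ovt P$ satisfying the intertwining relation
\[ a (z \ot 1) = (\sigma_{g^{-1}}(z) \ot 1)\, a \qquad \text{for all } z \in Q. \]
The idea is to localize $y$ near $e$ by means of a Fourier multiplier $m_\phi$ with $\phi \in A(G)$ compactly supported and $\phi(e) \neq 0$: then $m_\phi(y) \neq 0$ (since $e \in \supp(y)$) and still satisfies the intertwining relation because $m_\phi$ is $(Q \ovt P)$-bimodular. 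Applying the Plancherel operator-valued weight $T : M \to Q \ovt P$ then yields the candidate $a := T(m_\phi(y))$, and bimodularity of $T$ propagates the intertwining relation to $a$. The main technical obstacle I expect is guaranteeing $a \neq 0$ for a suitable choice of $\phi$: this should follow from the approximation of $M$ by $\cK(G, Q \ovt P)$ combined with the fact that $e \in \supp(y)$, possibly via an additional averaging argument in the spirit of the proof of Theorem \ref{intermediate subfactor}.

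With such a non-zero $a \in Q \ovt P$ in hand, I would slice it down into $Q$ to invoke strict outerness. A short computation using the intertwining relation shows that both $a^* a$ and $a a^*$ commute with $Q \ot 1$ inside $Q \ovt P$, hence lie in $P$. Next, for each $\omega \in P_*$ the slice $L_\omega(a) \in Q$ (defined by the normal extension of $L_\omega(q \ot p) = \omega(p)\, q$) satisfies $L_\omega(a)\, z = \sigma_{g^{-1}}(z)\, L_\omega(a)$ for all $z \in Q$. Since slices separate points of $Q \ovt P$ and $a \neq 0$, we may choose $\omega$ so that $L_\omega(a) \neq 0$. A further computation then yields $L_\omega(a)^* L_\omega(a), L_\omega(a) L_\omega(a)^* \in Q' \cap Q = \C$, so both equal $\Vert L_\omega(a) \Vert^2 > 0$, meaning that $L_\omega(a)/\Vert L_\omega(a) \Vert$ is a unitary of $Q$ implementing $\sigma_{g^{-1}}$. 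This contradicts proper outerness of $\sigma$ unless $g = e$, thereby forcing $\supp(x) \subset \{e\}$ and finishing the proof.
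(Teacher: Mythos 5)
The overall frame of your argument (show $\supp(x)\subset\{e\}$ for $x\in Q'\cap M$, then apply Corollary \ref{Beurling} and $Q'\cap(Q\ovt P)=P$) is sound, but the step you yourself flag as the ``main technical obstacle'' --- extracting a non-zero $a\in Q\ovt P$ intertwining $z\ot 1$ and $\sigma_{g^{-1}}(z)\ot 1$ from $y=u_g^*x$ --- is not a technicality: it is the whole difficulty, and it cannot be carried out with the tools you invoke. Observe that after the opening reduction your argument never again uses the actual hypothesis $Q'\cap(Q\rtimes G)=\C$; it only uses that $\sigma$ is properly outer and that $Q$ is a factor. Specializing to $P=\C$, your scheme would therefore prove that every properly outer action of a locally compact group on a factor is strictly outer, which is false for non-discrete $G$ (the paper recalls this in the introduction and in Section \ref{outact}, and Proposition \ref{factoriality} shows that an extra minimality hypothesis is genuinely needed even for totally disconnected groups). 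So no correct completion of the extraction step can exist at this level of generality. Concretely, the mechanisms you propose do not produce a bounded non-zero element of $Q\ovt P$: the Plancherel operator valued weight $T$ is only a weight, $m_\phi(y)$ has no reason to lie in the domain where $T$ is finite, and by Theorem \ref{ovwsg}(2) there is no normal conditional expectation from $M$ onto $Q\ovt P$ unless $G$ is discrete; the averaging argument of Theorem \ref{intermediate subfactor} is likewise unavailable here since neither total disconnectedness of $G$ nor semi-finiteness of $Q$ is assumed. (Your final slicing step, from a non-zero intertwiner $a\in Q\ovt P$ to a contradiction with proper outerness, is correct --- the problem is solely that such an $a$ need not exist.)

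The paper avoids Fourier coefficients altogether and uses strict outerness head-on: it embeds $M=(Q\ovt P)\rtimes G$ into $\tilde M=(Q\ovt P)\rtimes(G\times G)\simeq(Q\rtimes G)\ovt(P\rtimes G)$ via the diagonal subgroup. There $Q'\cap\tilde M=\bigl(Q'\cap(Q\rtimes G)\bigr)\ovt(P\rtimes G)=\C\ot(P\rtimes G)$ --- this is exactly where the hypothesis $Q'\cap(Q\rtimes G)=\C$ enters --- so any $x\in Q'\cap M$ has support contained both in $\{e\}\times G$ and in the diagonal of $G\times G$ (by Theorem \ref{sgpthm}), hence in $\{(e,e)\}$, and Corollary \ref{Beurling} concludes. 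You should rework your argument along these lines, replacing the extraction step by this tensor-product computation of the relative commutant.
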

\begin{proof}
Embed $G$ into $G \times G$ diagonally. Then $M := (Q \ovt P) \rtimes G$ is identified with a subalgebra of $\tilde{M} := (Q \ovt P) \rtimes (G \times G)$.  Note that
\[Q' \cap \tilde{M} \simeq \Big(Q' \cap (Q \rtimes G)\Big) \ovt (P \rtimes G) = \C \ot (P \rtimes G).\]
In particular any element $x \in Q' \cap M$, viewed as an element of $\tilde{M}$ has its support contained in the diagonal subgroup of $G \times G$ (because $x \in M$) and in $\{e\} \times G$ (because $x \in Q' \cap \tilde{M}$).
So such an element $x$ has its support contained in the trivial group. By Corollary \ref{Beurling}, we get $x \in Q' \cap (Q \ovt P) = P$, as wanted.
\end{proof}

\begin{lem}\label{posupp}
Consider an action $G \actson Q$ on an arbitrary von Neumann algebra and put $M = Q \rtimes G$. Take a weight $\Phi \in \cP(Q)$ and denote by $\Psi \in \cP(M)$ the corresponding dual weight.

For any non-zero $x \in \cn_\Psi(M)$, the support $\supp(x)$ has positive Haar measure inside $G$.
\end{lem}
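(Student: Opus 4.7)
The strategy is to exploit the standard identification of the GNS Hilbert space of the dual weight $\Psi$ with an $L^2$-space on $G$. Using \eqref{eqdualweight}, for every $F\in\cK(G,\cn_\Phi(Q))$ one computes directly
\[\Psi(F^*F) \;=\; \Phi(T_Q(F^*F)) \;=\; \Phi((F^*F)(e)) \;=\; \int_G \Vert F(s)\Vert_\Phi^2\,\ds.\]
Consequently the prescription $\Lambda_\Psi(F)\mapsto (s\mapsto \Lambda_\Phi(F(s)))$ is isometric on the $\Vert\cdot\Vert_\Psi$-dense subspace $\Lambda_\Psi(\cK(G,\cn_\Phi(Q)))\subset L^2(M,\Psi)$; since its image is dense in $L^2(G,L^2(Q,\Phi))$, it extends to an isometric isomorphism $I:L^2(M,\Psi)\to L^2(G,L^2(Q,\Phi))$. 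This is the standard description of dual-weight Hilbert spaces going back to Haagerup \cite{Ha78a,Ha78b}.

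The second ingredient is the compatibility of Fourier multipliers with $I$. For $\phi\in A(G)$ and $F\in\cK(G,\cn_\Phi(Q))$ the formula $m_\phi(F)(s)=\phi(s)F(s)$, already noted in the proof of part b) of the lemma following Proposition \ref{eqsupp}, yields
\[I(\Lambda_\Psi(m_\phi(F)))(s) \;=\; \phi(s)\,I(\Lambda_\Psi(F))(s).\]
Since $\phi$ is continuous and bounded (with $\Vert\phi\Vert_\infty\leq \Vert\phi\Vert_{A(G)}$), pointwise multiplication by $\phi$ is a bounded operator on $L^2(G,L^2(Q,\Phi))$. Pulling back via $I$ and extending by density, we conclude that $m_\phi$ maps $\cn_\Psi(M)$ into itself and that $I(\Lambda_\Psi(m_\phi(x)))=\phi\cdot I(\Lambda_\Psi(x))$ for every $x\in\cn_\Psi(M)$.

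With the $L^2$ picture at hand, fix a non-zero $x\in\cn_\Psi(M)$ and set $f:=I(\Lambda_\Psi(x))$, a non-zero element of $L^2(G,L^2(Q,\Phi))$. For each $g\notin\supp(x)$, choose $\phi_g\in A(G)$ with $\phi_g(g)\neq 0$ and $m_{\phi_g}(x)=0$; the previous paragraph gives $\phi_g\cdot f=0$ almost everywhere. By continuity of $\phi_g$, there is an open neighborhood $V_g$ of $g$ on which $\vert\phi_g\vert\geq \vert\phi_g(g)\vert/2$, so $f$ vanishes a.e.\ on $V_g$. The essential support of $f$ is a measurable subset $E\subset G$ of positive Haar measure and is $\sigma$-finite, hence $\sigma$-compact up to null sets by inner regularity of $m_G$; covering $E\cap(G\setminus\supp(x))$ by countably many $V_g$'s (Lindel\"of property of $\sigma$-compact sets) we obtain that $f$ vanishes a.e.\ on $E\setminus\supp(x)$. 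Therefore $E\subset\supp(x)$ up to null sets, and $m_G(\supp(x))\geq m_G(E)>0$, as desired.

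The main potentially delicate point is the identification of $L^2(M,\Psi)$ with $L^2(G,L^2(Q,\Phi))$ and the compatibility of Fourier multipliers with it. Both facts are classical consequences of the construction of dual weights but must be invoked with care; once they are in hand, the remainder of the argument is a standard covering argument for essential supports of $L^2$-functions.
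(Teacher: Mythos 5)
Your proposal is correct in substance and shares the paper's basic strategy --- identify $\Lambda_\Psi(x)$ with a non-zero element of $L^2(G,L^2(Q,\Phi))$ via Haagerup's description of the dual weight, then show that the essential support of that function is contained, up to null sets, in $\supp(x)$ --- but it implements the key comparison differently. The paper works with the characterization of the support by the projections $P(\Omega)$ from Proposition \ref{eqsupp}: writing $x=\pi_\ell(\xi)$ for a left bounded vector $\xi$, it tests $P(g\Omega)xP(\Omega)$ against right bounded vectors $f\eta$ with $f$ running through an approximate identity, so that $\pi_r(f\eta)\xi$ converges to $\pi_r(\eta)\xi(\cdot)$. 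You instead use the multiplier characterization of the support together with the identity $\Lambda_\Psi(m_\phi(x))=\phi\cdot\Lambda_\Psi(x)$, which reduces the lemma to a covering argument for essential supports of $L^2$-functions. Your route is slicker once that identity is available for \emph{all} $x\in\cn_\Psi(M)$, but that is precisely where the analytic content hides: passing from $\cK(G,\cn_\Phi(Q))$ to $\cn_\Psi(M)$ requires that $\cK(G,\cn_\Phi(Q))$ be a core for $\Lambda_\Psi$ simultaneously for the $\sigma$-strong$^*$ topology on $M$ and the $\Vert\cdot\Vert_\Psi$-topology, together with the closedness of $\Lambda_\Psi$; without this, the bounded extension of multiplication by $\phi$ need not a priori agree with $\Lambda_\Psi\circ m_\phi$ on all of $\cn_\Psi(M)$. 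This is indeed part of Haagerup's construction, but it should be invoked explicitly --- it is roughly the same amount of modular-theoretic work that the paper spends on left and right bounded vectors.

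One further point to tighten: a subset of a $\sigma$-compact space need not be Lindel\"of, so the covering of $E\cap(G\setminus\supp(x))$ by countably many $V_g$'s is not automatic as stated. The clean fix is to apply inner regularity to the finite measure $A\mapsto\int_A\Vert f(s)\Vert^2\ds$, which is concentrated on a $\sigma$-finite set and hence Radon: every compact subset of the open set $G\setminus\supp(x)$ is covered by finitely many $V_g$'s and is therefore null, whence $\int_{G\setminus\supp(x)}\Vert f(s)\Vert^2\ds=0$ and the essential support of $f$ is contained in $\supp(x)$ up to a null set. Neither issue is a gap in the ideas; both are repairs of bookkeeping.
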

\begin{proof}
We will use the description of the dual weight relying on the Hilbert algebra approach, \cite{Ha78a}. According to this approach, given the weight $\Phi$, there exists a left Hilbert algebra $A_\Phi$ satisfying the following properties.
\begin{itemize}
\item  The Hilbert completion of $A_\Phi$ is isomorphic to $L^2(G,\cH)$ and the left von Neumann algebra $L(A_\Phi)$ is identified with $Q \rtimes G$;
\item The dual weight $\Psi$ corresponds to the canonical weight on $L(A_\Phi)$ associated with the Hilbert algebra $A_\Phi$; 
\item For any $\Phi$-right bounded vector $\eta \in \cH$ and any $f \in \cK(G)$, the function $f\eta \in \cK(G,\cH)$ defined by $f\eta : g \mapsto f(g)\eta$ is a right bounded vector for $A_\Phi$. The corresponding operator $\pi_r(f\eta)$ is given by
\begin{equation}\label{riac} \pi_r(f\eta)\xi (g) = \int_G \Big( \pi_r(\eta)\sigma_s(\xi(gs))\Big) f(s^{-1}) \ds, \quad \text{for all } \xi \in L^2(G,\cH), \, g \in G,\end{equation}
where $\pi_r(\eta)$ denotes the operator on $\cH = L^2(Q,\Phi)$ associated with the right bounded vector $\eta$\footnote{Here we follow \cite{Ha78b} and use the uniqueness of the standard form to identify canonically $\cH$ with $L^2(Q,\Phi)$.}.
\end{itemize}
Now take a non-zero $x \in \cn_\Psi(M)$. By the above facts and \cite[Chapter VII.2]{Ta03}, there exists a non-zero left bounded vector $\xi \in L^2(G,\cH)$ such that $x = \pi_\ell(\xi)$ (that is, $x$ is the operator extending the left multiplication by $\xi$).
We claim that the support of $\xi$ as a function in $L^2(G,\cH)$ is contained in the support of $x$. In fact the equality holds, but we clearly only need this inclusion to deduce the lemma.

Take $g \in G$ in the function support of $\xi$. Take an open neighborhood $\Omega \subset G$ of the identity element $e$. We have to show that $P(g\Omega)\pi_\ell(\xi)P(\Omega)$ is non-zero. For all $f \in \cK(\Omega)$ and all $\Phi$-right bounded vector $\eta \in \cH$, we have
\[P(g\Omega)\pi_\ell(\xi)P(\Omega)(f\eta) = P(g\Omega)\pi_\ell(\xi)(f\eta) = P(g\Omega)\pi_r(f\eta)\xi.\]
We now check that for a suitable choice of $f$ and $\eta$ the above quantity is non-zero.

Since $g$ is in the function support of $\xi$, there exists $\eta_0 \in \cH$ such that
$g\Omega \cap \{h \in G \, \vert \, \langle \xi(h),\eta_0 \rangle \neq 0\}$ has positive measure. Since the set $\{ \pi_r(\eta_1)^*\eta_2 \, , \, \eta_1,\eta_2 \in \cH, \Phi \text{-right bounded}\}$ is dense inside $\cH$, we may find two $\Phi$-right bounded vectors $\eta_1, \eta_2 \in \cH$ such that $\pi_r(\eta_1)^*\eta_2$ is sufficiently close to $\eta_0$ so that the set $g\Omega \cap \{h \in G \, \vert \, \langle \xi(h),\pi_r(\eta_1)^*\eta_2 \rangle \neq 0\}$ has positive Haar measure. In particular, the function $\zeta \in L^2(G,\cH)$ defined by $h \mapsto \pi_r(\eta_1)\xi(h)$ satisfies $P(g\Omega)\zeta \neq 0$. Put $C := \Vert P(g\Omega)\zeta \Vert_2 > 0$.

For $f \in \cK(G)$, denote by $\check{f}$ the function $h \mapsto f(h^{-1})$. For all non-negative function $f \in \cK(G)$ such that $\int_G f = 1$ we have, by \eqref{riac}, 
\begin{align*}
\Vert \pi_r(\check{f}\eta_1)\xi - \zeta\Vert_2^2 & = \int_G \Vert (\pi_r(\check{f}\eta_1)\xi)(h) - \zeta(h)\Vert^2 \dh\\
& = \int_G \Vert \int_G f(s)\pi_r(\eta_1)\Big(\sigma_s(\xi(hs)) - \xi(h)\Big)\ds \Vert^2 \dh\\
& \leq \int_G \Big(\int_G f(s)\Vert \pi_r(\eta_1)\Vert \Vert \sigma_s(\xi(hs)) - \xi(h) \Vert \ds \Big)^2 \dh\\
& = \Vert \pi_r(\eta_1)\Vert^2 \int_{G \times G \times G} f(s)f(t) \Vert \sigma_s(\xi(hs)) - \xi(h) \Vert \Vert \sigma_t(\xi(ht)) - \xi(h) \Vert \ds\dt \dh\\
& \leq \Vert \pi_r(\eta_1)\Vert^2 \int_{G \times G} f(s)f(t) \Vert \delta_G(s)^{-1/2}\rho_G(s)(\xi) - \xi \Vert \Vert \delta_G(t)^{-1/2}\rho_G(t)(\xi) - \xi \Vert \ds\dt\\
& = \Big(\Vert \pi_r(\eta_1)\Vert \int_{G} f(s) \Vert \delta_G(s)^{-1/2}\rho_G(s)(\xi) - \xi \Vert \ds\Big)^2,
\end{align*}
where $\rho_G$ is the representation defined in \eqref{rhoG}. Since $\rho_G$ is a continuous representation and $\delta_G$ is a continuous function on $G$, we get that if $f$ is supported on a small enough neighborhood of $e$, then $\Vert \pi_r(\check{f}\eta_1)\xi - \zeta\Vert_2 < C/2$. By definition of $C$ we get:
\[ \Vert P(g\Omega)\pi_r(\check{f}\eta_1)\xi \Vert \geq \Vert P(g\Omega)\zeta \Vert - \Vert \pi_r(\check{f}\eta_1)\xi - \zeta \Vert \geq C/2.\]
So there indeed exists a $\Phi$-right bounded vector $\eta_1$ and a function $f_1 = \check{f}$ which is supported on $\Omega$ such that
\[P(g\Omega)xP(\Omega)(f_1\eta_1) = P(g\Omega)\pi_\ell(\xi)(f_1\eta_1) \neq 0.\qedhere\]
\end{proof}

\begin{proof}[Proof of Theorem \ref{ovwsg}]
We prove the two facts separately.

(1) First assume that the modular functions $\delta_G$ and $\delta_H$ coincide on $H$. Then by \cite[Theorem 3.2]{Ha78a}, for any weight $\Phi \in \cP(Q)$, the dual weight $\Psi_G \in \cP(Q \rtimes G)$ and $\Psi_H \in \cP(Q \rtimes H)$ satisfy
\[\sigma_t^{\Psi_G}(x) = \sigma_t^{\Psi_H}(x), \quad \text{for all } x \in Q \rtimes H, t \in \R.\]
Therefore there exists a nfs operator valued weight $T \in \cP(Q \rtimes G, Q \rtimes H)$ by \cite[Theorem 5.1]{Ha79b}.

Conversely, assume that $\cP(Q \rtimes G,Q \rtimes H)$ is non-trivial. By \cite[Theorem 5.9]{Ha79b}, and Remark \ref{rkcomm} we deduce that there exists an operator valued weight $T \in \cP(\tilde M,M)$, where $M = Q \rtimes G$ and $\tilde M = (\cL^\infty(G/H) \ovt Q) \rtimes G$ (because $\tilde{M}$ is isomorphic to the basic construction of $Q \rtimes H \subset M$).

Our intermediate goal is to deduce that there exists a $G$-invariant nfs weight on $A := \cL^\infty(G/H)$. Unfortunately, we don't know a priori that $T$ is semi-finite on $A$. To get around this issue we will exploit the fact that the action is strictly outer and use modular theory.

Let us consider the following operator valued weights in $\cP(\tilde M,Q)$.
\begin{itemize}
\item $T_1 := T_G \circ T$, where $T_G \in \cP(Q \rtimes G,Q)$ is the Plancherel operator valued weight.
\item $T_\phi := (\phi \ot \id) \circ \tilde{T}_G$, where $\tilde{T}_G \in \cP(\tilde{M},A \ovt Q)$ is the Plancherel operator valued weight and $\phi \ot \id \in \cP(A \ovt Q,Q)$ is the tensor product operator valued weight associated to some weight $\phi \in \cP(A)$ and the identity map on $Q$ (see \cite[Theorem 5.5]{Ha79b}).
\end{itemize}
Fix $\phi \in \cP(A)$. By Lemma \ref{diagcomm}, we have that $Q' \cap \tilde{M} = A$, hence the Connes Radon-Nikodym cocycle $(DT_1:DT_\phi)_t$ in the sense of \cite[Definition 6.2]{Ha79b} takes values into $A$, and it is a $1$-cocycle for the flow $\sigma_t^{T_\phi}$.
By construction, for any weight $\psi \in \cP(Q)$, the weight $\psi \circ T_\phi$ is simply the dual weight associated to $\phi \ot \psi \in \cP(A \ovt Q)$. Hence $(\sigma_t^{T_\phi})_{|A} = \sigma_t^{\phi} = \id$, since $A$ is abelian. We conclude that $(u_t)_t := (DT_1:DT_\phi)_t$ is a one parameter subgroup of unitaries of $A$.

By \cite[Th\'eor\`eme 1.2.4]{Co73}, there exists a nfs weight $\phi' \in \cP(A)$ such that $(D\phi':D\phi)_t = u_t$ for all $t \in \R$.

{\bf Claim 1.} The weight $\phi'$ is $G$-invariant.
We will denote generically by the letter $\sigma$ all the $G$ actions. We fix $g \in G$ and show that the Connes Radon-Nikodym derivative $(D\phi' \circ \sigma_g:D\phi')_t$ is equal to $1$ for all $t \in \R$. We have
\begin{equation}\label{inv1} 
(D\phi' \circ \sigma_g:D\phi')_t = (D\phi' \circ \sigma_g:D\phi \circ \sigma_g)_t(D\phi \circ \sigma_g:D\phi)_t(D\phi:D\phi')_t.
\end{equation}
By Lemma \ref{RNautom}, we have that $(D\phi' \circ \sigma_g:D\phi \circ \sigma_g)_t = \sigma_g^{-1}((D\phi':D\phi)_t) = \sigma_g^{-1}(u_t)$. Hence \eqref{inv1} becomes
\begin{equation}\label{inv2} 
(D\phi' \circ \sigma_g:D\phi')_t = \sigma_g^{-1}(u_t)(D\phi \circ \sigma_g:D\phi)_tu_t.
\end{equation}

We will show that the right hand side above is equal to $1$ by computing all the terms in the equality
\begin{equation}\label{inv3}
(DT_1^g : DT_1)_t = (DT_1^g:DT_\phi^g)_t(DT_\phi^g:DT_\phi)_t(DT_\phi:DT_1)_t.
\end{equation}
Here we denoted by $T_1^g$ (resp. $T_\phi^g$) the operator valued weight $\sigma_g^{-1} \circ T_1 \circ \Ad(u_g) \in \cP(\tilde{M},Q)$ (resp. $\sigma_g^{-1} \circ T_\phi \circ \Ad(u_g)$).

Take a weight $\psi \in \cP(Q)$. By definition of the Connes Radon-Nikodym derivative for operator valued weights, we have
\begin{align*}
(DT_1^g:DT_\phi^g)_t & = (D(\psi \circ T_1^g):D(\psi \circ T_\phi^g))_t\\
& = (D(\psi \circ \sigma_g^{-1} \circ T_1) \circ \Ad(u_g):D(\psi \circ \sigma_g^{-1} \circ T_\phi) \circ \Ad(u_g))_t\\
& = \sigma_g^{-1}((D(\psi \circ \sigma_g^{-1} \circ T_1):D(\psi \circ \sigma_g^{-1} \circ T_\phi))_t)\\
& = \sigma_g^{-1}((DT_1:DT_\phi)_t),
\end{align*}
where the third equality follows from Lemma \ref{RNautom}.
Hence \eqref{inv3} becomes
\begin{equation}\label{inv4}
(DT_1^g : DT_1)_t = \sigma_g^{-1}(u_t)(DT_\phi^g:DT_\phi)_tu_t^*.
\end{equation}
Now \cite[Theorem 3.1]{Ha78b} is easily seen to imply that $T_1^g = \delta_G(g)T_1$ while $T_\phi^g = \delta_G(g)T_{\phi \circ \sigma_g}$. So $(DT_1^g : DT_1)_t = \delta_G(g)^{it}$ and
\[(DT_\phi^g:DT_\phi)_t  = \delta_G(g)^{it} (DT_{\phi \circ \sigma_g}:DT_\phi)_t = \delta_G(g)^{it} (D\phi \circ \sigma_g:D\phi)_t\]
Altogether, \eqref{inv4} can be rewritten
\begin{equation}\label{inv5}
\delta_G(g)^{it} = \sigma_g^{-1}(u_t)\delta_G(g)^{it} (D\phi \circ \sigma_g:D\phi)_tu_t^*.
\end{equation}
And we see that the right hand side of \eqref{inv2} is equal to $1$, proving Claim 1.

Recall that $A = \cL^\infty(G/H)$ is the subalgebra of right $H$-invariant functions inside $L^\infty(G)$. Denote by $q: G \to G/H$ the quotient map. The formula $B \subset G/H \mapsto \phi'(\mathbf 1_{q^{-1}(B)})$ defines a measure $\nu$ on the Borel $\sigma$-algebra of $G/H$. This measure is $G$-invariant and non-zero (because $\phi'$ is faithful). 

{\bf Claim 2.} The $G$-invariant measure $\nu$ is finite on every compact set of $G/H$.

Since $\phi'$ is semi-finite (and faithful), there exists a Borel set $B \subset G/H$ such that $0 < \nu(B) < \infty$. Take a compactly supported non-negative function on $G$, $f \in \cK(G)$, $f \neq 0$. Then the function $f \ast 1_B$ defined as follows is continuous:
\[(f \ast 1_B)(xH) := \int_G f(g)1_B(g^{-1}xH)\dm_G(g), \quad xH \in G/H.\]
By Fubini-Tonelli's Theorem, we have the key equation
\begin{align*}
\int_{G/H} (f\ast 1_B)\dnu & = \int_{G/H}\int_G f(g)1_B(g^{-1}xH)\dm_G(g)\dnu(xH)\\
& = \int_G f(g) \nu(gB)\dm_G(g)\\
& = \nu(B)\int_G f\dm_G.
\end{align*}
This key equation tells us first that the continuous non-negative function $f \ast 1_B$ is non-zero. So there exists an open set $U \subset G/H$ and $\alpha > 0$ such that $1_U \leq \alpha (f \ast 1_B)$. The key equation also tells us that $f \ast 1_B$ is $\nu$-integrable, and in particular, $\nu(U) < \infty$. Since any compact set $K \subset G/H$ can be covered by finitely many translates of $U$, Claim 2 follows. 

By \cite[Corollary B.1.7]{BHV08} we deduce from the existence of $\nu$ that the modular functions $\delta_G$ and $\delta_H$ must coincide on $H$. This proves (1).

(2) As mentioned earlier in the paper, if $H$ is open inside $G$, then the indicator function $1_H$ is continuous and positive definite on $G$. Then the associated multiplier (see \cite[Theorem 3.1.a]{Ha78b}) gives the desired conditional expectation from $Q \rtimes G$ onto $Q \rtimes H$.

Conversely, assume that $H$ is not open inside $G$. If the modular functions $\delta_G$ and $\delta_H$ do not coincide on $H$, then part (1) ensures that there is no nfs operator valued weight from $Q \rtimes G$ onto $Q \rtimes H$, and in particular, no conditional expectation. 

Assume now that the modular functions do coincide. 
Fix a nfs weight $\Phi \in \cP(Q)$, and denote by $\Psi_H \in \cP(Q \rtimes H)$ and $\Psi_G \in \cP(Q \rtimes G)$ the associated dual weights. As we saw in the proof of (1), \cite[Theorem 5.1]{Ha79b} implies that there exists an operator valued weight $T \in \cP(Q \rtimes G,Q\rtimes H)$ such that $\Psi_H \circ T = \Psi_G$. As the inclusion $Q \rtimes H \subset Q \rtimes G$ is irreducible, it suffices to show that $T$ is unbounded, by \cite[Theorem 6.6]{Ha79b}.

Fix an element $f \in \cK(H,Q)$. Note that since $H$ is not open inside $G$ it has measure $0$ inside $G$\footnote{Indeed for any positive measure Borel set $A$ inside $G$, the product $A^{-1}A$ contains a neighborhood of the identity.}. In particular, since $\supp(f) \subset H$, Lemma \ref{posupp} implies that $f \notin \cn_{\Psi_G}(Q \rtimes G)$. In contrast, $f \in \cn_{\Psi_H}(Q \rtimes H)$, and hence the expression $\psi_f := \Psi_H(f^* \cdot f)$ defines a normal positive linear functional on $Q \rtimes H$. We have
\[T(1)(\psi_f) = \psi_f \circ T(1) = \Psi_H \circ T(f^*f) = \Psi_G(f^*f) = + \infty.\]
Hence $T(1) \in \widehat{Q \rtimes H}_+ \setminus Q \rtimes H$, and so $T$ is not bounded. The proof is complete.
\end{proof}

\subsection{When conditional expectations do exist. Applications to Hecke pairs}

Before proving Theorem \ref{theo:Heckepair}, let us mention that the argument of Choda applies beyond the setting of discrete groups.

\begin{thm}[\cite{Ch78}, Theorem 3]\label{Chodalc}
Consider a strictly outer action $G \actson Q$ of a locally compact group and take a von Neumann subalgebra $N \subset Q \rtimes G$ that contains $Q$ and that is the range of a faithful normal conditional expectation $E_N: Q \rtimes G \to N$.

Then $N$ is of the form $Q \rtimes H$ for some open subgroup $H$ of $G$.
\end{thm}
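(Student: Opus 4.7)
The plan is to mimic Choda's Fourier-coefficient argument, replacing individual Fourier coefficients (unavailable for general locally compact $G$) by a direct computation of $E_N$ on the unitaries $u_g$. I would start by setting $H := \{g \in G : u_g \in N\}$, which is clearly a subgroup of $G$, so that $Q \rtimes H \subset N$ is automatic. The two things left to show are that $H$ is open in $G$ and that $N \subset Q \rtimes H$.

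The first key step is to prove that for every $g \in G$, $E_N(u_g) = \mathbf{1}_H(g)\, u_g$. Setting $y := E_N(u_g)$ and using the $Q$-bimodularity of $E_N$ together with the commutation relation $a u_g = u_g \sigma_g^{-1}(a)$, one obtains $ay = y\sigma_g^{-1}(a)$ for all $a \in Q$. Rearranging (via $\sigma_g^{-1}(a)\, u_g^{-1} = u_g^{-1}\, a$) shows that $yu_g^{-1}$ commutes with $Q$, and strict outerness yields $yu_g^{-1} \in Q' \cap (Q \rtimes G) = \C$. Hence $y = \lambda_g u_g$ for some $\lambda_g \in \C$, and the idempotency $E_N^2 = E_N$ forces $\lambda_g \in \{0, 1\}$, with $\lambda_g = 1$ precisely when $g \in H$. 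Strict outerness is essential here.

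Next I would deduce that $H$ is open. Since $g \mapsto u_g$ is strongly continuous and $E_N$ is normal, the map $g \mapsto E_N(u_g)$ is ultraweakly continuous. Given $g_0 \in H$, pick a normal linear functional $\omega$ on $Q \rtimes G$ with $\omega(u_{g_0}) \neq 0$ (such $\omega$ exists because $u_{g_0} \neq 0$ and normal functionals separate points). Then on a sufficiently small neighborhood $V$ of $g_0$, both $\omega(u_g)$ and $\omega(E_N(u_g)) = \lambda_g \omega(u_g)$ stay close to $\omega(u_{g_0}) \neq 0$, which forces $\lambda_g = 1$ for all $g \in V$; hence $V \subset H$.

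Once $H$ is known to be open, the excerpt recalls the existence of a normal faithful conditional expectation $E_H : Q \rtimes G \to Q \rtimes H$ satisfying $E_H(au_g) = \mathbf{1}_H(g)\, au_g$. By the first step, $E_N$ and $E_H$ coincide on the $*$-subalgebra $\mathrm{span}\{au_g : a \in Q,\, g \in G\}$, which is ultraweakly dense in $Q \rtimes G$; normality of both maps then yields $E_N = E_H$ everywhere. For $x \in N$ this gives $x = E_N(x) = E_H(x) \in Q \rtimes H$, proving the converse inclusion. The main technical point is the first step, where strict outerness is indispensable; everything else amounts to continuity and density, which I expect to be routine.
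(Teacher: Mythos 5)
Your proof is correct, and its core step is exactly the paper's: from $Q$-bimodularity of $E_N$ and strict outerness you get $E_N(u_g)\in\C u_g$, and the scalar is forced to be $\mathbf{1}_H(g)$ (the paper deduces the value $1$ from $u_g\in N$ rather than from $E_N^2=E_N$, a cosmetic difference), after which $N\subset Q\rtimes H$ follows by normality and ultraweak density of $\mathrm{span}\{au_g\}$. Where you genuinely diverge is the openness of $H$. The paper obtains it as a corollary of its Theorem \ref{ovwsg}(2) (``$Q\rtimes H\subset Q\rtimes G$ is with expectation iff $H$ is open''), whose proof is a substantial piece of modular theory involving operator valued weights and the dual weight; your argument is instead a short direct one: $g\mapsto u_g$ is ultraweakly continuous, $E_N$ is normal, so for a normal functional $\omega$ with $\omega(u_{g_0})\neq 0$ both $\omega(u_g)$ and $\lambda_g\omega(u_g)$ stay near $\omega(u_{g_0})\neq 0$ on a neighborhood of $g_0\in H$, forcing $\lambda_g=1$ there. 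This is a genuine and welcome simplification for this particular theorem: it makes the statement self-contained, at the price of not recovering the stronger ``only if'' direction of Theorem \ref{ovwsg}(2) in general (which the paper needs elsewhere). Your last step, identifying $E_N$ with the canonical expectation $E_H$ once $H$ is known to be open, is also fine, though slightly more than needed --- the paper simply observes $E_N(Q\rtimes G)\subset Q\rtimes H$ and applies $E_N$ to $N$, which avoids invoking the existence of $E_H$ altogether.
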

\begin{proof}
As usual, consider the closed subgroup $H$ of $G$ defined by $H = \{g \in G \, \vert \, u_g \in N\}$, so that $Q \rtimes H \subset N$. Let us show that the converse inclusion also holds.

Since $N$ contains $Q$, we have $u_g^*E_N(u_g) \in Q' \cap (Q \rtimes G)$, for all $g \in G$. As the action is strictly outer, we deduce that for any $g \in G$, $E_N(u_g)$ is a scalar multiple of $u_g$. If this scalar multiple is non-zero, this means that $u_g \in N$, and hence the scalar in question must be $1$. So we obtain the following computation:
\[ E_N(au_g) = \mathbf{1}_{\{g \in H\} } au_g \in Q \rtimes H, \quad \text{ for all } a \in Q \text{ and } g \in G.\]
Since $E_N$ is normal we deduce by linearity and density that $E_N(Q \rtimes G) \subset Q \rtimes H$. Thus $N \subset Q \rtimes H$, and we have equality.
The fact that $H$ is open follows from Theorem \ref{ovwsg}.
\end{proof}

We now mention a lemma that provides existence of conditional expectations. It follows from the main technical result of \cite{ILP98}.

\begin{lem}\label{lem:E_L}
Consider a compact open subgroup $K<G$ in a locally compact group.
Let $\sigma:G\actson Q$ be a strictly outer action and put $M := Q \rtimes G, N:= Q\rtimes K.$
Then any intermediate \VNA\ $N\subset L \subset M$ is the range of a normal faithful conditional expectation $E_L:M \to L$.
\end{lem}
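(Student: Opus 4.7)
The proof reduces to the main technical result of Izumi-Longo-Popa \cite{ILP98} about irreducible inclusions of discrete type. Two of its three hypotheses are immediate. First, strict outerness of $\sigma$ gives $Q' \cap M = \C$; since $Q \subset N$, we deduce $N' \cap M \subset Q' \cap M = \C$, so the inclusion $N \subset M$ is irreducible and both algebras are factors. Second, because $K$ is open in $G$, Theorem \ref{ovwsg}(2) provides a normal faithful conditional expectation $E_K \in \cP(M, N)$; explicitly, $E_K$ is the multiplier associated to the indicator function $\mathbf{1}_K$.

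The remaining task is to verify discreteness. By Remark \ref{rkcomm}, the Jones basic construction is
\[M_1 \;=\; \langle M, e_K\rangle \;\simeq\; (\cL^\infty(G/K) \ovt Q) \rtimes G,\]
where $\cL^\infty(G/K)$ identifies with $\ell^\infty(G/K)$ because $G/K$ is discrete. In this model the Jones projection $e_K$ corresponds to the minimal projection $\delta_K \in \ell^\infty(G/K) \subset M_1$, and the projections $\delta_{gK} := u_g \, \delta_K \, u_g^*$, indexed by $gK \in G/K$, form a pairwise orthogonal family summing to $1$, each of which is $M$-equivalent to $e_K$ via the unitary $u_g \in M \subset M_1$. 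Consequently, the dual operator valued weight $\widehat{E}_K \in \cP(M_1, M)$ takes the value $1$ on every $\delta_{gK}$ and is in particular semifinite. This is precisely the discreteness condition from \cite{ILP98}.

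With these three ingredients in hand, the main theorem of \cite{ILP98} on irreducible inclusions of discrete type applies and delivers, for any intermediate $N \subset L \subset M$, the desired normal faithful conditional expectation $E_L : M \to L$. The only delicate step is the verification of discreteness; once Remark \ref{rkcomm} has unwound the basic construction, the remaining content is elementary combinatorics on the discrete coset space $G/K$.
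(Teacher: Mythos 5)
Your overall strategy is the paper's: check irreducibility, produce $E_K$, verify discreteness of $N \subset M$, and invoke the Izumi--Longo--Popa machinery. The first two steps are fine. The gap is in the discreteness verification. The discreteness condition of \cite{ILP98} asks that the dual operator valued weight $\widehat{E}_K \in \cP(M_1,M)$ be semifinite \emph{on the relative commutant} $N' \cap M_1$ --- not on $M_1$ (where semifiniteness is automatic for any element of $\cP(M_1,M)$, so your phrase ``and is in particular semifinite'' carries no content as stated) and not on $Q' \cap M_1$. Your partition of unity $\{\delta_{gK}\}_{gK \in G/K}$ lives in $\ell^\infty(G/K) = Q' \cap M_1$, but these projections do not commute with $N$ unless $K$ is normal: one must first identify $N' \cap M_1$ with the algebra $\ell^\infty(K\backslash G/K)$ of $K$-bi-invariant functions (the paper does this via Lemma \ref{diagcomm}, which itself uses strict outerness), whose minimal projections are the double-coset indicators $1_{KgK}$. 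Semifiniteness of a weight on a larger algebra does not pass to subalgebras, so you still must compute $\widehat{E}_K(1_{KgK})$. This is exactly where compactness of $K$ enters: $KgK$ is a \emph{finite} union of $[K : K \cap gKg^{-1}]$ left cosets, whence $\widehat{E}_K(1_{KgK}) = [K : K\cap gKg^{-1}] < \infty$, and only then is the restriction of $\widehat{E}_K$ to $N' \cap M_1$ semifinite.

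A second, smaller omission: the result of \cite{ILP98} being invoked also requires control of the modular flow of $T := E_K \circ \widehat{E}_K$ on $N^c := N' \cap M_1$. Once $N^c$ is identified as the commutative algebra $\ell^\infty(K\backslash G/K)$ this is immediate from \cite[Theorem 6.6]{Ha79b} (the restriction of the modular flow equals the modular flow of the restriction, which is trivial on a commutative algebra), but it has to be recorded, and it again presupposes the computation of $N' \cap M_1$ that your argument skips.
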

\begin{proof}
We show that $N \subset M$ satisfy the assumptions of \cite[Corollary 3.1]{ILP98}. Observe that the inclusion $N\subset M$ is irreducible since the action of $G$ is strictly outer.
As mentioned earlier in the paper, since $K$ is open inside $G$, there exists a conditional expectation $E = E_K$ from $M = Q \rtimes G$ onto $N = Q \rtimes K$.

By Remark \ref{rkcomm}, the basic construction $M_1$ of the $N\subset M$ is isomorphic to $(Q\ovt \ell^\infty(G/K))\rtimes G$, where $G$ acts diagonally. The embedding $M \subset M_1$ is given by $au_g \mapsto (a \ot 1)u_g$ for all $a \in Q$, $g \in G$.
Note that $G/K$ is discrete since $K$ is open.
By Lemma \ref{diagcomm}, we have $Q' \cap M_1 = \ell^\infty(G/K)$. Hence, the relative commutant $N^c:=N'\cap M_1$ is isomorphic to the \VNA\ of $K$-bi-invariant maps $\ell^\infty(K\backslash G/ K)$.

We represent faithfully $M_1$ on $\cH\ot \ell^2(G/K)\ot L^2(G)$ in the obvious way. In this picture, the Jones projection $e_N$ is the orthogonal projection onto $\cH\ot \C \delta_K\ot L^2(G)$ where $\delta_K$ is the Dirac mass of the coset $K\in G/K$.
Consider the dual \OVW\ $\widehat E\in\cP(M_1,M)$ of $E$. We have that $\widehat E(e_N)=1$ by \cite[Lemma 3.1]{Ko85}.
This implies that $\widehat E(1_{KgK})=[K : K\cap gKg^{-1}]$, where $1_{KgK}$ is the characteristic function of $KgK, g\in G$.
Since $K<G$ is a compact open subgroup, the index $[K : K\cap gKg^{-1}]$ is finite for any $g\in G$ and thus the \OVW\ $T:= E\circ\widehat E$ is semi-finite on $N^c$. Given any weight $\theta\in\cP(N)$, \cite[Theorem 6.6]{Ha79b} states that the restriction $\sigma^{ \theta \circ T}\vert_{N^c}$ of the modular flow associated to $\theta\circ T$ to $N^c$ is equal to the modular flow of the restriction of $\theta\circ T$ to $N^c$. Therefore, $\sigma^T_t(x)=x$ for any $t\in \R, x\in N^c$ since $N^c$ is commutative.
Hence, the pair $N \subset M$ indeed satisfies the assumptions of \cite[Corollary 3.11]{ILP98}, which implies the lemma.
\end{proof}

\begin{proof}[Proof of Theorem \ref{theo:Heckepair}]
This follows immediately by combining Theorem \ref{Chodalc} and Lemma \ref{lem:E_L}.
\end{proof}

Let us now derive applications to crossed-products by Hecke pairs of groups. 
By definition, a {\it Hecke pair} is a pair of groups $(G,H)$ such that $H$ is a subgroup of $G$ which is commensurated (or almost normal) in $G$, in the sense that $H^g:=H\cap g H g^{-1}$ has finite index in $H$ and $gHg^{-1}$ for all $g\in G$.
We refer to \cite{AD} and \cite{Pa, Br16} for more details on facts below.

A typical example of a Hecke pair $(G,H)$ arises when $G$ is a subgroup of the automorphism group of a locally finite connected graph $\Gamma$ and $H$ is the subgroup of elements of $G$ that stabilize a given vertex of $\Gamma$. In fact, this example is somewhat generic, see \cite[Theorem 2.15]{AD12}.

To any Hecke pair $(G,H)$ can be associated its Schlichting completion $(\wG,\wH)$, which is a new Hecke pair, for which $\wG$ is a totally disconnected group and $\wH$ is a compact open subgroup of $\wG$. The precise construction goes as follows: view $G$ as a subgroup of $S_{G/H}$, the permutation group of $G/H$. Endow $S_{G/H}$ with the topology of pointwise converge (where $G/H$ is viewed as a discrete space), and define $\wG$ (resp. $\wH$) to be the closure of $G$ (resp. $H$) inside $S_{G/H}$. The idea of using the Schlichting completion to study operator algebras of Hecke pairs goes back to Tzanev \cite{Tz01}. The key observation is Proposition \ref{prop:corner-hecke} below.

We will say that an action $G \actson Q$ is {\it action of the Hecke pair} $(G,H)$ if it extends continuously to an action of the Schlichting completion $\wG$. Let us define the crossed-product von Neumann algebra associated with such an action.
The construction generalizes that of C$^*$-algebras associated with Hecke pairs (which correspond to the trivial action $G \actson \C$), which originate in \cite{BC95}. We also refer to \cite{AD} for a general treatment. 
Crossed-product $C^*$-algebras for actions of Hecke pairs were then defined in \cite{Pa} and the von Neumann algebraic version in \cite[Section 4]{Br16}. 

Let $\CQGH$ be the space of continuous functions $f:G\rightarrow Q$ such that 
$f(hgk)=\sigma_h(f(g))$ for any $g\in G$, $h,k\in H$, and such that the induced functions $\overline f :G/H\rightarrow Q$ is finitely supported.
Note that if $f\in\CQGH$ and $g\in G$, then $f(g)$ is fixed by $\sigma(H\cap gHg^{-1})$.
We define a multiplication and an involution $*$ on $\CQGH$ as follows.
$$f_1f_2(g)=\sum_{s\in\GH}f_1(s)\sigma_s(f_2(s^{-1}g)), \text{ for any } f_1,f_2\in\CQGH \text{ and } g\in G,$$
where $\GH$ is a system of representatives of $G/H$,
$$f^*(g)=\sigma_g(f(g^{-1})^*), \text { for any } f\in\CQGH\ \text{ and } g\in G.$$
The space $\CQGH$ endowed with those operations is a unital $*$-algebra.
It contains a copy of the fixed point \VNA\ $Q^H$ via the map $j(a)(g)=\chi_H(g) a, a\in Q^H, g\in G,$ where $\chi_H$ is the characteristic function of $H$.

Assume that $Q$ is standardly represented on a Hilbert space $\cH$ and denote again by $\sigma: G \to \cU(\cH)$ the canonical implementation of the action, see \cite{Ha75}. Let $L^2(G/H,\cH)$ be the Hilbert space of $L^2$-functions from $G/H$ to $\cH$, where $G/H$ is viewed as a discrete space (and endowed with the counting measure). Consider the subspace
\[\cK:=\{ \xi \in L^2(G/H,\cH) \, \vert \, \xi(h g H) = \sigma_h( \xi ( gH ) ) , \forall h\in H, gH\in G/H \}.\]

A similar proof to that of \cite[Proposition 5.1]{Br16} gives us the following.

\begin{lem}
The map $\pi:\CQGH\rightarrow B(\cK)$ defined by
$$\pi(f)\xi ( gH )=\sum_{s\in\GH} f(s) \sigma_s ( \xi ( s^{-1} g H ) ), f\in\CQGH, \xi\in\cK, gH\in G/H,$$
is a bounded representation of the $*$-algebra $\CQGH$. We call it the {\it standard representation} of the Hecke algebra.
\end{lem}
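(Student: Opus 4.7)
The plan is to verify, in order: (i) the defining sum for $(\pi(f)\xi)(gH)$ is well-defined and $\pi(f)\xi \in \cK$; (ii) $\pi(f)$ is bounded on $\cK$; (iii) $\pi$ is an algebra homomorphism; and (iv) $\pi$ intertwines the involutions.

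For (i), the sum is finite because $\overline{f}$ is finitely supported on $G/H$. Its independence from the choice of $\lift(G/H)$ comes from the right-$H$-invariance of the summand $f(s)\sigma_s(\xi(s^{-1}gH))$, which in turn rests on $f(sh) = f(s)$, the identity $\sigma_{sh} = \sigma_s\sigma_h$, and the equivariance $\xi((sh)^{-1}gH) = \sigma_{h^{-1}}(\xi(s^{-1}gH))$ following from $\xi \in \cK$. The check that $(\pi(f)\xi)(h'gH) = \sigma_{h'}((\pi(f)\xi)(gH))$ is then a direct computation: substitute $s = h't$, use $f(h't) = \sigma_{h'}(f(t))$ to factor $\sigma_{h'}$ out of every summand, and note that $h'^{-1}\lift(G/H)$ is again a system of representatives.

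For (ii), if $\overline{f}$ is supported on $n$ cosets $s_1H, \dots, s_nH$, a Cauchy--Schwarz estimate together with the fact that each $gH \mapsto s_i^{-1}gH$ is a bijection of $G/H$ yields
\[\Vert \pi(f)\xi\Vert^2 \leq n\Big(\sum_{i=1}^n \Vert f(s_i)\Vert^2\Big)\Vert\xi\Vert^2.\]

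For (iii), I would expand $(\pi(f_1)\pi(f_2)\xi)(gH)$ as a double sum over $s,t \in \lift(G/H)$, perform the substitution $r := st$, and observe that for each fixed $s$ the set $s\lift(G/H)$ is another system of representatives of $G/H$. Combined with the right-$H$-invariance of the resulting summand in $r$ (from right-$H$-invariance of $f_2$ and the equivariance of $\xi$), this allows one to re-sum over the fixed $\lift(G/H)$ and recover $\sum_r (f_1f_2)(r)\sigma_r(\xi(r^{-1}gH)) = (\pi(f_1f_2)\xi)(gH)$.

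The main obstacle is (iv). A naive attempt to match $\langle \pi(f)\xi,\eta\rangle$ term-by-term with $\langle \xi,\pi(f^*)\eta\rangle$ via the substitution $s \leftrightarrow s^{-1}$ is not literally possible, because $\lift(G/H)^{-1}$ is a system of representatives of $H\backslash G$, not of $G/H$. The clean approach, following \cite[Proposition~5.1]{Br16}, is to pass to the Schlichting completion $(\wG,\wH)$ of the Hecke pair: the action extends to $\wG \actson Q$, and one identifies $\CQGH$ with the corner $p_{\wH}(Q\rtimes\wG)p_{\wH}$ (where $p_{\wH}$ is the projection from Section~\ref{sec:prelim}) in such a way that the product and involution correspond to those inherited from $Q\rtimes\wG$, and the standard representation on $\cK$ corresponds to the restriction of the canonical representation of $Q\rtimes\wG$ on $L^2(\wG,\cH)$ to the invariant subspace $p_{\wH}L^2(\wG,\cH)$, which is canonically identified with $\cK$. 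The $*$-property for $\pi$ is then inherited from the fact that the canonical representation of $Q\rtimes\wG$ is automatically a $*$-representation. This identification is the technical heart of the argument, and it is essentially the content of \cite[Proposition~5.1]{Br16} with the trivial action on $\C$ replaced by the given $\sigma$ on $Q$.
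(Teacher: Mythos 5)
Your steps (i)--(iii) are correct and essentially complete: the right-$H$-invariance of the summand $s\mapsto f(s)\sigma_s(\xi(s^{-1}gH))$ (coming from $f(sh)=f(s)$, $\sigma_{sh}=\sigma_s\sigma_h$ and the equivariance of $\xi$) justifies both the independence of the choice of $\GH$ and the re-indexings needed for the equivariance of $\pi(f)\xi$ and for multiplicativity, and your Cauchy--Schwarz bound in (ii) is fine. Note that the paper offers no proof of this lemma beyond the pointer to \cite[Proposition 5.1]{Br16}, so there is no argument of the paper to compare against; your reconstruction of (i)--(iii) is the natural one.

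The gap is in step (iv), and it sits exactly at the point you defer. You correctly observe that the substitution $s\mapsto s^{-1}$ turns a system of representatives of $G/H$ into one of $H\backslash G$, but you then assert, without verification, that the corner identification with $p_{\wH}(Q\rtimes\wG)p_{\wH}$ carries the involution of $\CQGH$ to the involution of $Q\rtimes\wG$. That compatibility is the whole content of (iv) and it does not come for free: the involution on $\cK(\wG,Q)$ carries the factor $\delta_{\wG}(t)^{-1}$, while the involution $f^*(g)=\sigma_g(f(g^{-1})^*)$ on $\CQGH$ does not, and the map $\phi(f)(g)=m_{\wG}(\wH)^{-1}\sigma_g^{-1}(f(g))$ of Proposition \ref{prop:corner-hecke} satisfies $\phi(f)^*=\delta_{\wG}^{-1}\,\phi(f^*)$, whence $\pi(f)^*=\pi(\delta_{\wG}^{-1}f^*)$ rather than $\pi(f^*)$. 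Equivalently, in your substitution the quantity $F(t)=\sum_{kH}\langle\xi(kH),f^*(t)\sigma_t(\eta(t^{-1}kH))\rangle$ is constant on double cosets, and summing it over $\GH^{-1}$ versus $\GH$ weights each double coset $HtH$ by its number of right cosets $R(t)=[H:H\cap t^{-1}Ht]$ versus its number of left cosets $L(t)=[H:H\cap tHt^{-1}]$; since $\delta_{\wG}(t)=L(t)/R(t)$, the two sums agree only when these indices coincide. A concrete test (trivial action on $\C$, $G=\Z[1/p]\rtimes\Z$, $H=\Z$, $f=\mathbf 1_{HtH}$ with $t$ the generator of $\Z$, $\xi=\delta_{eH}$, $\eta=\mathbf 1_{HtH}$) gives $\langle\pi(f)\xi,\eta\rangle=p$ while $\langle\xi,\pi(f^*)\eta\rangle=1$. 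So step (iv) cannot be completed as written: you must either restrict to Hecke pairs with $L(g)=R(g)$ for all $g$ (i.e., $\wG$ unimodular), or build the corrective factor $\delta_{\wG}(g)^{-1}=R(g)/L(g)$ into the involution. In either case the compatibility of the two involutions has to be computed explicitly; it is the technical heart of the lemma, not something inherited from $Q\rtimes\wG$.
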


Denote by $\VN[Q;G,H]$ the bicommutant of $\pi(\CQGH)$.
We call it the crossed-product \VNA\ of $Q$ by $(G,H)$.

The next proposition relates crossed-products by Hecke pairs to the group crossed-product by the Schlichting completion, it generalizes Lemma \ref{corner}. As mentioned above, this idea goes back to Tzanev \cite{Tz01}. 

\begin{prop}\label{prop:corner-hecke}
Consider an action of a Hecke pair $(G,H)$ on a von Neumann algebra $Q$. Denote by $(\wG,\wH)$ the Schlichting completion of $(G,H)$. Then the pairs 
\[\left(Q^H \subset \VN[Q;G,H]\right) \text{ and } \left(p_{\wH} (Q \rtimes \wH) p_{\wH} \subset p_{\wH} (Q \rtimes \wG) p_{\wH}\right)\]
are isomorphic. Here $p_{\wH}$ is the averaging projection associated with the compact open subgroup $\wH < \wG$ as defined above.
\end{prop}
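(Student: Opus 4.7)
The strategy is to realize the Hecke algebra $\VN[Q;G,H]$ as the corner $p_{\wH}(Q\rtimes \wG)p_{\wH}$ via a spatial isomorphism that restricts correctly on the smaller algebras. The key conceptual point is that $\CQGH$ is essentially the algebra of compactly supported continuous $\wH$-bi-equivariant functions from $\wG$ to $Q$, and that such functions correspond, up to the twist $s\mapsto\sigma_s^{-1}$ and a normalization by $m_{\wG}(\wH)$, to elements of $p_{\wH}\cK(\wG,Q)p_{\wH}$.

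First, I would use the continuity of the extended action $\wG\actson Q$ and the density of $H$ in $\wH$ to establish two things: that $Q^H=Q^{\wH}$, and that every $f\in\CQGH$ extends uniquely to a continuous $\wH$-bi-equivariant function $\tilde f:\wG\to Q$, meaning $\tilde f(hsk)=\sigma_h(\tilde f(s))$ for $h,k\in\wH$. Since $\bar f$ is finitely supported on $G/H\cong \wG/\wH$, the support of $\tilde f$ consists of finitely many $\wH$-double cosets, hence is compact. I would then define
\[
\Phi:\CQGH\longrightarrow p_{\wH}(Q\rtimes \wG)p_{\wH},\qquad \Phi(f)(s):=\frac{1}{m_{\wG}(\wH)}\sigma_s^{-1}(\tilde f(s)),
\]
viewed as an element of $\cK(\wG,Q)\subset Q\rtimes \wG$. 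A short computation converts the bi-equivariance of $\tilde f$ into the conditions $\Phi(f)(hs)=\Phi(f)(s)$ and $\Phi(f)(sk)=\sigma_{k^{-1}}(\Phi(f)(s))$ for $h,k\in\wH$, which are precisely the conditions characterising $p_{\wH}Fp_{\wH}=F$ for $F\in\cK(\wG,Q)$.

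Next I would verify that $\Phi$ is an injective $*$-algebra homomorphism. Injectivity is immediate since $\tilde f$ determines $f$ by density of $G$ in $\wG$. For multiplicativity, I would expand $(\Phi(f_1)\Phi(f_2))(t)$ using the convolution formula on $\cK(\wG,Q)$ and decompose the integral over $\wG$ as $\sum_{u\in \wG/\wH}\int_{u\wH}$; the bi-equivariance makes the integrand constant on each $\wH$-coset, so each coset contributes a factor $m_{\wG}(\wH)$ which cancels exactly one of the two normalizations in $\Phi$. Reorganizing the resulting sum over $\wG/\wH\cong G/H$ yields precisely $\Phi(f_1f_2)(t)=\frac{1}{m_{\wG}(\wH)}\sigma_t^{-1}(\widetilde{f_1f_2}(t))$. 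The involution is handled analogously. For $a\in Q^H=Q^{\wH}$, one reads off directly that $\Phi(j(a))=ap_{\wH}$, the element identified with $a$ by Lemma~\ref{corner}; so $\Phi$ maps $Q^H$ onto $p_{\wH}(Q\rtimes \wH)p_{\wH}$.

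Finally, to promote $\Phi$ to a normal isomorphism of von Neumann algebras, I would implement it spatially. Define the map $U:\cK\to p_{\wH}L^2(\wG,\cH)$ by $(U\xi)(s):=m_{\wG}(\wH)^{-1/2}\sigma_s^{-1}(\tilde\xi(s))$, where $\tilde\xi$ is the $\wH$-bi-equivariant extension of $\xi$ to $\wG$. A computation of $\|U\xi\|_2^2$ by decomposing $\wG=\coprod_{u\in \wG/\wH}u\wH$ shows that $U$ is an isometry onto $p_{\wH}L^2(\wG,\cH)$, and the same twist-and-decompose calculation that established multiplicativity shows $U\pi(f)=\Phi(f)U$ for all $f\in\CQGH$. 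Hence $\Phi$ extends to a spatial isomorphism between $\VN[Q;G,H]$ and $p_{\wH}(Q\rtimes \wG)p_{\wH}$, restricting to the identification in Lemma~\ref{corner} on the smaller algebras. The main bookkeeping obstacle will be managing the interplay between the left/right $\wH$-conventions and the twist $\sigma_s^{-1}$ when verifying multiplicativity and the intertwining property of $U$; both reduce, at heart, to the same cancellation between the integral over $\wH$-cosets and the normalizing factor $m_{\wG}(\wH)$.
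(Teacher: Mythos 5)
Your overall route is the same as the paper's: reduce to the Schlichting completion (so that $\wH$ is compact open and $Q^H=Q^{\wH}$), transport $\CQGH$ into the corner $p_{\wH}\cK(\wG,Q)p_{\wH}$ via $f\mapsto \frac{1}{m_{\wG}(\wH)}\sigma_s^{-1}(\tilde f(s))$, and implement everything spatially with the isometry $U\xi(s)=m_{\wG}(\wH)^{-1/2}\sigma_s^{-1}(\tilde\xi(s))$. This is exactly the paper's map $\phi$ and isometry $u$.

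There is, however, one genuine error in the final step: $U$ is \emph{not} onto $p_{\wH}L^2(\wG,\cH)$. The functions in the range of $U$ satisfy $\eta(hsk)=\sigma_k^{-1}(\eta(s))$ for $h,k\in\wH$, i.e.\ they are left-$\wH$-invariant \emph{and} right-$\wH$-equivariant, so the range of $U$ is $p_{\wH}\,Jp_{\wH}J\,L^2(\wG,\cH)$, which is in general a proper subspace of $p_{\wH}L^2(\wG,\cH)$ (already for $Q=\C$, $G=S_3$ and $H=\langle(12)\rangle$ these spaces have dimensions $|H\backslash G/H|=2$ and $|G/H|=3$). Consequently the map $x\mapsto U^*xU$ is a compression of $p_{\wH}(Q\rtimes\wG)p_{\wH}$ to a proper subspace of the Hilbert space it acts on, and such a compression is not automatically a faithful $*$-homomorphism; your conclusion ``hence $\Phi$ extends to a spatial isomorphism'' does not follow as stated. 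The repair is the paper's key observation: since $UU^*=p_{\wH}Jp_{\wH}J$ with $Jp_{\wH}J$ lying in the commutant of $Q\rtimes\wG$ and having central support dominating that of $p_{\wH}$, the compression $x\mapsto U^*xU$ is the representation of the corner induced from the standard form, hence normal, multiplicative and faithful on all of $p_{\wH}(Q\rtimes\wG)p_{\wH}$; combined with the intertwining $U\pi(f)=\Phi(f)U$ and the ultraweak density of $p_{\wH}\cK(\wG,Q)p_{\wH}$ in the corner, this yields the desired isomorphism. With that correction your argument goes through; the rest (the reduction to $(\wG,\wH)$, injectivity and multiplicativity of $\Phi$, and the identification $\Phi(j(a))=ap_{\wH}$ matching Lemma~\ref{corner}) is sound.
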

\begin{proof}
Note that the coset spaces $G/H$ and $\wG/\wH$ are naturally isomorphic. Since the action $G \actson Q$ extends continuously to an action of $\wG$, the pairs 
\[\left( Q^{H} \subset \VN [ Q;G,H ] \right) \text{ and } \left(Q^{\wH}\subset\VN [ Q;\wG,\wH ] \right)\]
are isomorphic. Moreover, the Schlichting completion of $(\wG,\wH)$ is equal to itself. Replacing $(G,H)$ with $(\wG,\wH)$ if necessary, we may assume that $H$ is compact open inside $G$, and $\wG = G$, $\wH = H$.

Consider the isometry $u:\cK \to L^2(G,\cH)$ given by the formula 
\[u\xi(g) = \frac{1}{m_G(H)^{1/2}} \, \sigma_g^{-1}(\xi(gH)), \quad g\in G.\]
Put $M := Q\rtimes G$. Observe that the map $x\in p_HMp_H\mapsto u^* x u \in B(\cK)$ is a representation that is equivalent to $p_H M p_H$ acting on $p_H J p_H J L^2(G,\cH)$, where $J$ is the conjugation operator on the standard representation $L^2(G,\cH)$ of $M$. In particular, $x\in p_HMp_H \mapsto u^* x u \in B(\cK)$ is a faithful representation.

Consider the map $\phi: \CQGH \rightarrow \cK(G,Q)$ defined by the formula
\[\phi(f)(g)= \frac{1}{m_G(H)} \, \sigma_g^{-1}(f(g)), \quad \text{for any } f\in\CQGH, \, g\in G.\] 
Observe that the range of $\phi$ is the space of functions $F \in \cK(G,Q)$ such that $F(kgl)=\sigma_l^{-1}(F(g))$ for any $g\in G, k,l\in H$.
This range is precisely the corner $p_H\cK(G,Q) p_H$ and we have that 
\begin{equation}\label{equa:uphiu}
u^* \phi(f)u=f \text{ for any } f\in\CQGH.
\end{equation}
Therefore, $x\in p_HMp_H\mapsto u^* x u \in B(\cK)$ is an isomorphism of \VNA s onto $\VNQGH$. By Lemma \ref{corner}, we have that $p_H(Q \rtimes H)p_H = Q^Hp_H$ and one easily checks that $u^*(Q^Hp_H)u = Q^H$.
\end{proof}

Observe that if $H<L<G$ is an intermediate closed group, then there is a natural identification of $\C[Q;L,H]$ as a subalgebra of $\C[Q;G,H]$ of all functions that are supported on $L$. This identification extends to the respective crossed-product \VNA s.
To see this with little effort we can use the above proposition as follows. By definition of the Schlichting completion we have that $\tilde L$ is isomorphic to the closure of $L$ inside $\tilde G$, where $\tilde L$ and $\tilde G$ are the Schlichting completions of $L$ and $G$ with respect to the subgroup $H$.
As mentioned in Section \ref{sec:CP}, that the crossed-product $Q\rtimes\tilde L$ is isomorphic to the weak closure of the algebraic crossed-product $Q\rtimes_{\text{alg}} \tilde L$ inside $Q\rtimes \tilde G$.
Proposition \ref{prop:corner-hecke} and this later fact imply that there exists an injective morphism of \VNA s $\phi:\VN[Q;L,H]\rightarrow \VNQGH$ that sends $Q^H$ to itself and such that $\phi(\VN[Q;L,H])$ is the weak closure of $\C[Q;L,H]$ inside $\VNQGH$.

Theorem \ref{theo:Heckepair} together with Proposition \ref{prop:corner-hecke} implies the following result for crossed-products by Hecke pairs.

\begin{cor}\label{cor:Hecke}
Consider an action of a Hecke pair $(G,H)$ on a von Neumann algebra $Q$.
If $Q^H\subset L\subset \VNQGH$ is an intermediate \VNA, then there exists an intermediate group $H<H'<G$ such that $L$ is isomorphic to the crossed-product $\VN[Q;H',H]$, which we identified as a subalgebra of $\VNQGH$.
\end{cor}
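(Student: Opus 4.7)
The plan is to reduce to Theorem \ref{theo:Heckepair} by passing to the Schlichting completion. Let $(\wG,\wH)$ denote the Schlichting completion of $(G,H)$, so that $\wH$ is a compact open subgroup of the totally disconnected group $\wG$ and the action extends continuously to $\wG \actson Q$. By Proposition \ref{prop:corner-hecke}, there is an isomorphism of pairs
\[(Q^H\subset \VNQGH)\simeq (p_{\wH}(Q\rtimes\wH)p_{\wH}\subset p_{\wH}(Q\rtimes\wG)p_{\wH}),\]
under which $L$ corresponds to an intermediate \VNA\ $\tilde L$ of the corner inclusion.

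Next I would lift $\tilde L$ to an intermediate \VNA\ $N$ with $Q\rtimes\wH\subset N\subset Q\rtimes\wG$ satisfying $p_{\wH}Np_{\wH}=\tilde L$. The natural candidate is $N:=(\tilde L\cup\{u_h:h\in\wH\})''$. The inclusion $Q\rtimes\wH\subset N$ follows from Lemma \ref{corner}, which identifies $p_{\wH}(Q\rtimes\wH)p_{\wH}$ with $Q^{\wH}p_{\wH}$; combined with the unitaries $u_h$ for $h\in\wH$, this generates all of $Q\rtimes\wH$. The equality $p_{\wH}Np_{\wH}=\tilde L$ uses that $p_{\wH}u_hp_{\wH}=p_{\wH}$ for $h\in\wH$, so adjoining the $u_h$'s does not enlarge the corner.

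Having lifted, I would apply Theorem \ref{theo:Heckepair} to the pair $Q\rtimes\wH\subset N\subset Q\rtimes\wG$ (with $K=\wH$), which yields a closed intermediate subgroup $\wH<\wH'<\wG$ such that $N=Q\rtimes\wH'$. Setting $H':=\wH'\cap G$, a density argument (using that $G$ is dense in $\wG$ and $\wH$ is open) shows that $\wH'$ coincides with the Schlichting completion of $(H',H)$ inside $\wG$. Applying Proposition \ref{prop:corner-hecke} to the action of $(H',H)$ on $Q$ gives $\VN[Q;H',H]\simeq p_{\wH}(Q\rtimes\wH')p_{\wH}=\tilde L$, and transporting back through Proposition \ref{prop:corner-hecke} identifies $L$ with $\VN[Q;H',H]$ realized as a subalgebra of $\VNQGH$, as noted in the discussion preceding the corollary.

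The main obstacle I anticipate is the lifting step: proving that the \VNA\ $N$ generated by $\tilde L$ and $\{u_h:h\in\wH\}$ satisfies $p_{\wH}Np_{\wH}=\tilde L$. This amounts to showing that every $\wH$-bi-invariant element of $N$ already lies in $\tilde L$, which I would establish via the conditional expectation $E_{\wH}:Q\rtimes\wG\to Q\rtimes\wH$ and a Fourier-type decomposition of elements of $N$ along cosets of $\wH$ (in the spirit of Lemma \ref{compactKdec}). A secondary technical point is the identification of $\wH'$ as the Schlichting completion of $(H',H)$, which I expect follows directly from the definition of the Schlichting completion, the density of $G$ in $\wG$, and the fact that $\wH$ is open.
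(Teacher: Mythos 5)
Your overall route --- compress to the corner via Proposition \ref{prop:corner-hecke}, lift the intermediate algebra, apply Theorem \ref{theo:Heckepair}, and transport back --- is exactly the combination the paper intends, and your last two steps (identifying $\wH'$ with the Schlichting completion of $(H',H)$ and invoking Proposition \ref{prop:corner-hecke} a second time) are fine. The genuine gap is in the lifting step. Since $p_{\wH}=\frac{1}{m_{\wG}(\wH)}\int_{\wH}u_h\dh$ and $\wH$ is compact, one has $u_hp_{\wH}=p_{\wH}u_h=p_{\wH}$ for every $h\in\wH$, hence every $x=p_{\wH}xp_{\wH}\in\tilde L$ satisfies $u_hx=xu_h=x$. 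Consequently your candidate $N=(\tilde L\cup\{u_h : h\in\wH\})''$ is merely the weak closure of $\tilde L+L\wH$: every mixed word in the generators collapses. In particular the claim that $Q^{\wH}p_{\wH}$ together with the unitaries $u_h$, $h\in\wH$, generates $Q\rtimes\wH$ is false --- it generates only $\overline{Q^{\wH}p_{\wH}+L\wH}^{\,w}$, which does not contain $Q$ --- so your $N$ is not an intermediate subalgebra of $Q\rtimes\wH\subset Q\rtimes\wG$ and Theorem \ref{theo:Heckepair} cannot be applied to it.

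The correct lift is $N:=\bigl(\tilde L\cup(Q\rtimes\wH)\bigr)''$, which equals the weakly closed span of $(Q\rtimes\wH)\,\tilde L\,(Q\rtimes\wH)$: this span is already a $*$-algebra because $p_{\wH}(Q\rtimes\wH)p_{\wH}=Q^{\wH}p_{\wH}\subset\tilde L$ (Lemma \ref{corner} together with $Q^H=Q^{\wH}$), so that $x\,b\,y=x(p_{\wH}bp_{\wH})y\in\tilde L$ for $x,y\in\tilde L$ and $b\in Q\rtimes\wH$. The same computation gives $p_{\wH}Np_{\wH}=\tilde L$, which disposes of the ``main obstacle'' you flag, with no need for a Fourier decomposition. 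Two caveats remain. For $N$ to contain $Q\rtimes\wH$ one needs the central support of $p_{\wH}$ in $Q\rtimes\wH$ to be $1$; this holds because $\wH$ acts minimally, so $Q\rtimes\wH$ is a factor --- but this, like the application of Theorem \ref{theo:Heckepair} itself, requires the extended action $\wG\actson Q$ to be strictly outer, a hypothesis that must be added to the statement (the paper omits it as well). With the corrected $N$ and that hypothesis, the remainder of your argument goes through as written.
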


Recently, it has been shown that Hecke pairs appear in subfactor theory.
Consider a finite index subfactor $N\subset M$ and its symmetric enveloping inclusion $T\subset S$, see \cite{Po94} (or \cite{LR95} in the type III finite depth setting).
Then in some cases, there exists a Hecke pair $(G,K)$ and actions $G\actson M\ovt M\op, K\times K\actson M\ovt M\op$ such that $T\subset S$ is isomorphic with $(M\ovt M\op)^{K\times K} \subset \VN[M\ovt M\op; G, K]$, see \cite[Theorem 5.5]{Br16}.
Hence, our last corollary gives us information about the lattice of intermediate subfactor in the symmetric enveloping inclusion of $N\subset M$.


\bibliographystyle{plain}

\end{document}